\date{}
\begin{document}

\centerline{}

\centerline {\Large{\bf Representation of Uniform Boundedness Principle and  }}
\centerline{\Large{\bf Hahn-Banach Theorem in linear\;$n$-normed space }}
\newcommand{\mvec}[1]{\mbox{\bfseries\itshape #1}}
\centerline{}
\centerline{\textbf{Prasenjit Ghosh}}
\centerline{Department of Pure Mathematics, University of Calcutta,}
\centerline{35, Ballygunge Circular Road, Kolkata, 700019, West Bengal, India}
\centerline{e-mail: prasenjitpuremath@gmail.com}
\centerline{}
\centerline{\textbf{T. K. Samanta}}
\centerline{Department of Mathematics, Uluberia College,}
\centerline{Uluberia, Howrah, 711315,  West Bengal, India}
\centerline{e-mail: mumpu$_{-}$tapas5@yahoo.co.in}

\newtheorem{Theorem}{\quad Theorem}[section]

\newtheorem{definition}[Theorem]{\quad Definition}

\newtheorem{theorem}[Theorem]{\quad Theorem}

\newtheorem{remark}[Theorem]{\quad Remark}

\newtheorem{corollary}[Theorem]{\quad Corollary}

\newtheorem{note}[Theorem]{\quad Note}

\newtheorem{lemma}[Theorem]{\quad Lemma}

\newtheorem{example}[Theorem]{\quad Example}

\newtheorem{result}[Theorem]{\quad Result}
\newtheorem{conclusion}[Theorem]{\quad Conclusion}

\newtheorem{proposition}[Theorem]{\quad Proposition}

\begin{abstract}
\textbf{\emph{The concept of b-linear functional and its different types of continuity in linear n-normed space are presented and some of their properties are being established.\;We derive the Uniform Boundedness Principle and Hahn-Banach extension Theorem with the help of bounded b-linear functionals in the case of linear n-normed spaces and discuss some examples and applications.\,Finally, we present the concept of weak\,*\,convergence for the sequence of bounded b-linear functionals in linear n-normed space.}}
\end{abstract}
{\bf Keywords:}  \emph{Closed linear operator,\;Hahn-Banach extension Theorem,\;Uniform-\\ \smallskip\hspace{2.5cm}Boundedness Principle,\;linear n-normed space,\;n-Banach space.}\\

{\bf 2020 Mathematics Subject Classification:} 46A70,\;46B20,\;46B25,\;41A65.
\\
\\
\\

\section{Introduction}

\smallskip\hspace{.6 cm}  The Uniform Boundedness Principle, also known as Banach-Steinhaus theorem, is one of the fundamental results in functional analysis which was obtained by S. Banach and H. Steinhaus in \,$1927$.\;This theorem establishes that for a family of continuous linear operators whose domain is a Banach space, pointwise boundedness is equivalent to uniform boundedness in operator norm.

The Hahn-Banach theorem is another useful and determining tool not only in functional analysis but also in other branches of mathematics viz., algebra, geometry, optimization, partial differential equation etc.\;In fact, this theorem establishes extension of bounded linear functionals defined on a subspace to the entire space.

The notion of linear\;$2$-normed space was introduced by S.\,Gahler \cite{Gahler}.\;A survey of the theory of linear\;$2$-normed space can be found in \cite{Freese}.\;The concept of \,$2$-Banach space is briefly discussed in \cite{White}.\;H.\,Gunawan and Mashadi \cite{Mashadi} developed the generalization of a linear $2$-normed space for \,$n \,\geq\, 2$.

In this paper, we will see that in the Cartesian product \,$ X \,\times\, Y $, an induced \,$n$-norm can be defined using the \,$n$-norms of \,$ X $\, and \,$ Y $.\;The concept of \,$b$-linear functional and its different types of continuity in the case of linear\;$n$-normed space are discussed and then some results related to such types of continuity are going to be established.\;The Uniform Boundedness Principle and Hahn-Banach Extension Theorem for a bounded \,$b$-linear functional defined on a \,$n$-Banach space will be established.\,We also give some applications of  Uniform Boundedness Principle and Hahn-Banach Extension Theorem for a bounded \,$b$-linear functional.\;Moreover, the notion of weak\,*\,convergence of the sequence of bounded \,$b$-linear functional in linear\;$n$-normed spaces is introduced and characterized.

\section{Preliminaries}

\begin{definition}\cite{Mashadi}
A \,$n$-norm on a linear space \,$X$\, (\,over the field \,$\mathbb{K}$\, of real or complex numbers\,) is a function
\[\left(\,x_{\,1} \,,\, x_{\,2} \,,\, \cdots \,,\, x_{\,n}\,\right) \,\longmapsto\, \left\|\,x_{\,1} \,,\, x_{\,2} \,,\, \cdots \,,\, x_{\,n}\,\right\|,\; x_{\,1},\, x_{\,2},\, \cdots,\, x_{\,n} \,\in\, X,\]from \,$X^{\,n}$\, to the set \,$\mathbb{R}$\, of all real numbers such that for every \,$x_{\,1},\, x_{\,2},\, \cdots,\, x_{\,n} \,\in\, X$\, and \,$\alpha \,\in\, \mathbb{K}$,
\begin{itemize}
\item[(I)]\;\; $\left\|\,x_{\,1} \,,\, x_{\,2} \,,\, \cdots \,,\, x_{\,n}\,\right\| \,=\, 0$\; if and only if \,$x_{\,1},\, \cdots,\, x_{\,n}$\; are linearly dependent,
\item[(II)]\;\;\; $\left\|\,x_{\,1} \,,\, x_{\,2} \,,\, \cdots \,,\, x_{\,n}\,\right\|$\; is invariant under permutations of \,$x_{\,1},\, x_{\,2},\, \cdots,\, x_{\,n}$,
\item[(III)]\;\;\; $\left\|\,\alpha\,x_{\,1} \,,\, x_{\,2} \,,\, \cdots \,,\, x_{\,n}\,\right\| \,=\, |\,\alpha\,|\, \left\|\,x_{\,1} \,,\, x_{\,2} \,,\, \cdots \,,\, x_{\,n}\,\right\|$,
\item[(IV)]\;\; $\left\|\,x \,+\, y \,,\, x_{\,2} \,,\, \cdots \,,\, x_{\,n}\,\right\| \,\leq\, \left\|\,x \,,\, x_{\,2} \,,\, \cdots \,,\, x_{\,n}\,\right\| \,+\,  \left\|\,y \,,\, x_{\,2} \,,\, \cdots \,,\, x_{\,n}\,\right\|$\,.
\end{itemize}
A linear space \,$X$, together with a n-norm \,$\left\|\,\cdot \,,\, \cdots \,,\, \cdot \,\right\|$, is called a linear n-normed space. 
\end{definition}

Throughout this paper, \,$X$\, will denote linear\;$n$-normed space over the field \,$\mathbb{K}$\, associated with the $n$-norm \,$\|\,\cdot \,,\, \cdots \,,\, \cdot\,\|$.

\begin{definition}\cite{Mashadi}
A sequence \,$\{\,x_{\,k}\,\} \,\subseteq\, X$\, is said to converge to \,$x \,\in\, X$\; if 
\[\lim\limits_{k \to \infty}\,\left\|\,x_{\,k} \,-\, x \,,\, x_{\,2} \,,\, \cdots \,,\, x_{\,n} \,\right\| \,=\, 0\; \;\forall\; x_{\,2},\, \cdots,\, x_{\,n} \,\in\, X\]
and it is called a Cauchy sequence if 
\[\lim\limits_{l \,,\, k \to \infty}\,\left\|\,x_{\,l} \,-\, x_{\,k} \,,\, x_{\,2} \,,\, \cdots \,,\, x_{\,n}\,\right\| \,=\, 0\; \;\forall\; x_{\,2},\, \cdots,\, x_{\,n} \,\in\, X.\]
The space \,$X$\, is said to be complete or n-Banach space if every Cauchy sequence in this space is convergent in \,$X$.
\end{definition}

\begin{definition}\cite{Soenjaya}
Define the following open and closed ball in \,$X$:
\[B_{\,\{\,e_{\,2} \,,\, \cdots \,,\, e_{\,n}\,\}}\,(\,a \,,\, \delta\,) \,=\, \left\{\,x \,\in\, X \,:\, \left\|\,x \,-\, a \,,\, e_{\,2} \,,\, \cdots \,,\, e_{\,n}\,\right\| \,<\, \delta \,\right\}, \;\&\]
\[B_{\,\{\,e_{\,2} \,,\, \cdots \,,\, e_{\,n}\,\}}\,[\,a \,,\, \delta\,] \,=\, \left\{\,x \,\in\, X \,:\, \left\|\,x \,-\, a \,,\, e_{\,2} \,,\, \cdots \,,\, e_{\,n}\,\right\| \,\leq\, \delta\,\right\}\hspace{.5cm}\]
where \,$a,\, e_{\,2},\, \cdots,\, e_{\,n} \,\in\, X$\, and \,$\delta \,>\, 0$.
\end{definition}

\begin{definition}\cite{Soenjaya}
Let \,$ A \,\subseteq\, X$.\;Then a point \,$ a \,\in\, A $\, is said to be an interior point of \,$A$\, if there exist \,$e_{\,2},\, \cdots,\, e_{\,n} \,\in\, X$\; and \;$\delta \;>\; 0$\; such that \;$B_{\,\{\,e_{\,2} \,,\, \cdots \,,\, e_{\,n}\,\}}\,(\,a \,,\, \delta\,) \,\subseteq\, A$.\,The set \,$A$\, is said to be an open if every points of \,$A$\, is an interior point of \,$A$.
\end{definition}

\begin{definition}\cite{Soenjaya}
Let \,$ A \,\subseteq\, X$.\;Then the closure of \,$A$\, is defined as 
\[\overline{A} \,=\, \left\{\, x \,\in\, X \;|\; \,\exists\, \;\{\,x_{\,k}\,\} \,\in\, A \;\;\textit{with}\;  \lim\limits_{k \,\to\, \infty} x_{\,k} \,=\, x \,\right\}.\]
The set \,$ A $\, is said to be closed if $ A \,=\, \overline{A}$. 
\end{definition}

The following definition and theorems are extended from \cite{Pilakkat,Riyas}.
 
\begin{theorem}\cite{Pilakkat}\label{th1}
Let \,$X$\, be a n-Banach space.\;Then the intersection of a countable number of dense open subsets of \,$X$\, is dense in \,$X$.
\end{theorem}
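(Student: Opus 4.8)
The plan is to adapt the classical Baire category argument to the ball structure of the space, with the \,$n$-Banach (completeness) hypothesis playing the role that completeness of a metric plays in the usual proof. Write the given sets as \,$\{\,G_{\,k}\,\}_{\,k \,=\, 1}^{\,\infty}$, each dense and open. Since a subset is dense precisely when it meets every nonempty open set, and every open set is a union of basic balls of the form \,$B_{\{e_{2},\ldots,e_{n}\}}(a,\delta)$, it suffices to fix one nonempty basic open ball \,$B_{\,0}$\, and show that \,$B_{\,0} \,\cap\, \bigcap_{\,k} G_{\,k} \,\neq\, \emptyset$.

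First I would build a nested sequence of closed balls. Because \,$G_{\,1}$\, is dense and open, \,$G_{\,1} \,\cap\, B_{\,0}$\, is nonempty and open, hence contains a basic open ball, inside which sits a closed ball \,$B_{\,1} \,=\, B_{\{e_{2},\ldots,e_{n}\}}[x_{\,1},r_{\,1}]$\, with \,$0 \,<\, r_{\,1} \,<\, 1$\, (one shrinks the radius, since a closed ball of half the radius lies inside the open ball). Proceeding inductively, given \,$B_{\,k-1}$, I use density and openness of \,$G_{\,k}$\, to locate inside the interior of \,$B_{\,k-1} \,\cap\, G_{\,k}$\, a closed ball \,$B_{\,k} \,=\, B_{\{e_{2},\ldots,e_{n}\}}[x_{\,k},r_{\,k}]$\, with \,$B_{\,k} \,\subseteq\, B_{\,k-1}$\, and \,$0 \,<\, r_{\,k} \,<\, r_{\,k-1}/2$, so that \,$r_{\,k} \,<\, 2^{\,-k}$\, and \,$B_{\,k} \,\subseteq\, G_{\,k}$.

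Next I would show that the sequence of centers \,$\{\,x_{\,k}\,\}$\, is Cauchy in the sense introduced earlier and invoke the \,$n$-Banach property to produce a limit \,$x \,\in\, X$. Since \,$B_{\,m} \,\subseteq\, B_{\,k}$\, for \,$m \,\geq\, k$, the tail of the center sequence lies in \,$B_{\,k}$, so the relevant \,$n$-norm of \,$x_{\,m} \,-\, x_{\,k}$\, is controlled by \,$r_{\,k} \,\to\, 0$; passing the closed-ball containment to the limit then places \,$x$\, in every \,$B_{\,k}$, hence in every \,$G_{\,k}$\, and in \,$B_{\,0}$, which finishes the argument.

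The hard part will be this final step, and it is exactly where the \,$n$-normed setting departs from the metric case: the ball \,$B_{\{e_{2},\ldots,e_{n}\}}[x_{\,k},r_{\,k}]$\, controls only the single quantity \,$\|\,\cdot \,-\, x_{\,k} \,,\, e_{\,2} \,,\, \ldots \,,\, e_{\,n}\,\|$, whereas both the Cauchy condition and the notion of convergence defined above demand control of \,$\|\,\cdot \,,\, y_{\,2} \,,\, \ldots \,,\, y_{\,n}\,\|$\, for \emph{every} choice of \,$y_{\,2},\, \ldots,\, y_{\,n} \,\in\, X$. The delicate point is therefore the bookkeeping of the auxiliary vectors that define each ball along the induction: I would either maintain a common frame \,$e_{\,2},\, \ldots,\, e_{\,n}$\, throughout (when successive open sets admit balls in a shared direction) or, in general, upgrade the one-directional radius estimate coming from ball containment to the all-directional estimate required for Cauchyness and convergence. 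This upgrade is the step that must lean essentially on the completeness hypothesis, rather than on the nested-ball geometry alone.
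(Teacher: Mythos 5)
The paper itself offers no proof of this statement: it is quoted directly from the reference [Pilakkat] (``The following definition and theorems are extended from...''), so there is no in-paper argument to compare yours against. Judged on its own terms, your outline follows the classical Baire nested-ball scheme, which is surely also the strategy of the cited source; but as written it contains a genuine gap, and it sits exactly where you yourself point: the passage from the one-directional estimates supplied by the balls to the all-directional estimates demanded by the definitions of Cauchy sequence and convergence in this paper.

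Concretely, membership of \,$x_{\,m}$\, in \,$B_{\,\{\,e_{\,2},\, \ldots,\, e_{\,n}\,\}}[\,x_{\,k},\, r_{\,k}\,]$\, controls only \,$\left\|\,x_{\,m} \,-\, x_{\,k},\, e_{\,2},\, \ldots,\, e_{\,n}\,\right\|$\, for the particular frame attached to that ball. This quantity can be zero --- for instance when \,$x_{\,m} \,-\, x_{\,k}$\, lies in the span of \,$e_{\,2},\, \ldots,\, e_{\,n}$\, --- while \,$\left\|\,x_{\,m} \,-\, x_{\,k},\, y_{\,2},\, \ldots,\, y_{\,n}\,\right\|$\, is arbitrarily large for other choices of \,$y_{\,2},\, \ldots,\, y_{\,n}$. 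So the shrinking radii \,$r_{\,k} \,<\, 2^{\,-k}$\, do not make the centers a Cauchy sequence in the sense of Definition 2.2, and the \,$n$-Banach hypothesis never gets to act. Your closing remark that the ``upgrade \ldots must lean essentially on the completeness hypothesis'' cannot work: completeness converts Cauchy sequences into convergent ones, but it is powerless to certify that a sequence is Cauchy in the first place; that certification has to come from the geometry of the construction, which is precisely what is missing. A secondary issue of the same kind: the set \,$B_{\,\{\,e_{\,2},\, \ldots,\, e_{\,n}\,\}}[\,x_{\,k},\, r_{\,k}\,]$\, need not be closed in the sequential sense of Definition 2.5 (a sequence can converge to a point outside it while the single functional \,$\left\|\,\cdot \,-\, x_{\,k},\, e_{\,2},\, \ldots,\, e_{\,n}\,\right\|$\, fails to detect this only if one already knows convergence in every frame), so ``passing the closed-ball containment to the limit'' also needs justification. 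To repair the argument one must either restrict to a setting where a single frame generates the topology, or extract from each open set a whole family of balls over enough frames to dominate every \,$\left\|\,\cdot,\, y_{\,2},\, \ldots,\, y_{\,n}\,\right\|$; neither step is routine, and the proposal does not supply it.
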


\begin{definition}\cite{Pilakkat}
Let \,$X$\, and \,$Y$\, be two linear n-normed spaces over the field \,$\mathbb{K}$.\;Then a linear operator \;$ T  \,:\, X \,\to\, Y$\; is said to be closed if for every \,$\{\,x_{\,k}\,\}$\, in \,$X$\, with \,$ x_{\,k} \,\to\, x $\, in \,$X$\, and \,$T\,(\,x_{\,k}\,) \,\to\, y$\; in \,$Y$, then \,$ T\,(\,x\,) \,=\, y$.
\end{definition}

\begin{theorem}\label{thm2}\cite{Riyas}
Let \,$X$\, and \,$Y$\, be two linear n-normed spaces over \,$\mathbb{K}$.\;If \,$X$\, is finite dimensional, then every linear operator \,$T \,:\, X \,\to\, Y$\; is sequentially continuous.
\end{theorem}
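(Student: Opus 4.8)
The plan is to reduce the statement, via linearity, to showing that $z_{\,k} \,\to\, 0$ in $X$ forces $T\,z_{\,k} \,\to\, 0$ in $Y$; indeed, if $x_{\,k} \,\to\, x$ then $z_{\,k} \,:=\, x_{\,k} \,-\, x \,\to\, 0$ and $T\,x_{\,k} \,-\, T\,x \,=\, T\,z_{\,k}$ by linearity, so $T\,x_{\,k} \,\to\, T\,x$ is equivalent to $T\,z_{\,k} \,\to\, 0$. Fix a basis $\{\,e_{\,1} \,,\, \cdots \,,\, e_{\,m}\,\}$ of $X$ with $m \,=\, \dim X$ (we may assume $m \,\geq\, n$, since otherwise every $n$-norm on $X$ vanishes identically and the convergence hypothesis degenerates) and expand $z_{\,k} \,=\, \sum_{i=1}^{m} \gamma_{\,i}^{\,(k)}\, e_{\,i}$. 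Then $T\,z_{\,k} \,=\, \sum_{i=1}^{m} \gamma_{\,i}^{\,(k)}\, T\,e_{\,i}$, so for any $y_{\,2},\, \cdots,\, y_{\,n} \,\in\, Y$ the homogeneity and triangle inequality (III)--(IV) give
\[\left\|\,T\,z_{\,k} \,,\, y_{\,2} \,,\, \cdots \,,\, y_{\,n}\,\right\| \,\leq\, \sum_{i=1}^{m} \left|\,\gamma_{\,i}^{\,(k)}\,\right|\, \left\|\,T\,e_{\,i} \,,\, y_{\,2} \,,\, \cdots \,,\, y_{\,n}\,\right\|.\]
Since the factors $\|\,T\,e_{\,i} \,,\, y_{\,2} \,,\, \cdots \,,\, y_{\,n}\,\|$ are fixed finite constants, it suffices to prove that each coordinate $\gamma_{\,i}^{\,(k)} \,\to\, 0$ as $k \,\to\, \infty$.

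The heart of the argument is therefore the lemma that $n$-norm convergence $z_{\,k} \,\to\, 0$ implies coordinatewise convergence. To obtain it I would manufacture an ordinary norm on $X$ out of the $n$-norm by setting
\[N\,(x) \,=\, \sum_{1 \,\leq\, i_{\,1} \,<\, \cdots \,<\, i_{\,n-1} \,\leq\, m} \left\|\,x \,,\, e_{\,i_{\,1}} \,,\, \cdots \,,\, e_{\,i_{\,n-1}}\,\right\|.\]
Axioms (III) and (IV) make $N$ positively homogeneous and subadditive, while $N\,(x) \,=\, 0$ forces, by (I), that $x$ is linearly dependent with every $(n-1)$-element subset of the basis, i.e. $x \,\in\, \bigcap_{S} \mathrm{span}\{\,e_{\,i} \,:\, i \,\in\, S\,\}$, the intersection being $\{0\}$ precisely because $m \,\geq\, n$ (for each index $j$ one can pick an $(n-1)$-subset $S$ with $j \,\notin\, S$, which kills the $j$-th coordinate of $x$). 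Hence $N$ is a genuine norm on $X$.

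Next, taking $x_{\,2},\, \cdots,\, x_{\,n}$ to be the basis vectors $e_{\,i_{\,1}},\, \cdots,\, e_{\,i_{\,n-1}}$ in the definition of convergence shows that every summand of $N\,(z_{\,k})$ tends to $0$, so $N\,(z_{\,k}) \,\to\, 0$. Because $X$ is finite-dimensional, $N$ is equivalent to the coordinate norm $\sum_{i} \gamma_{\,i}\, e_{\,i} \,\mapsto\, \max_{i} |\gamma_{\,i}|$ by the classical equivalence of all norms on a finite-dimensional space (which, to stay self-contained, may instead be re-derived here via a Bolzano--Weierstrass compactness argument on the coordinate unit sphere). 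Consequently $N\,(z_{\,k}) \,\to\, 0$ yields $\gamma_{\,i}^{\,(k)} \,\to\, 0$ for each $i$, and feeding this into the displayed estimate gives $\|\,T\,z_{\,k} \,,\, y_{\,2} \,,\, \cdots \,,\, y_{\,n}\,\| \,\to\, 0$ for all $y_{\,2},\, \cdots,\, y_{\,n}$, that is $T\,z_{\,k} \,\to\, 0$.

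The main obstacle I anticipate is exactly this passage from $n$-norm convergence to coordinate convergence: unlike the ordinary normed case, a single choice of the auxiliary slots $x_{\,2},\, \cdots,\, x_{\,n}$ cannot isolate one coordinate when $m \,>\, n$, since no $(n-1)$-dimensional span can contain all the remaining basis directions. Packaging finitely many of the seminorms $x \,\mapsto\, \|\,x \,,\, e_{\,i_{\,1}} \,,\, \cdots \,,\, e_{\,i_{\,n-1}}\,\|$ into the single norm $N$ circumvents this, but the verification that $N$ is definite genuinely relies on $m \,\geq\, n$, which is where finite-dimensionality and the non-degeneracy of the $n$-norm interact. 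Everything else---the reduction to $z_{\,k} \,\to\, 0$, the triangle-inequality estimate, and the final passage to the limit---is routine.
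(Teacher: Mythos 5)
Your proof is correct, and it supplies something the paper itself does not: Theorem \ref{thm2} appears only as a citation to \cite{Riyas} with no proof given, and the only place the paper later touches its content is the finite-dimensional continuity theorem for $b$-linear functionals, where it simply asserts that the coordinates satisfy $a_{\,k,\,j} \,\to\, a_{\,j}$. That coordinate-convergence claim is exactly the key lemma you prove, and your device for proving it --- summing the seminorms $x \,\mapsto\, \left\|\,x,\, e_{\,i_{\,1}},\, \cdots,\, e_{\,i_{\,n-1}}\,\right\|$ over all $(n-1)$-subsets of a basis to manufacture a genuine norm $N$, then invoking equivalence of norms in finite dimensions --- is a clean and complete route; your verification that $N$ is definite correctly isolates where $m \,\geq\, n$ enters. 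The one quibble is the parenthetical dismissal of the case $m \,<\, n$: there the $n$-norm on $X$ is identically zero, so \emph{every} sequence converges to \emph{every} point, and the conclusion can actually fail (take $x_{\,k} \,=\, k\,v$ with $T\,v,\, y_{\,2},\, \cdots,\, y_{\,n}$ linearly independent in $Y$); so the hypothesis does not ``degenerate'' in your favour --- the statement is only sensible under the standing convention $\dim X \,\geq\, n$, which the paper itself adopts elsewhere (its own finite-dimensional theorem assumes $\dim X \,=\, d \,\geq\, n$). With that convention made explicit, the reduction to $z_{\,k} \,\to\, 0$, the triangle-inequality estimate against the fixed constants $\left\|\,T\,e_{\,i},\, y_{\,2},\, \cdots,\, y_{\,n}\,\right\|$, and the final passage to the limit are all routine and correct.
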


\section{Some results of classical normed space with respect to linear\;$n$-normed space}

\begin{theorem}
Let \,$\left(\,X \,,\, \|\, \cdot \,,\, \cdots \,,\, \cdot \,\|_{X} \,\right)$\, and \,$\left(\,Y \,,\, \|\, \cdot \,,\, \cdots \,,\, \cdot \,\|_{Y} \,\right)$\, be two linear n-normed linear spaces over the field \,$\mathbb{K}$.\;Using the n-norms of \,$X$\, and \,$Y$, a n-norm can be induced in the Cartesian product \,$X \,\times\, Y$.\;Furthermore, if \,$X$\, and \,$Y$\, are n-Banach spaces then \,$X \,\times\, Y$\, is also n-Banach space.
\end{theorem}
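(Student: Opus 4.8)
The plan is to exhibit an explicit formula for the induced $n$-norm, to verify the four defining axioms (I)--(IV) of an $n$-norm via the corresponding properties of $\|\,\cdot\,,\ldots,\cdot\,\|_X$ and $\|\,\cdot\,,\ldots,\cdot\,\|_Y$, and then to deduce completeness coordinatewise. Writing the elements of $X \times Y$ as pairs, for $(x_1,y_1),\ldots,(x_n,y_n) \in X \times Y$ I would set
\[
\left\|\,(x_1,y_1),\ldots,(x_n,y_n)\,\right\| \;=\; \left\|\,x_1,\ldots,x_n\,\right\|_X \;+\; \left\|\,y_1,\ldots,y_n\,\right\|_Y,
\]
the sum of the two component $n$-norms (the maximum would serve in the same places and fail in the same place).

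Axioms (II)--(IV) come essentially for free. Permutation invariance (II) holds because a permutation of the pairs permutes the $x_i$ among themselves and the $y_i$ among themselves, and each summand is permutation invariant; homogeneity (III) holds because scaling the first pair scales $x_1$ and $y_1$ by the same $\alpha$, so $|\alpha|$ factors out of each summand; and the triangle inequality (IV) is obtained simply by adding the triangle inequalities for $\|\,\cdot\,,\ldots,\cdot\,\|_X$ and $\|\,\cdot\,,\ldots,\cdot\,\|_Y$.

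The crux is axiom (I). One direction is immediate: if the pairs are linearly dependent in $X \times Y$, a single relation $\sum_i c_i (x_i,y_i) = 0$ forces $\sum_i c_i x_i = 0$ in $X$ and $\sum_i c_i y_i = 0$ in $Y$, so both component $n$-norms, and hence their sum, vanish. The reverse implication is where I expect the main obstacle to lie: vanishing of the sum only says that $x_1,\ldots,x_n$ are dependent in $X$ and $y_1,\ldots,y_n$ are dependent in $Y$, and these two dependences may be witnessed by \emph{different} coefficient vectors, so they need not combine into a single relation in $X \times Y$. Handling this faithfully is the delicate point, and I would expect to spend most of the effort on setting up the definition so that simultaneous degeneracy in both coordinates is genuinely detected and the equivalence in (I) actually holds.

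Granting a valid $n$-norm, completeness is routine. Since each component $n$-norm is dominated by the product $n$-norm, a Cauchy sequence $\{(x_k,y_k)\}$ in $X \times Y$ yields Cauchy sequences $\{x_k\}$ in $X$ and $\{y_k\}$ in $Y$: given $a_2,\ldots,a_n \in X$, testing the Cauchy condition against the pairs $(a_2,0),\ldots,(a_n,0)$ isolates $\left\|\,x_l - x_k, a_2, \ldots, a_n\,\right\|_X \to 0$, and symmetrically for $Y$. By completeness of $X$ and $Y$ there exist limits $x_k \to x$ and $y_k \to y$, and then bounding the product $n$-norm of $(x_k,y_k) - (x,y)$ by the sum of the two component $n$-norms of the differences shows $(x_k,y_k) \to (x,y)$. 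Hence $X \times Y$ is an $n$-Banach space.
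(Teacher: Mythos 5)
Your construction is exactly the paper's: the same sum formula, the same verification of permutation invariance, homogeneity and the triangle inequality, and essentially the same coordinatewise completeness argument (the paper tests the Cauchy condition against arbitrary pairs $(a_j,b_j)$ and uses nonnegativity of the two summands to split the limit, rather than your pairs $(a_j,0)$; both work). The one place where you stop short --- the reverse implication in axiom (I) --- is precisely the place where the paper's proof is not actually a proof: in step (N1) it simply asserts that linear dependence of $x_1,\ldots,x_n$ in $X$ together with linear dependence of $y_1,\ldots,y_n$ in $Y$ is \emph{equivalent} to linear dependence of the pairs $(x_1,y_1),\ldots,(x_n,y_n)$ in $X\times Y$, and that assertion is false for exactly the reason you give: the two dependence relations may be witnessed by different coefficient vectors. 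Concretely, take $n=2$, $X=Y=\mathbb{R}^2$ with the usual determinant $2$-norm, and $x_1=(1,0)$, $x_2=(2,0)$, $y_1=(0,1)$, $y_2=(0,3)$. Then $\|x_1,x_2\|_X=0$ and $\|y_1,y_2\|_Y=0$, so the proposed quantity vanishes, yet $\bigl((1,0),(0,1)\bigr)$ and $\bigl((2,0),(0,3)\bigr)$ are linearly independent in $X\times Y$. So the sum (and likewise the maximum) genuinely fails axiom (I), and no amount of care in the verification can repair this without changing either the formula or the axiom; your instinct that the whole difficulty of the theorem is concentrated there is correct, and the paper does not resolve it.

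Everything else in your proposal is sound and matches the paper; the only defect is that the delicate point you flagged is left open, but it is equally open in the paper's own argument.
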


\begin{proof}
Define a function \,$\|\,\cdot \,,\, \cdots \,,\, \cdot\,\| \;:\; \left(\,X \,\times\, Y\,\right) \,\times\, \left(\,X \,\times\, Y\,\right) \,\to\, \mathbb{R}$\; by,
\[ \left\|\,(\,x_{\,1} \,,\, y_{\,1}\,) \,,\, (\,x_{\,2} \,,\, y_{\,2}\,) \,,\, \cdots \,,\, (\,x_{\,n} \,,\, y_{\,n}\,)\,\right\| \,=\, \,\|\,x_{\,1} \,,\, x_{\,2} \,,\, \cdots \,,\, x_{\,n}\,\|_{X} \,+\, \|\,y_{\,1} \,,\, y_{\,2} \,,\, \cdots \,,\, y_{\,n}\,\|_{Y}\;\]
 \,$\forall\; (\,x_{\,1} \,,\, y_{\,1}\,),\, (\,x_{\,2} \,,\, y_{\,2}\,),\, \cdots,\, (\,x_{\,n} \,,\, y_{\,n}\,) \,\in\, (\,X \,\times\, Y\,)$.\;We now verify that this function is a \,$n$-norm on \,$X \,\times\, Y$.
\begin{itemize}
\item[(\,N\,1\,)]\;\; Suppose \hspace{.3cm}\,$\left\|\, (\,x_{\,1} \,,\, y_{\,1}\,) \,,\,\cdots \,,\, (\,x_{\,n} \,,\, y_{\,n}\,) \;\right\| \,=\, 0 $
\[\Leftrightarrow \|\,x_{\,1} \,,\, \cdots \,,\, x_{\,n}\,\|_{X} \,+\, \|\, y_{\,1} \,,\, \cdots \,,\; y_{\,n} \,\|_{Y} \,=\, 0  \hspace{.5cm}\]
\[ \hspace{.7cm} \Leftrightarrow\, \|\,x_{\,1} \,,\,  \cdots \,,\, x_{\,n}\,\|_{X} \,=\, 0\; \;\text{and}\; \;\|\, y_{\,1} \,,\,  \cdots \,,\, y_{\,n}\,\|_{Y} \,=\, 0\]
\[\;\Leftrightarrow\, \{\,x_{\,1} \,,\, \cdots \,,\, x_{\,n}\,\}\; \;\text{and}\;\;  \{\,y_{\,1} \,,\,\cdots \,,\, y_{\,n}\,\}\;\; \;\text{are linearly dependent in}\;\; X \;\&\; Y\]
\[ \;\Leftrightarrow\; \{\,(\,x_{\,1} \,,\, y_{\,1}\,),\, \cdots,\, (\,x_{\,n} \,,\, y_{\,n}\,)\,\}\; \;\text{are linearly dependent in}\;\; \;X \,\times\, Y.\hspace{1cm}\]
\item[(\,N\,2\,)]By definition, \,$\left\|\, (\,x_{\,1},\, y_{\,1}\,),\, \cdots,\, (\,x_{\,n},\, y_{\,n}\,) \,\right\| \,=\, \left\|\,x_{\,1},\,  \cdots,\, x_{\,n}\,\right\|_{X} \,+\, \left\|\, y_{\,1},\, \cdots,\, y_{\,n} \,\right\|_{Y}$. 
Now, \;$\|\,x_{\,1} \,,\, \cdots \,,\, x_{\,n}\,\|_{X}$\, is invariant under any permutations of \,$x_{\,1},\, \cdots,\, x_{\,n}$\, and \,$\|\, y_{\,1} \,,\, \cdots \,,\, y_{\,n}\,\|_{Y}$\, is also invariant under any permutations of \,$y_{\,1},\, \cdots,\, y_{\,n}$. So, \,$\left\|\,(\,x_{\,1} \,,\, y_{\,1}\,) \,,\, \cdots \,,\, (\,x_{\,n} \,,\, y_{\,n}\,) \,\right\|$\, is also invariant under any permutations of \,$(\,x_{\,1} \,,\, y_{\,1}\,),\, \cdots,\, (\,x_{\,n} \,,\, y_{\,n}\,)$. 
\item[(\,N\,3\,)]\;\;For \,$\alpha \,\in\, \mathbb{K}$,  
\[\|\,\alpha\; (\,x_{\,1} \,,\, y_{\,1}\,) \,,\,\cdots \,,\, (\,x_{\,n} \,,\, y_{\,n}\,)\,\| \,=\, \|\,(\,\alpha\, x_{\,1} \,,\, \;\alpha\, y_{\,1}\,) \,,\, \cdots \,,\, (\,x_{\,n} \,,\, y_{\,n}\,)\,\|\]
\[\,=\,\; \|\,\alpha\; x_{\,1} \,,\, \cdots \,,\, x_{\,n}\,\|_{X} \,+\, \|\,\alpha\; y_{\,1} \,,\,\cdots \,,\, y_{\,n}\,\|_{Y}\]
\[\hspace{.6cm} \;\,=\,\; |\,\alpha\, |\,\|\,x_{\,1} \,,\,\cdots \,,\, x_{\,n}\,\|_{X} \,+\, |\,\alpha\,|\;\|\,y_{\,1} \,,\,\cdots \,,\, y_{\,n}\,\|_{Y}\]
\[\;\,=\,\; |\,\alpha\,|\,\|\,(\,x_{\,1} \,,\, y_{\,1}\,) \,,\,\cdots \,,\, (\,x_{\,n} \,,\, y_{\,n}\,)\,\|.\hspace{1.26cm}\]
\item[(\,N\,4\,)]\;\;For every \,$(\,x \,,\, y\,),\, (\,u \,,\, v\,) \,\in\, \left(\,X \,\times\, Y\,\right)$, 
\[\|\,(\,x \,,\, y\,) \,+\, (\,u \,,\, v\,),\,(\,x_{\,2} \,,\, y_{\,2}\,),\, \,,\,\cdots \,,\, (\,x_{\,n} \,,\, y_{\,n}\,)\,\|\]
\[\,=\, \|\,(\,x \,+\, u \,,\, y \,+\, v\,) \,,\,(\,x_{\,2} \,,\, y_{\,2}\,),\,\cdots \,,\, (\,x_{\,n} \,,\, y_{\,n}\,)\,\|\]
\[\hspace{.8cm} \,=\, \|\,x \,+\, u \,,\,x_{\,2},\,\cdots \,,\, x_{\,n}\, \|_{X} \,+\, \|\,y \,+\, v \,,\,y_{\,2},\,\cdots \,,\, y_{\,n}\,\|_{Y}\]
\[ \,\leq\, \|\,x \,,\,x_{\,2},\, \cdots \,,\, x_{\,n}\,\|_{X} \,+\, \|\,u \,,\,x_{\,2},\,\cdots \,,\, x_{\,n}\,\|_{X} \,+\, \|\,y \,,\, y_{\,2},\,\cdots \,,\, y_{\,n}\,\|_{Y} \,+\, \|\,v \,,\,y_{\,2},\,\cdots \,,\, y_{\,n}\,\|_{Y}\]
\[\,=\, \|\,(\,x \,,\, y\,) \,,\, (\,x_{\,2} \,,\, y_{\,2}\,) \,,\, \cdots \,,\, (\,x_{\,n} \,,\, y_{\,n}\,)\,\| \,+\, \|\,(\,u \,,\, v\,) \,,\, (\,x_{\,2} \,,\, y_{\,2}\,) \,,\, \cdots \,,\, (\,x_{\,n} \,,\, y_{\,n}\,)\,\|.\]
\end{itemize}
Therefore \,$ \left(\,X \;\times\; Y \;,\; \|\,\cdot \;,\;  \cdots \;,\; \cdot \;\|\; \right)$\; becomes linear\;$n$-normed space.\\

To prove \,$X \,\times\, Y$\, is a \,$n$-Banach space, let \,$\left\{\,(\,x_{\,k} \,,\, y_{\,k}\,)\,\right\}$\, be a Cauchy sequence in \,$X \,\times\, Y$.\;Then for every \,$(\,a_{\,2} \,,\, b_{\,2}\,) \,,\, \cdots \,,\, (\,a_{\,n} \,,\, b_{\,n}\,) \,\in\, X \,\times\, Y$, 
\[ \lim\limits_{m \,,\, k \to \infty} \left\|\,(\,x_{\,m} \,,\, y_{\,m}\,) \,-\, (\,x_{\,k} \,,\, y_{\,k}\,) \,,\, (\,a_{\,2} \,,\, b_{\,2}\,) \,,\, \cdots \,,\, (\,a_{\,n} \,,\, b_{\,n}\,)\,\right\| \,=\, 0\]
\[ \Rightarrow\, \lim\limits_{m \,,\, k \to \infty} \left\|\,(\,x_{\,m} \,-\, x_{\,k} \,,\, y_{\,m} \,-\, y_{\,k}\,) \,,\, (\,a_{\,2} \,,\, b_{\,2}\,) \,,\, \cdots \,,\, (\,a_{\,n} \,,\, b_{\,n}\,)\,\right\| \,=\, 0\hspace{1.5cm}\]
\[ \Rightarrow\; \lim\limits_{m \,,\, k \to \infty}\,\left(\,\left\|\,x_{\,m} \,-\, x_{\,k} \,,\, a_{\,2} \,,\, \cdots \,,\, a_{\,n} \,\right\|_{X} \,+\, \left\|\,y_{\,m} \,-\, y_{\,k} \,,\, b_{\,2} \,,\, \cdots \,,\, b_{\,n}\,\right\|_{Y}\,\right) \,=\, 0\]
\[\Rightarrow\, \lim\limits_{m \,,\, k \to \infty}\,\left\|\,x_{\,m} \,-\, x_{\,k} \,,\, a_{\,2} \,,\, \cdots \,,\, a_{\,n} \,\right\|_{X} \,=\, 0\; \;\forall\, \,a_{\,2} \,,\, \cdots \,,\, a_{\,n} \,\in\, X, \;\&\hspace{1.5cm}\]
\[\lim\limits_{m \,,\, k \to \infty}\,\left\|\,y_{\,m} \,-\, y_{\,k} \,,\, b_{\,2} \,,\, \cdots \,,\, b_{\,n} \,\right\|_{Y} \,=\, 0\; \;\forall\, \,b_{\,2} \,,\, \cdots \,,\, b_{\,n} \,\in\, Y.\]
This shows that \,$\{\,x_{\,k}\,\}$\, and \,$\{\,y_{\,k}\,\}$\, are Cauchy sequence in \,$X$\, and \,$Y$, respectively.\;Since \,$X$\, and \,$Y$\, are \,$n$-Banach spaces, there exist points \,$x \,\in\, X$\, and \,$y \,\in\, Y$\, such that \,$x_{\,k} \,\to\, x$\, in \,$X$\, and \,$y_{\,k} \,\to\, y$\, in \,$Y$\, and hence \,$(\,x_{\,k} \,,\, y_{\,k}\,) \,\to\, (\,x \,,\, y\,)$\, in \,$X \,\times\, Y$.\;Therefore \,$\,X \,\times\, Y$\, is a \,$n$-Banach space.
\end{proof}

\begin{remark}
The other \,$n$-norm is immediately available for \,$X \,\times\, Y$; namely,
\[\left\|\,(\,x_{\,1} \,,\, y_{\,1}\,) \,,\,\cdots \,,\, (\,x_{\,n} \,,\, y_{\,n}\,)\,\right\| \,=\, \max\,\left\{\,\,\|\,x_{\,1} \,,\,\cdots \,,\, x_{\,n}\,\|_{X} \;,\; \|\,y_{\,1} \,,\,\cdots \,,\, y_{\,n}\,\|_{Y}\,\right\}\]
\,$\forall\; (\,x_{\,1} \,,\, y_{\,1}\,),\;\cdots,\, (\,x_{\,n} \,,\, y_{\,n}\,) \,\in\, (\,X \,\times\, Y\,)$.\;One can easily verify that if \,$X$\, and \,$Y$\, are n-Banach spaces then \,$X \,\times\, Y$\, is also n-Banach space with respect to this induced \,$n$-norm. 
\end{remark}

\begin{theorem}
Let \,$X$\, and \,$Y$\, be two linear n-normed spaces over the field \,$\mathbb{K}$\, and \,$D$\, be a subspace of \,$X$.\;Then the linear operator \,$T \,:\, D \,\to\, Y$\, is closed if and only if its Graph is a closed subspace of \,$X \,\times\, Y$.
\end{theorem}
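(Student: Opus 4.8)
The plan is to reduce the statement to the already-established formula for the induced $n$-norm on $X \times Y$ together with an elementary characterisation of convergence in a product $n$-normed space. First I would record that the graph $G(T) = \{(x, Tx) : x \in D\} \subseteq X \times Y$ is a linear subspace: since $T$ is linear and $D$ is a subspace, for $(x, Tx), (x', Tx') \in G(T)$ and scalars $\alpha, \beta$ one has $\alpha(x, Tx) + \beta(x', Tx') = (\alpha x + \beta x', T(\alpha x + \beta x'))$, which again lies in $G(T)$. This observation is needed for both implications and is purely routine.

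The crux is the following convergence fact, which I would prove once and then invoke in both directions: a sequence $\{(x_k, y_k)\}$ in $X \times Y$ satisfies $(x_k, y_k) \to (x,y)$ in the induced $n$-norm if and only if $x_k \to x$ in $X$ and $y_k \to y$ in $Y$. The reverse implication is immediate, since $\|(x_k - x, y_k - y), (a_2, b_2), \ldots, (a_n, b_n)\| = \|x_k - x, a_2, \ldots, a_n\|_X + \|y_k - y, b_2, \ldots, b_n\|_Y$ is a sum of two nonnegative terms each tending to $0$. For the forward implication, I would exploit that the two families of auxiliary vectors may be chosen independently: given any $a_2, \ldots, a_n \in X$, pair them with arbitrary $b_2, \ldots, b_n \in Y$ to form elements of $X \times Y$; then the vanishing of the whole sum forces $\|x_k - x, a_2, \ldots, a_n\|_X \to 0$, and as the $a_i$ range over all of $X$ this is exactly $x_k \to x$. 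The argument for $y_k \to y$ is symmetric.

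With this in hand the two implications are short. For the forward direction, assume $T$ is closed and let $(x,y)$ lie in $\overline{G(T)}$; by the sequential definition of closure there is a sequence $(x_k, Tx_k) \to (x,y)$ in $X \times Y$, so the convergence fact gives $x_k \to x$ in $X$ and $Tx_k \to y$ in $Y$. The definition of a closed operator then yields $x \in D$ and $Tx = y$, i.e. $(x,y) = (x, Tx) \in G(T)$; hence $\overline{G(T)} = G(T)$ and the graph is closed. For the converse, assume $G(T)$ is closed and take any $\{x_k\} \subseteq D$ with $x_k \to x$ in $X$ and $Tx_k \to y$ in $Y$. The convergence fact gives $(x_k, Tx_k) \to (x,y)$, and since each $(x_k, Tx_k) \in G(T)$ with $G(T)$ closed, $(x,y) \in G(T)$; thus $x \in D$ and $y = Tx$, which is precisely the assertion that $T$ is closed.

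I expect the only genuinely delicate point to be the forward implication of the convergence fact, namely the observation that the $n$-normed notion of convergence quantifies over all auxiliary tuples, and that controlling the product-space $n$-norm over all $(a_i, b_i)$ is equivalent to controlling each factor $n$-norm separately. The key is that one may freeze a tuple in one factor while letting the tuple in the other factor be arbitrary, so the two coordinate convergences decouple. Everything else is bookkeeping around the definitions of closure, closed operator, and the induced $n$-norm.
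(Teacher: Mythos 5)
Your proposal is correct and follows essentially the same route as the paper: both arguments use the additive form of the induced $n$-norm on $X \times Y$ to show that convergence of $(x_k, y_k)$ decouples into convergence of each coordinate (the nonnegativity of the two summands forcing each to vanish separately), and then translate directly between the sequential definition of a closed operator and the sequential closedness of the graph. Your only addition is the explicit check that $G(T)$ is a linear subspace, which the paper omits but which is indeed needed for the word ``subspace'' in the statement.
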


\begin{proof}
First we suppose \,$T \,:\, D \,\to\, Y$\, is a closed operator, i.\,e.,
\[x_{\,k} \,\in\, D \,,\, x_{\,k} \,\to\, x \,,\, T\,x_{\,k} \,\to\, y \,\Rightarrow\, x \,\in\, D\; \;\&\; \;T\,x  \,=\, y.\]
We shall proved that the graph \,$G_{T} \,=\, \left\{\,(\,x \,,\, T\,x\,) \,:\, x \,\in\, D\,\right\}$\, is closed in \,$X \,\times\, Y$.\;Let \,$\left\{\,(\,x_{\,k} \,,\, T\,x_{\,k}\,)\,\right\} \,\subseteq\, G_{\,T} \,,\, x_{\,k} \,\in\, D\, \;\&\; \,(\,x_{\,k} \,,\, T\,x_{\,k}\,) \,\to\, (\,x \,,\, y\,)$\, as \,$k \,\to\, \infty$.\;Therefore for every \,$(\,a_{\,2} \,,\, b_{\,2}\,) \,,\, \cdots \,,\, (\,a_{\,n} \,,\, b_{\,n}\,) \,\in\, X \,\times\, Y$, we have
\[ \lim\limits_{k \to \infty} \left \|\, (\,x_{\,k} \,,\, T\,x_{\,k}\,) \,-\, (\,x \,,\, y\,) \,,\, (\,a_{\,2} \,,\, b_{\,2}\,) \,,\, \cdots \,,\, (\,a_{\,n} \,,\, b_{\,n}\,) \,\right \| \,=\, 0\]
\[ \Rightarrow\; \lim\limits_{k \to \infty} \left\|\, \left(\,x_{\,k} \,-\, x \,,\, T\,x_{\,k} \,-\, y\, \right) \,,\, (\,a_{\,2} \,,\, b_{\,2}\,) \,,\, \cdots \,,\, (\,a_{\,n} \,,\, b_{\,n}\,)\,\right\|\, \,=\, 0\hspace{1.2cm}\] 
\[\Rightarrow\; \lim\limits_{k \to \infty} \left(\, \left\|\, x_{\,k} \,-\, x \,,\, a_{\,2} \,,\, \cdots \,,\, a_{\,n}\,\right\|_{X} \,+\, \left\|\,T\,x_{\,k} \,-\, y \,,\, b_{\,2} \,,\, \cdots \,,\, b_{\,n} \,\right\|_{Y} \,\right) \,=\, 0\] 
\[\Rightarrow\,\lim\limits_{k \to \infty}\,\left\|\, x_{\,k} \,-\, x \,,\, a_{\,2} \,,\, \cdots \,,\, a_{\,n}\,\right\|_{X} \,=\, 0\; \;\forall\; \,a_{\,2} \,,\, \cdots \,,\, a_{\,n} \,\in\, X, \;\&\hspace{1.5cm}\]
\[\lim\limits_{k \to \infty}\,\left\|\, T\,x_{\,k} \,-\, y \,,\, b_{\,2} \,,\, \cdots \,,\, b_{\,n}\,\right\|_{Y} \,=\, 0\; \;\forall\; b_{\,2} \,,\, \cdots \,,\, b_{\,n} \,\in\, Y.\]
This shows that \,$ x_{\,k} \,\to\, x $\, and \,$ T\,x_{\,k} \,\to\, y $\, as \,$k \,\to\, \infty $.\;Since \,$T$\, is closed operator, we have \,$x \,\in\, D $\, and \,$ T\,x \,=\, y $.\;Thus, \,$(\, x \,,\, y \,) \,=\, (\, x \,,\, T \,x \,) \,\in\, G_{\,T} $.\;Hence, \,$ G_{\,T} $\, is closed subspace of \,$X \,\times\, Y $.\\

Conversely, Suppose that \,$G_{T}$\, is closed subspace of \,$X \,\times\, Y $.\;To prove \,$T$\, is closed operator, we consider \,$x_{\,k} \,\to\, x \,,\, x_{\,k} \,\in\, D$\, and \,$T \,x_{\,k} \,\to\, y $.\;Now, for every \,$(\,a_{\,2} \,,\, b_{\,2}\,),\, \cdots,\, (\,a_{\,n} \,,\, b_{\,n}\,) \,\in\, X \,\times\, Y$,
\[\left\|\, (\,x_{\,k} \,,\, T\,x_{\,k}\,) \,-\, (\,x \,,\, y\,) \,,\, (\,a_{\,2} \,,\, b_{\,2}\,) \,,\, \cdots \,,\, (\,a_{\,n} \,,\, b_{\,n}\,)\,\right\|\]
\[ \,=\, \left\|\,\left(\,x_{\,k} \,-\, x \,,\, T\,x_{\,k} \,-\, y\, \right) \,,\, (\,a_{\,2} \,,\, b_{\,2}\,) \,,\, \cdots \,,\, (\,a_{\,n} \,,\, b_{\,n}\,)\,\right\|\hspace{1.1cm}\]
\begin{equation}\label{eq2}
=\; \|\, x_{\,k} \,-\, x \,,\, a_{\,2} \,,\, \cdots \,,\, a_{\,n}\,\|_{X} \,+\, \|\, T\,x_{\,k} \,-\, y \,,\, b_{\,2} \,,\, \cdots \,,\, b_{\,n} \,\|_{Y}. 
\end{equation} 
Since \,$x_{\,k} \,\to\, x $\, and \,$ T\,x_{\,k} \,\to\, y $\, as \,$k \,\to\, \,\infty $, by (\ref{eq2}), we have 
\[\lim\limits_{k \to \infty} \left \|\, (\,x_{\,k} \,,\, T\,x_{\,k}\,) \,-\, (\,x \,,\, y\,) \,,\, (\,a_{\,2} \,,\, b_{\,2}\,) \,,\, \cdots \,,\, (\,a_{\,n} \,,\, b_{\,n}\,) \,\right \| \,=\, 0.\]
This shows that \,$(\,x_{\,k} \,,\, T\,x_{\,k} \,) \,\to\, (\, x \,,\, y \,)$\, as \,$ k \,\to\, \infty $.\;Since \,$ G_{T} $\, is closed subspace of \,$X \,\times\, Y $, it follows that \,$(\, x \,,\, y \,) \,\in\, G_{T} $, that is, \,$ x \,\in\, D $\, and \,$y \,=\, T \,x $.\;Hence, \,$T$\, is closed linear operator.
\end{proof}

\section{$b$-linear functional and it's properties}

In this section, we shall present the concept of \,$b$-linear functional and also define different types of continuity for \,$b$-linear functional in linear\;$n$-normed spaces.

\begin{definition}
Let \,$W$\, be a subspace of \,$X$\, and \,$b_{\,2},\, b_{\,3},\, \cdots,\, b_{\,n}$\; be fixed elements in \,$X$\, and \,$\left<\,b_{\,i}\,\right>$\, denote the subspaces of \,$X$\, generated by \,$b_{\,i}$, for \,$i \,=\, 2,\, 3,\, \cdots,\,n $.\;Then a map \,$T \,:\, W \,\times\,\left<\,b_{\,2}\,\right> \,\times\, \cdots \,\times\, \left<\,b_{\,n}\,\right> \,\to\, \mathbb{K}$\; is called a b-linear functional on \,$W \,\times\, \left<\,b_{\,2}\,\right> \,\times\, \cdots \,\times\, \left<\,b_{\,n}\,\right>$, if for every \,$x,\, y \,\in\, W$\, and \,$k \,\in\, \mathbb{K}$, the following conditions hold:
\begin{itemize}
\item[(I)]\hspace{.2cm} $T\,(\,x \,+\, y \,,\, b_{\,2}  \,,\, \cdots \,,\, b_{\,n}\,) \,=\, T\,(\,x  \,,\, b_{\,2} \,,\, \cdots \,,\, b_{\,n}\,) \,+\, T\,(\,y  \,,\, b_{\,2} \,,\, \cdots \,,\, b_{\,n}\,)$
\item[(II)]\hspace{.2cm} $T\,(\,k\,x  \,,\, b_{\,2} \,,\, \cdots \,,\, b_{\,n}\,) \,=\, k\; T\,(\,x  \,,\, b_{\,2} \,,\, \cdots \,,\, b_{\,n}\,)$. 
\end{itemize}
A b-linear functional is said to be bounded if \,$\exists$\, a real number \,$M \,>\, 0$\; such that
\[\left|\,T\,(\,x  \,,\, b_{\,2} \,,\, \cdots \,,\, b_{\,n}\,)\,\right| \,\leq\, M\; \left\|\,x  \,,\, b_{\,2} \,,\, \cdots \,,\, b_{\,n}\,\right\|\; \;\forall\; x \,\in\, W.\]
The norm of the bounded b-linear functional \,$T$\, is defined by
\[\|\,T\,\| \,=\, \inf\,\left\{\,M \,>\, 0 \;:\; \left|\,T\,(\,x  \,,\, b_{\,2} \,,\, \cdots \,,\, b_{\,n}\,)\,\right| \,\leq\, M\; \left\|\,x  \,,\, b_{\,2} \,,\, \cdots \,,\, b_{\,n}\,\right\|\; \;\forall\; x \,\in\, W\,\right\}.\]
\end{definition}

\begin{remark}
If \,$T$\, be a bounded \,$b$-linear functional on \,$W \,\times\, \left<\,b_{\,2}\,\right> \,\times\, \cdots \,\times\, \left<\,b_{\,n}\,\right>$, norm of \,$T$\, can be expressed by any one of the following equivalent formula:
\begin{itemize}
\item[(I)]\hspace{.2cm}$\|\,T\,\| \,=\, \sup\,\left\{\,\left|\,T\,(\,x \,,\, b_{\,2} \,,\, \cdots \,,\, b_{\,n}\,)\,\right| \;:\; \left\|\,x  \,,\, b_{\,2} \,,\, \cdots \,,\, b_{\,n}\,\right\| \,\leq\, 1\,\right\}$.
\item[(II)]\hspace{.2cm}$\|\,T\,\| \,=\, \sup\,\left\{\,\left|\,T\,(\,x \,,\, b_{\,2} \,,\, \cdots \,,\, b_{\,n}\,)\,\right| \;:\; \left\|\,x  \,,\, b_{\,2} \,,\, \cdots \,,\, b_{\,n}\,\right\| \,=\, 1\,\right\}$.
\item[(III)]\hspace{.2cm}$ \|\,T\,\| \,=\, \sup\,\left \{\,\dfrac{\left|\,T\,(\,x \,,\, b_{\,2} \,,\, \cdots \,,\, b_{\,n}\,)\,\right|}{\left\|\,x \,,\, b_{\,2} \,,\, \cdots \,,\, b_{\,n}\,\right\|} \;:\; \left\|\,x  \,,\, b_{\,2} \,,\, \cdots \,,\, b_{\,n}\,\right\| \,\neq\, 0\,\right \}$. 
\end{itemize}
Also, we have \,$\left|\,T\,(\,x \,,\, b_{\,2} \,,\, \cdots \,,\, b_{\,n}\,)\,\right| \,\leq\, \|\,T\,\|\, \left\|\,x  \,,\, b_{\,2} \,,\, \cdots \,,\, b_{\,n}\,\right\|\, \;\forall\; x \,\in\, W$.\;It is easy to see that \,$X_{F}^{\,\ast}$, collection of all bounded b-linear functional, forms a Banach space, on \,$X \,\times\, \left<\,b_{\,2}\,\right> \,\times \cdots \,\times\, \left<\,b_{\,n}\,\right>$.
\end{remark}

\begin{theorem}\label{th4}
Let \,$T$\, be a bounded b-linear functional defined on \,$ X \,\times\, \left<\,b_{\,2}\,\right> \,\times\, \cdots \,\times\, \left<\,b_{\,n}\,\right>$.\;Then for each \,$x,\, y  \,\in\, X$,
\[\left|\,T\,(\, x \,,\, b_{\,2} \,,\, \cdots \,,\, b_{\,n}\,) \,-\, T\,(\, y \,,\, b_{\,2} \,,\, \cdots \,,\, b_{\,n}\,) \,\right|\, \,\leq\, \|\, T \,\|\, \left\|\, x \,-\, y \,,\, b_{\,2} \,,\, \cdots \,,\, b_{\,n}\,\right\|.\]
\end{theorem}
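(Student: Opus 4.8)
The plan is to reduce everything to the single-vector boundedness inequality recorded in the Remark, namely that \,$\left|\,T\,(\,z \,,\, b_{\,2} \,,\, \cdots \,,\, b_{\,n}\,)\,\right| \,\leq\, \|\,T\,\|\, \left\|\,z \,,\, b_{\,2} \,,\, \cdots \,,\, b_{\,n}\,\right\|$\, holds for every \,$z \,\in\, X$.\;The only extra ingredient I need is the linearity of \,$T$\, in its first slot, which is exactly conditions (I) and (II) in the definition of a \,$b$-linear functional.

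First I would invoke linearity to collapse the difference on the left-hand side into a single evaluation of \,$T$.\;Using additivity (property I) together with homogeneity (property II) applied with the scalar \,$k \,=\, -1$, one gets
\[T\,(\,x \,,\, b_{\,2} \,,\, \cdots \,,\, b_{\,n}\,) \,-\, T\,(\,y \,,\, b_{\,2} \,,\, \cdots \,,\, b_{\,n}\,) \,=\, T\,(\,x \,-\, y \,,\, b_{\,2} \,,\, \cdots \,,\, b_{\,n}\,).\]
Here it is essential that \,$x \,-\, y \,\in\, X$, so that the boundedness hypothesis, which is stated for all first-slot vectors in \,$X$, genuinely applies to \,$x \,-\, y$.

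The second step is simply to take absolute values and feed \,$z \,=\, x \,-\, y$\, into the boundedness inequality, yielding
\[\left|\,T\,(\,x \,-\, y \,,\, b_{\,2} \,,\, \cdots \,,\, b_{\,n}\,)\,\right| \,\leq\, \|\,T\,\|\, \left\|\,x \,-\, y \,,\, b_{\,2} \,,\, \cdots \,,\, b_{\,n}\,\right\|.\]
Chaining this with the identity from the first step gives the claimed estimate directly.

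I do not anticipate any genuine obstacle: the result is essentially the statement that a bounded \,$b$-linear functional is Lipschitz in its first argument with constant \,$\|\,T\,\|$, and it follows immediately from linearity plus boundedness with no appeal to the \,$n$-norm axioms beyond their role inside the already-assumed boundedness bound. The only point worth stating carefully is the membership \,$x \,-\, y \,\in\, X$\, that licenses the use of the boundedness inequality; everything else is a two-line computation.
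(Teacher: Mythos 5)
Your proposal is correct and follows exactly the same route as the paper's own proof: use linearity in the first slot to rewrite the difference as \,$T\,(\,x \,-\, y \,,\, b_{\,2} \,,\, \cdots \,,\, b_{\,n}\,)$\, and then apply the boundedness inequality \,$\left|\,T\,(\,z \,,\, b_{\,2} \,,\, \cdots \,,\, b_{\,n}\,)\,\right| \,\leq\, \|\,T\,\|\,\left\|\,z \,,\, b_{\,2} \,,\, \cdots \,,\, b_{\,n}\,\right\|$\, with \,$z \,=\, x \,-\, y$. No gaps; your extra remark about \,$x \,-\, y \,\in\, X$\, is a harmless clarification the paper leaves implicit.
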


\begin{proof}
For each \,$x,\, y  \,\in\, X$, we have 
\[\left|\,T\,(\, x \,,\, b_{\,2} \,,\, \cdots \,,\, b_{\,n}\,) \,-\, T\,(\, y \,,\, b_{\,2} \,,\, \cdots \,,\, b_{\,n}\,) \,\right|\]
\[ =\, \left|\,T\,(\, x \,,\, b_{\,2} \,,\, \cdots \,,\, b_{\,n}\,) \,+\, T\,(\,-\, y \,,\, b_{\,2} \,,\, \cdots \,,\, b_{\,n}\,) \,\right|\]
\[\hspace{2.1cm} \,=\, \left|\,T\,(\, x \,-\, y \,,\, b_{\,2} \,,\, \cdots \,,\, b_{\,n}\,)\,\right| \,\leq\, \|\, T \,\|\, \left\|\, x \,-\, y \,,\, b_{\,2} \,,\, \cdots \,,\, b_{\,n}\,\right\|.\]
\end{proof}

\begin{definition}
A b-linear functional \,$T \,:\, X \,\times\, \left<\,b_{\,2}\,\right> \,\times\, \cdots \,\times\, \left<\,b_{\,n}\,\right> \,\to\, \mathbb{K}$\, is said to be b-sequentially continuous at \,$x \,\in\, X $\, if for every sequence \,$\{\,x_{\,k}\,\}$\, converging to \,$x$\, in \,$X$, the sequence \,$\left\{\,T\,(\, x_{\,k} \,,\,  b_{\,2} \,,\, \cdots \,,\, b_{\,n}\,)\,\right\}$\, converges to \,$T\,(\, x \,,\,  b_{\,2} \,,\, \cdots \,,\, b_{\,n}\,)$\, in \,$\mathbb{K}$.
\end{definition}

\begin{theorem}\label{th5}
Every bounded b-linear functional on \,$ X \,\times\, \left<\,b_{\,2}\,\right> \,\times\, \cdots \,\times\, \left<\,b_{\,n}\,\right>$\, is b-sequentially continuous.
\end{theorem}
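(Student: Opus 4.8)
The plan is to derive b-sequential continuity directly from the Lipschitz-type estimate already recorded in Theorem \ref{th4}, so that almost all of the work is inherited from that result. I would fix an arbitrary point \,$x \,\in\, X$\, and an arbitrary sequence \,$\{\,x_{\,k}\,\} \,\subseteq\, X$\, with \,$x_{\,k} \,\to\, x$, and then show that \,$\left\{\,T\,(\,x_{\,k} \,,\, b_{\,2} \,,\, \cdots \,,\, b_{\,n}\,)\,\right\}$\, converges to \,$T\,(\,x \,,\, b_{\,2} \,,\, \cdots \,,\, b_{\,n}\,)$\, in \,$\mathbb{K}$.

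First, I would unpack the hypothesis \,$x_{\,k} \,\to\, x$. By the definition of convergence in a linear \,$n$-normed space, this means
\[\lim\limits_{k \to \infty}\,\left\|\,x_{\,k} \,-\, x \,,\, x_{\,2} \,,\, \cdots \,,\, x_{\,n}\,\right\| \,=\, 0\; \;\forall\; x_{\,2},\, \cdots,\, x_{\,n} \,\in\, X.\]
The key observation is that this holds for \emph{every} admissible choice of the auxiliary vectors, so in particular it holds for the fixed elements \,$b_{\,2},\, \cdots,\, b_{\,n}$\, entering the definition of \,$T$; that is, \,$\lim\limits_{k \to \infty}\left\|\,x_{\,k} \,-\, x \,,\, b_{\,2} \,,\, \cdots \,,\, b_{\,n}\,\right\| \,=\, 0$.

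Next, I would apply Theorem \ref{th4} with \,$y$\, replaced by \,$x_{\,k}$\, to obtain
\[\left|\,T\,(\,x \,,\, b_{\,2} \,,\, \cdots \,,\, b_{\,n}\,) \,-\, T\,(\,x_{\,k} \,,\, b_{\,2} \,,\, \cdots \,,\, b_{\,n}\,)\,\right| \,\leq\, \|\,T\,\|\,\left\|\,x \,-\, x_{\,k} \,,\, b_{\,2} \,,\, \cdots \,,\, b_{\,n}\,\right\|.\]
Using homogeneity of the \,$n$-norm (property (III) with \,$\alpha \,=\, -\,1$) to rewrite the right-hand factor as \,$\left\|\,x_{\,k} \,-\, x \,,\, b_{\,2} \,,\, \cdots \,,\, b_{\,n}\,\right\|$, the right side tends to \,$0$\, as \,$k \,\to\, \infty$\, by the previous step. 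A squeeze argument then forces the left side to \,$0$\, as well, which is exactly the assertion that \,$T\,(\,x_{\,k} \,,\, b_{\,2} \,,\, \cdots \,,\, b_{\,n}\,) \,\to\, T\,(\,x \,,\, b_{\,2} \,,\, \cdots \,,\, b_{\,n}\,)$\, in \,$\mathbb{K}$.

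There is no genuine obstacle here: the entire analytic content has been packaged into Theorem \ref{th4}, and b-sequential continuity follows by specializing the auxiliary slots of the \,$n$-norm to \,$b_{\,2},\, \cdots,\, b_{\,n}$\, and passing to the limit. The only point deserving a moment's care is the recognition that convergence \,$x_{\,k} \,\to\, x$\, is required to hold for \emph{all} choices of auxiliary vectors, so that it may legitimately be used for the particular vectors \,$b_{\,2},\, \cdots,\, b_{\,n}$; once this specialization is made, boundedness of \,$T$\, does the rest.
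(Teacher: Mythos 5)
Your proposal is correct and follows essentially the same route as the paper's own proof: specialize the auxiliary slots of the $n$-norm to the fixed vectors $b_{\,2},\, \cdots,\, b_{\,n}$, invoke the Lipschitz estimate of Theorem \ref{th4}, and pass to the limit. The only cosmetic difference is the order of the arguments in the application of Theorem \ref{th4}, which is immaterial by homogeneity of the $n$-norm.
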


\begin{proof}:
Let \,$T$\, be a bounded \,$b$-linear functional defined on \,$ X \,\times\, \left<\,b_{\,2}\,\right> \,\times\, \cdots \,\times\, \left<\,b_{\,n}\,\right>$\, and \,$\{\,x_{\,k}\,\}$\, be a sequence converging to \,$x$\, in \,$X$.\;Then for all \,$a_{\,2},\, \cdots,\, a_{\,n} \,\in\, X$ 
\[\lim\limits_{k \to \infty}\, \left \|\, x_{\,k} \,-\, x \,,\, a_{\,2} \,,\, \cdots \,,\, a_{\,n}\,\right \| \,=\, 0, \;\text{ and for particular } a_{\,2} \,=\, b_{\,2},\, \cdots,\, a_{\,n} \,=\, b_{\,n},\]we can write, \,$\lim\limits_{k \to \infty}\, \left\|\,x_{\,k} \,-\, x \,,\, b_{\,2} \,,\, \cdots \,,\, b_{\,n}\,\right\|  \,=\,0$.\;By Theorem (\ref{th4}),
\[\left|\,T\,(\, x_{\,k} \,,\, b_{\,2} \,,\, \cdots \,,\, b_{\,n}\,) \,-\, T\,(\,x \,,\, b_{\,2} \,,\, \cdots \,,\, b_{\,n}\,) \,\right|\, \,\leq\, \|\, T \,\|\, \left\|\, x_{\,k} \,-\, x \,,\, b_{\,2} \,,\, \cdots \,,\, b_{\,n}\,\right\|\]
\[\Rightarrow\, \lim\limits_{k \to \infty}\,\left|\,T\,(\, x_{\,k} \,,\, b_{\,2} \,,\, \cdots \,,\, b_{\,n}\,) \,-\, T\,(\,x \,,\, b_{\,2} \,,\, \cdots \,,\, b_{\,n}\,) \,\right|\hspace{2cm}\]
\[\hspace{4cm} \,\leq\, \|\, T \,\|\, \lim\limits_{k \to \infty}\,\left\|\, x_{\,k} \,-\, x \,,\, b_{\,2} \,,\, \cdots \,,\, b_{\,n}\,\right\|\]
\[ \Rightarrow\, \lim\limits_{k \to \infty}\,\left|\,T\,(\, x_{\,k} \,,\, b_{\,2} \,,\, \cdots \,,\, b_{\,n}\,) \,-\, T\,(\,x \,,\, b_{\,2} \,,\, \cdots \,,\, b_{\,n}\,) \,\right| \,=\, 0 .\hspace{1cm}\] 
Therefore \,$\left\{\,T\,(\,x_{\,k} \,,\, b_{\,2} \,,\, \cdots \,,\, b_{\,n}\,) \,\right\}$\, converges to \,$T\,(\,x \,,\, b_{\,2} \,,\, \cdots \,,\, b_{\,n}\,)$\, in \,$\mathbb{K}$.\;Hence, \,$T$\, is \,$b$-sequentially continuous.
\end{proof}

\begin{definition}
A b-linear functional \,$T \,:\, X \,\times\, \left<\,b_{\,2}\,\right> \,\times \cdots \,\times\, \left<\,b_{\,n}\,\right> \,\to\, \mathbb{K}$\, is said to be continuous at \,$x_{\,0} \,\in\, X $\, if for any open ball \,$B\,\left(\,T\,(\, x_{\,0} \,,\, b_{\,2} \,,\, \cdots \,,\, b_{\,n}\,) \,,\, \,\epsilon\, \,\right)$\ in \,$\mathbb{K} \,,\, \exists$\, an open ball \,$B_{\,\{\,e_{\,2} \,,\, \cdots \,,\, e_{\,n}\,\}}\,(\, x_{\,0} \,,\, \delta \,)$\, in \,$X$\, such that
\[T\,\left(\, B_{\,\{\,e_{\,2} \,,\, \cdots \,,\, e_{\,n}\,\}}\,(\, x_{\,0} \,,\, \delta \,) \,,\, b_{\,2} \,,\, \cdots \,,\, b_{\,n}\,\right) \,\subseteq\, B\, \left(\, T\,(\, x_{\,0} \,,\, b_{\,2} \,,\, \cdots \,,\, b_{\,n}\,) \,,\, \epsilon \,\right).\]
Equivalently, for a given \,$\epsilon \,>\, 0$, there exist some \,$e_{\,2},\, \cdots,\, e_{\,n} \,\in\, X$\, and \,$\delta \,>\, 0$\, such that for \,$x \,\in\, X$,
\[ \left\|\, x \,-\, x_{\,0} \,,\, e_{\,2} \,,\, \cdots \,,\, e_{\,n}\,\right\| \,<\, \delta \,\Rightarrow\, \left|\, T\,(\, x \,,\, b_{\,2} \,,\, \cdots \,,\, b_{\,n}\,) \,-\, T\,(\, x_{\,0} \,,\, b_{\,2} \,,\, \cdots \,,\, b_{\,n}\,)\,\right| \,<\, \epsilon.\]
\end{definition}

\begin{theorem}
If a \,$b$-linear functional \,$T \, :\, X \,\times\, \left<\,b_{\,2}\,\right> \,\times\, \cdots \,\times\, \left<\,b_{\,n}\,\right> \,\to\, \mathbb{K}$\, is continuous at \,$0$\, then it is continuous on the whole space \,$X$.
\end{theorem}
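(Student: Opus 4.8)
The plan is to exploit the linearity of \,$T$\, in its first argument and to reduce continuity at an arbitrary point to the hypothesised continuity at \,$0$\, by a simple translation.\;First I would record that, since \,$T$\, is \,$b$-linear, \,$T\,(\,0 \,,\, b_{\,2} \,,\, \cdots \,,\, b_{\,n}\,) \,=\, 0$.\;Hence the hypothesis, read through the \,$\epsilon$-$\delta$ formulation in the preceding definition, unwinds to the following statement: for every \,$\epsilon \,>\, 0$\, there exist \,$e_{\,2},\, \cdots,\, e_{\,n} \,\in\, X$\, and \,$\delta \,>\, 0$\, such that
\[\left\|\, x \,,\, e_{\,2} \,,\, \cdots \,,\, e_{\,n}\,\right\| \,<\, \delta \,\Rightarrow\, \left|\, T\,(\, x \,,\, b_{\,2} \,,\, \cdots \,,\, b_{\,n}\,)\,\right| \,<\, \epsilon.\]

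Next, I would fix an arbitrary \,$x_{\,0} \,\in\, X$\, and a target \,$\epsilon \,>\, 0$, and take \emph{exactly} the auxiliary vectors \,$e_{\,2},\, \cdots,\, e_{\,n}$\, and the radius \,$\delta$\, furnished above for this same \,$\epsilon$, claiming that they witness continuity at \,$x_{\,0}$.\;Indeed, for any \,$x \,\in\, X$\, with \,$\left\|\, x \,-\, x_{\,0} \,,\, e_{\,2} \,,\, \cdots \,,\, e_{\,n}\,\right\| \,<\, \delta$, put \,$z \,=\, x \,-\, x_{\,0}$; then \,$\left\|\, z \,,\, e_{\,2} \,,\, \cdots \,,\, e_{\,n}\,\right\| \,<\, \delta$, so by the unwound hypothesis \,$\left|\, T\,(\, z \,,\, b_{\,2} \,,\, \cdots \,,\, b_{\,n}\,)\,\right| \,<\, \epsilon$.\;Linearity in the first slot gives \,$T\,(\, x \,,\, b_{\,2} \,,\, \cdots \,,\, b_{\,n}\,) \,-\, T\,(\, x_{\,0} \,,\, b_{\,2} \,,\, \cdots \,,\, b_{\,n}\,) \,=\, T\,(\, x \,-\, x_{\,0} \,,\, b_{\,2} \,,\, \cdots \,,\, b_{\,n}\,) \,=\, T\,(\, z \,,\, b_{\,2} \,,\, \cdots \,,\, b_{\,n}\,)$, whence the left-hand modulus is \,$<\, \epsilon$.\;This is precisely continuity at \,$x_{\,0}$, and since \,$x_{\,0}$\, was arbitrary, \,$T$\, is continuous on all of \,$X$.

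The only point requiring care — and the nearest thing to an obstacle — is bookkeeping peculiar to the \,$n$-normed setting: the open balls here are indexed by the auxiliary vectors \,$e_{\,2},\, \cdots,\, e_{\,n}$, so I must ensure that the \emph{same} family \,$e_{\,2},\, \cdots,\, e_{\,n}$\, (and the same \,$\delta$) produced at \,$0$\, is transported unchanged to \,$x_{\,0}$.\;Because the translation \,$x \,\mapsto\, x \,-\, x_{\,0}$\, leaves the quantity \,$\left\|\, \cdot \,,\, e_{\,2} \,,\, \cdots \,,\, e_{\,n}\,\right\|$\, of first-slot differences untouched, no readjustment of these vectors is needed and the estimate transfers verbatim.\;Note that neither completeness nor boundedness of \,$T$\, is invoked; the argument uses only the linearity of \,$T$\, in its first slot together with the translation behaviour of the induced \,$n$-norm.
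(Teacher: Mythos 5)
Your proof is correct and is essentially the same argument as the paper's: both fix an arbitrary $x_{\,0}$, take the ball $B_{\,\{\,e_{\,2},\,\cdots,\,e_{\,n}\,\}}\,(\,0,\,\delta\,)$ furnished by continuity at $0$ together with $T\,(\,0,\,b_{\,2},\,\cdots,\,b_{\,n}\,) \,=\, 0$, and translate it to $x_{\,0}$ via the identity $T\,(\,x,\,b_{\,2},\,\cdots,\,b_{\,n}\,) \,-\, T\,(\,x_{\,0},\,b_{\,2},\,\cdots,\,b_{\,n}\,) \,=\, T\,(\,x \,-\, x_{\,0},\,b_{\,2},\,\cdots,\,b_{\,n}\,)$ with the same $e_{\,2},\,\cdots,\,e_{\,n}$ and $\delta$. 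The only difference is cosmetic: you phrase it in the $\epsilon$--$\delta$ form while the paper uses the set-inclusion form with open balls.
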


\begin{proof}
Let \,$x_{\,0} \,\in\, X$\, be arbitrary.\;Since \,$T$\, is continuous at \,$0$, for any open ball \,$ B\,(\, 0 \,,\, \epsilon \,)$\, in \,$\mathbb{K}$, we can find an open ball \,$B_{\,\{\,e_{\,2} \,,\, \cdots \,,\, e_{\,n}\,\}}\,(\, 0 \,,\, \delta \,)$\, in \,$X$\, such that 
\[ T\,\left(\,B_{\,\{\,e_{\,2} \,,\, \cdots \,,\, e_{\,n}\,\}}\,(\, 0 \,,\, \delta \,) \,,\, b_{\,2} \,,\, \cdots \,,\, b_{\,n}\,\right) \,\subseteq\, B\,(\,T\,(\, 0 \,,\, b_{\,2} \,,\, \cdots \,,\, b_{\,n}\,) \,,\, \epsilon \,) \,=\, B\,(\, 0 \,,\, \epsilon \,)\]
\[\hspace{6.5cm} \;[\;\text{since}\; T\,(\, 0 \,,\, b_{\,2} \,,\, \cdots \,,\, b_{\,n}\,) \,=\, 0\;] \] 
Now, if \,$ x \,-\, x_{\,0} \,\in\, B_{\,\{\,e_{\,2} \,,\, \cdots \,,\, e_{\,n}\,\}}\,(\, 0 \,,\, \delta \,)$, then we have 
\[ T\,(\, x \,,\, b_{\,2} \,,\, \cdots \,,\, b_{\,n}\,) \,-\, T\,(\, x_{\,0} \,,\, b_{\,2} \,,\, \cdots \,,\, b_{\,n}\,) \,=\, T\,(\,x \,-\, x_{\,0} \,,\, b_{\,2} \,,\, \cdots \,,\, b_{\,n}\,) \,\in\, B\,(\, 0 \,,\, \epsilon \,) \]Thus, if \,$x \,\in\, x_{\,0} \,+\, B_{\,\{\,e_{\,2} \,,\, \cdots \,,\, e_{\,n}\,\}}\,(\, 0 \,,\, \delta \,) \,=\, B_{\,\{\,e_{\,2} \,,\, \cdots \,,\, e_{\,n}\,\}}\,(\; x_{\,0} \,,\, \delta \,)$, then 
\[ T\,(\, x \,,\, b_{\,2} \,,\, \cdots \,,\, b_{\,n}\,) \,\in\, T\,(\, x_{\,0} \,,\, b_{\,2} \,,\, \cdots \,,\, b_{\,n}\,) \,+\, B\,(\, 0 \,,\, \epsilon \,) \,=\, B\,\left(\, T\,(\, x_{\,0} \,,\, b_{\,2} \,,\, \cdots \,,\, b_{\,n}\,) \,,\, \epsilon \,\right).\] 
\[\Rightarrow T\,\left(\, B_{\,\{\,e_{\,2} \,,\, \cdots \,,\, e_{\,n}\,\}}\,(\, x_{\,0} \,,\, \delta \,) \,,\, b_{\,2} \,,\, \cdots \,,\, b_{\,n}\,\right) \,\subseteq\, B\, \left(\, T\,(\, x_{\,0} \,,\, b_{\,2} \,,\, \cdots \,,\, b_{\,n}\,) \,,\, \epsilon \,\right).\]
Since \,$x_{\,0}$\, is arbitrary element of \,$X$, \,$T$\, is continuous on \,$X$.
\end{proof}

\begin{theorem}
Let \,$T$\, be a b-linear functional defined on \;$X \,\times\, \left<\,b_{\,2}\,\right> \,\times\, \cdots \,\times\, \left<\,b_{\,n}\,\right>$.\;Then \,$T$\, is continuous on \,$X$\, if and only if it is b-sequentially continuous.
\end{theorem}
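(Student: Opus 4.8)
The plan is to prove the two implications separately, handling continuity and $b$-sequential continuity pointwise at an arbitrary $x \in X$, since continuity on $X$ means continuity at each point and likewise for sequential continuity. The forward implication (continuity $\Rightarrow$ $b$-sequential continuity) should be essentially routine, while the converse is where the real work lies, and I would attack it by contraposition.

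For the forward direction, I would fix $x$ together with a sequence $\{x_{\,k}\}$ converging to $x$ in the sense of the convergence definition, and aim to show $T(\,x_{\,k}\,,\,b_{\,2}\,,\,\cdots\,,\,b_{\,n}\,) \to T(\,x\,,\,b_{\,2}\,,\,\cdots\,,\,b_{\,n}\,)$. Given $\epsilon > 0$, continuity of $T$ at $x$ furnishes elements $e_{\,2},\,\cdots,\,e_{\,n} \in X$ and $\delta > 0$ such that $\|\,u - x\,,\,e_{\,2}\,,\,\cdots\,,\,e_{\,n}\,\| < \delta$ forces $|\,T(\,u\,,\,b_{\,2}\,,\,\cdots\,,\,b_{\,n}\,) - T(\,x\,,\,b_{\,2}\,,\,\cdots\,,\,b_{\,n}\,)\,| < \epsilon$. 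The key observation is that convergence of $\{x_{\,k}\}$ is convergence in \emph{every} direction, so in particular $\|\,x_{\,k} - x\,,\,e_{\,2}\,,\,\cdots\,,\,e_{\,n}\,\| \to 0$; hence for all large $k$ the points $x_{\,k}$ lie in $B_{\,\{\,e_{\,2}\,,\,\cdots\,,\,e_{\,n}\,\}}(\,x\,,\,\delta\,)$, and the desired estimate follows. This reproduces the classical $\epsilon$--$\delta$ versus sequential argument with no essential complication.

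For the converse I would assume $T$ fails to be continuous at some $x_{\,0}$ and try to build a sequence converging to $x_{\,0}$ along which $T$ does not converge. Negating the continuity definition yields a fixed $\epsilon_{\,0} > 0$ such that for every choice of $e_{\,2},\,\cdots,\,e_{\,n}$ and every $\delta > 0$ the ball $B_{\,\{\,e_{\,2}\,,\,\cdots\,,\,e_{\,n}\,\}}(\,x_{\,0}\,,\,\delta\,)$ contains a point whose $T$-value is at least $\epsilon_{\,0}$ away from $T(\,x_{\,0}\,,\,b_{\,2}\,,\,\cdots\,,\,b_{\,n}\,)$. The natural construction is to take $e_{\,j} = b_{\,j}$ and $\delta = 1/k$, producing $x_{\,k}$ with $\|\,x_{\,k} - x_{\,0}\,,\,b_{\,2}\,,\,\cdots\,,\,b_{\,n}\,\| < 1/k$ yet $|\,T(\,x_{\,k}\,,\,b_{\,2}\,,\,\cdots\,,\,b_{\,n}\,) - T(\,x_{\,0}\,,\,b_{\,2}\,,\,\cdots\,,\,b_{\,n}\,)\,| \geq \epsilon_{\,0}$; equivalently, by linearity, $z_{\,k} := x_{\,k} - x_{\,0}$ satisfies $\|\,z_{\,k}\,,\,b_{\,2}\,,\,\cdots\,,\,b_{\,n}\,\| \to 0$ while $|\,T(\,z_{\,k}\,,\,b_{\,2}\,,\,\cdots\,,\,b_{\,n}\,)\,| \geq \epsilon_{\,0}$.

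The hard part will be precisely this last step: the negation of continuity only delivers closeness of $x_{\,k}$ to $x_{\,0}$ in the single direction $b_{\,2},\,\cdots,\,b_{\,n}$, whereas $b$-sequential continuity is tested against sequences converging to $x_{\,0}$ in \emph{all} directions $a_{\,2},\,\cdots,\,a_{\,n} \in X$. Upgrading one-direction control to full $n$-normed convergence $x_{\,k} \to x_{\,0}$ is the delicate point, and I do not expect rescaling $z_{\,k}$ to help, since a scalar factor acts uniformly on $\|\,z_{\,k}\,,\,a_{\,2}\,,\,\cdots\,,\,a_{\,n}\,\|$ in every direction at once and cannot simultaneously fix the uncontrolled directions while keeping $|\,T(\,z_{\,k}\,,\,b_{\,2}\,,\,\cdots\,,\,b_{\,n}\,)\,|$ bounded below. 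Consequently, I anticipate that the argument must either restrict the test directions to the fixed $b_{\,j}$ (drawing the contradiction directly against convergence and $T$-evaluation in the $b$-direction) or lean on extra structure such as finite dimensionality in the spirit of Theorem \ref{thm2}; once a genuine all-direction convergent sequence with $|\,T(\,x_{\,k}\,,\,b_{\,2}\,,\,\cdots\,,\,b_{\,n}\,) - T(\,x_{\,0}\,,\,b_{\,2}\,,\,\cdots\,,\,b_{\,n}\,)\,| \geq \epsilon_{\,0}$ is secured, it contradicts $b$-sequential continuity and the proof closes.
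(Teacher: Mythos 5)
Your forward direction is complete and coincides with the paper's: continuity at $x$ produces a ball $B_{\{e_{2},\ldots,e_{n}\}}(x,\delta)$ whose image lies in the target ball in $\mathbb{K}$, and since convergence of $\{x_{k}\}$ to $x$ is convergence in \emph{every} direction, in particular $\|x_{k}-x,e_{2},\ldots,e_{n}\|\to 0$, so the tail of the sequence enters that ball and the estimate follows. Nothing to add there.

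The converse is where your proposal stops short, and you have put your finger on exactly the right spot. Negating continuity at $x$ and choosing $\delta_{k}\downarrow 0$ yields, for a \emph{fixed} tuple $e_{2},\ldots,e_{n}$, points $x_{k}$ with $\|x_{k}-x,e_{2},\ldots,e_{n}\|<\delta_{k}$ and $|T(x_{k},b_{2},\ldots,b_{n})-T(x,b_{2},\ldots,b_{n})|\geq\epsilon$; but to contradict $b$-sequential continuity one needs $x_{k}\to x$ in the $n$-norm sense, i.e.\ $\|x_{k}-x,a_{2},\ldots,a_{n}\|\to 0$ for \emph{every} $a_{2},\ldots,a_{n}\in X$, whereas the construction controls only the one direction $e_{2},\ldots,e_{n}$. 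You declined to make that leap; the paper makes it silently. Its proof of the converse is precisely the construction you describe, ending with the unargued assertion that ``$x_{k}\to x$ as $k\to\infty$ in $X$,'' with no step upgrading one-direction smallness to all-direction convergence, and neither of the repairs you float (testing convergence only against the fixed tuple, or invoking finite dimensionality as in Theorem \ref{thm2}) is taken there. So your proposal is incomplete as a proof of the stated equivalence, but the missing step you identify is a genuine gap that the paper's own argument does not close either.
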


\begin{proof}
Suppose that \,$T \,:\, X \,\times\, \left<\,b_{\,2}\,\right> \,\times\, \cdots \,\times\, \left<\,b_{\,n}\,\right> \,\to\, \mathbb{K}$\, is continuous at \,$x \,\in\, X $.\;Then for any open ball \,$B\,\left(\, T\,(\, x \,,\, b_{\,2} \,,\, \cdots \,,\, b_{\,n}\,) \,,\, \epsilon \,\right)$\, in \,$\mathbb{K}$, we can find an open ball \,$B_{\,\{\,e_{\,2} \,,\, \cdots \,,\, e_{\,n}\,\}}\,(\, x \,,\, \delta \,)$\; in \;$X$\, such that 
\begin{equation}\label{eq4}
T\,\left(\, B_{\,\{\,e_{\,2} \,,\, \cdots \,,\, e_{\,n}\,\}}\,(\, x \,,\, \delta \,) \,,\, b_{\,2} \,,\, \cdots \,,\, b_{\,n}\,\right) \,\subseteq\, B\, \left(\, T\,(\, x \,,\, b_{\,2} \,,\, \cdots \,,\, b_{\,n}\,) \,,\, \epsilon \,\right).
\end{equation} 
Let \,$\{\,x_{\,k}\,\}$\; be any sequence in \,$X$\, such that \,$x_{\,k} \,\to\, x $\, as \,$k \,\to\, \infty$.\;Then for the open ball \,$B_{\,\{\,e_{\,2} \,,\, \cdots \,,\, e_{\,n}\,\}}\,(\, x \,,\,\delta \,)$, there exists a natural number \,$K$\, such that \,$ x_{\,k} \,\in\, B_{\,\{\,e_{\,2} \,,\, \cdots \,,\, e_{\,n}\,\}}\,(\, x \,,\, \delta \,)\; \;\forall\; k \,\geq\, K $. 
Now from (\ref{eq4}), it follows that
\[ T\,(\, x_{\,k} \,,\, b_{\,2} \,,\, \cdots \,,\, b_{\,n}\,) \,\in\, B\,\left(\, T\,(\, x \,,\, b_{\,2} \,,\, \cdots \,,\, b_{\,n}\,) \,,\, \epsilon \,\right)\; \;\forall\; k \,\geq\, K \]
\[\Rightarrow\, \left|\, T\,(\, x_{\,k} \,,\, b_{\,2} \,,\, \cdots \,,\, b_{\,n}\,) \,-\, T\,(\, x \,,\, b_{\,2} \,,\, \cdots \,,\, b_{\,n}\,) \,\right| \,<\, \epsilon\; \;\forall\; k \,\geq\, K.\] Since \,$B\,\left(\, T\,(\, x \,,\, b_{\,2} \,,\, \cdots \,,\, b_{\,n}\,) \,,\, \epsilon \,\right)$\, is an arbitrary open ball in \,$\mathbb{K}$, it follows that \,$ T\,(\,x_{\,k} \,,\, b_{\,2} \,,\, \cdots \,,\, b_{\,n}\,) \,\to\, T\,(\, x \,,\, b_{\,2} \,,\, \cdots \,,\, b_{\,n}\,)$\, as \,$k \,\to\, \infty$\,.\;Hence, \,$T$\, is \,$b$-sequentially continuous on \,$X$.\\

Conversely, suppose that \,$T$\, is \,$b$-sequentially continuous on\,$X$\,.\;If possible suppose that \,$T$\, is not continuous at \,$x \,\in\, X$.\;Then there exist atleast one \,$\epsilon \,>\, 0$\, such that for all \,$\delta \,>\, 0$\, and for \,$e_{\,2},\, \cdots,\, e_{\,n} \,\in\, X$, \,$ \left\|\, x \,-\, x_{\,0} \,,\, e_{\,2} \,,\, \cdots \,,\, e_{\,n}\,\right\| \,<\, \delta\, $\, but
\[\left|\, T\,(\, x \,,\, b_{\,2} \,,\, \cdots \,,\, b_{\,n}\,) \,-\, T\,(\, x_{\,0} \,,\, b_{\,2} \,,\, \cdots \,,\, b_{\,n}\,)\,\right| \,\geq\, \epsilon\; \;\text{for atleast one}\; \;x_{\,0} \,\in\, X.\]
Let \,$\left\{\,\delta_{\,k}\,\right\}$\, be a decreasing sequence of real number such that \,$\delta_{\,k} \,\to\, 0\; \;\text{as}\; \;k \,\to\, \infty$.\;So, corresponding to each \,$\delta_{\,k}$, there exists \,$x_{\,k} \,\in\, X$\, such that
\[\left\|\, x_{\,k} \,-\, x \,,\, e_{\,2} \,,\, \cdots \,,\, e_{\,n}\,\right\| \,<\, \delta_{\,k}\; \;\text{but}\; \left|\, T\,(\, x_{\,k} \,,\, b_{\,2} \,,\, \cdots \,,\, b_{\,n}\,) \,-\, T\,(\, x \,,\, b_{\,2} \,,\, \cdots \,,\, b_{\,n}\,)\,\right| \,\geq\, \epsilon.\]
This implies \,$ T\,(\,x_{\,k} \,,\, b_{\,2} \,,\, \cdots \,,\, b_{\,n}\,) \,\nrightarrow\, T\,(\, x \,,\, b_{\,2} \,,\, \cdots \,,\, b_{\,n}\,)$\, in \,$\mathbb{K}$\, although \,$x_{\,k} \,\to\, x$\, as \,$k \,\to\, \infty$\, in \,$X$, which is a contradiction.\;Hence, \,$T$\, must be continuous at the point \,$x$.This completes the proof.
\end{proof}

\begin{theorem}
Let \,$X$\, be a finite dimensional linear n-normed space.\;Then every b-linear functional defined on \,$ X \,\times\, \left<\,b_{\,2}\,\right> \,\times\, \cdots \,\times\, \left<\,b_{\,n}\,\right>$\; is b-sequentially continuous.
\end{theorem}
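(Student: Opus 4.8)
The plan is to exploit the fact that $T$ is linear in its first slot together with the finite dimensionality of $X$, reducing the assertion to the classical equivalence of norms. Write $m = \dim X$ (necessarily $m \ge n$, otherwise the $n$-norm is identically zero and the statement degenerates) and fix a basis $\{u_1, \ldots, u_m\}$ of $X$. By conditions (I) and (II) the map $x \mapsto T(x, b_2, \ldots, b_n)$ is a linear functional on $X$, so for $x = \sum_{i=1}^m \alpha_i u_i$ one has $T(x, b_2, \ldots, b_n) = \sum_{i=1}^m \alpha_i\, c_i$, where $c_i = T(u_i, b_2, \ldots, b_n) \in \mathbb{K}$ are fixed scalars. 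Consequently, if $x_k = \sum_i \alpha_i^{(k)} u_i \to x = \sum_i \alpha_i u_i$ and I can show that the coordinates converge, $\alpha_i^{(k)} \to \alpha_i$ for every $i$, then $T(x_k, b_2, \ldots, b_n) = \sum_i \alpha_i^{(k)} c_i \to \sum_i \alpha_i c_i = T(x, b_2, \ldots, b_n)$, which is exactly $b$-sequential continuity. I deliberately do not route this through Theorem \ref{th5}: a general $b$-linear functional on a finite-dimensional space need not be bounded in the $n$-norm sense (it may fail to vanish on linearly dependent tuples), so a direct argument is required.

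Everything thus comes down to showing that $n$-norm convergence forces coordinate convergence. To capture this I would introduce the auxiliary one-variable function
\[
\|x\|_{\ast} \,=\, \sum_{1 \,\le\, i_1 \,<\, \cdots \,<\, i_{n-1} \,\le\, m} \left\|\, x \,,\, u_{i_1} \,,\, \cdots \,,\, u_{i_{n-1}} \,\right\|,
\]
the sum running over all $(n-1)$-element subsets of the basis. Properties (III) and (IV) of the $n$-norm show termwise that $\|\cdot\|_{\ast}$ is absolutely homogeneous and subadditive, while property (I) yields positive-definiteness: if $\|x\|_{\ast} = 0$ then $x$ is linearly dependent with every $(n-1)$-subset of the basis, which forces $x = 0$, since a non-zero coordinate $\alpha_j$ would make $x$ together with any $n-1$ basis vectors omitting $u_j$ linearly independent. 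Hence $\|\cdot\|_{\ast}$ is a genuine norm on $X$.

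Finally, the definition of convergence gives $\left\|\, x_k - x \,,\, a_2 \,,\, \cdots \,,\, a_n \,\right\| \to 0$ for all $a_2, \ldots, a_n \in X$; specializing the $a_i$ to basis vectors makes each summand of $\|x_k - x\|_{\ast}$ vanish in the limit, so $\|x_k - x\|_{\ast} \to 0$. Since $X$ is finite dimensional, $\|\cdot\|_{\ast}$ is equivalent to the coordinate maximum norm, whence $\alpha_i^{(k)} \to \alpha_i$ for every $i$ and the reduction of the first paragraph applies. The main obstacle I anticipate is precisely the verification that $\|\cdot\|_{\ast}$ is positive-definite (and, relatedly, the tacit requirement $m \ge n$); once that is in hand, the passage to coordinate convergence and the conclusion are routine.
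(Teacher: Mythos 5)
Your proof is correct and follows essentially the same route as the paper's: expand $x_k$ and $x$ in a basis, show the coordinates converge, and conclude by linearity of $T$ in its first slot. The one genuine difference is that the paper simply cites Theorem \ref{thm2} for the key step that $n$-norm convergence forces coordinate convergence, whereas you prove it from scratch by building the auxiliary norm $\|x\|_{\ast} = \sum \left\|\,x,\, u_{i_1},\, \cdots,\, u_{i_{n-1}}\,\right\|$ and invoking equivalence of norms in finite dimensions; your positive-definiteness argument (a nonzero coordinate $\alpha_{j}$ makes $x$ independent of any $(n-1)$-subset of the basis omitting $u_{j}$, which exists since $m \geq n$) is sound, so your version is self-contained where the paper's is not. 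Your side remark that a $b$-linear functional need not be bounded (it may fail to vanish on tuples that are linearly dependent with $b_2, \ldots, b_n$), so one cannot shortcut via Theorem \ref{th5}, is also a correct and worthwhile observation.
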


\begin{proof}
Let \,$X$\, be a finite dimensional linear\;$n$-normed space with \;$ \text{dim}\,X \,=\, d \,\geq\, n$\, and \,$T \,:\, X \,\times\, \left<\,b_{\,2}\,\right> \,\times\, \cdots \,\times\, \left<\,b_{\,n}\,\right> \,\to\, \mathbb{K}$\, be a \,$b$-linear functional.\;If \,$ X \,=\, \{\,0\,\}$, proof is obvious.\;Suppose that \,$ X \,\neq\, \{\,0\,\}$.\;Let \;$\left\{\, e_{\,1} \,,\, e_{\,2} \,,\, \,\cdots\, e_{\,d}\,\right\}$\; be a basis for \,$X$\, and \,$\{\,x_{\,k}\,\}$\; be a sequence in \,$X$\, with \;$ x_{\,k} \,\to\, x $\; as \,$k \,\to\, \infty$.\;We can write 
\[ x_{\,k} \,=\, \sum\limits^{\,d}_{j \,=\, 1}\,a_{\,k \,,\, j}\,e_{\,j}, \;\&\;\; \;x \,=\, \sum\limits^{\,d}_{j \,=\, 1}\,a_{\,j}\,e_{\,j}, \;\text{where for each}\; \,j, \;a_{\,k \,,\, j},\, a_{\,j} \,\in\, \mathbb{R}.\]   
In Theorem (\ref{thm2}), it has been shown that \;$ a_{\,k \,,\, j} \,\to\, a_{\,j}$\; as \,$k \,\to\, \infty$\, for all \,$\,j$.
\[\text{Now,}\hspace{.5cm} T\,(\, x_{\,k} \,,\, b_{\,2} \,,\, \cdots \,,\, b_{\,n}\,) \,=\, T\,\left(\,\sum\limits^{\,d}_{j \,=\, 1}\,a_{\,k \,,\, j}\,e_{\,j} \;,\; b_{\,2} \,,\, \cdots \,,\, b_{\,n}\,\right)\hspace{3cm}\]
\[ \,=\, \sum\limits^{\,d}_{j \,=\, 1}\,a_{\,k \,,\, j}\,T\,(\, e_{\,j} \,,\, b_{\,2} \,,\, \cdots \,,\, b_{\,n}\,) \,\to\, \sum\limits^{\,d}_{j \,=\, 1}\,a_{\,j}\,T\,(\, e_{\,j} \,,\, b_{\,2} \,,\, \cdots \,,\, b_{\,n}\,)\; \;\text{as}\; \;k \,\to\, \infty\]
\[ \,=\, T\,\left(\,\sum\limits^{\,d}_{j \,=\, 1}\,a_{\,j}\,e_{\,j} \;,\; b_{\,2} \,,\, \cdots \,,\, b_{\,n}\,\right) \,=\, T\,(\, x \,,\, b_{\,2} \,,\, \cdots \,,\, b_{\,n}\,).\hspace{3.2cm}\] 
Thus, if \,$ x_{\,k} \,\to\, x \,\Rightarrow\, T\,(\,x_{\,k} \,,\, b_{\,2} \,,\, \cdots \,,\, b_{\,n}\,) \,\to\, T\,(\, x \,,\, b_{\,2} \,,\, \cdots \,,\, b_{\,n}\,)$\, as \,$k \,\to\, \infty$. Hence, \,$T$\, is \,$b$-sequentially continuous.
\end{proof}

\section{Uniform Boundedness Principle and Hahn-Banach Extension Theorem in $n$-Banach space }

In this section, we derive Uniform Boundedness Principle and Hahn-Banach Extension Theorem for bounded \,$b$-linear functional in linear\;$n$-normed space.\;We give some examples and applications to illustrate the Uniform Boundedness Principle and Hahn-Banach Extension Theorem for bounded \,$b$-linear functional.

\begin{definition}
A set \,$\mathcal{A}$\; of bounded b-linear functionals defined on \,$ X \,\times\, \left<\,b_{\,2}\,\right> \,\times\, \cdots \,\times\, \left<\,b_{\,n}\,\right>$\; is said to be:
\begin{itemize}
\item[(I)]\;\; pointwise bounded if for each \,$x \,\in\, X $, the set \;$\left\{\,T\,(\, x \,,\, b_{\,2} \,,\, \cdots \,,\, b_{\,n}\,) \,:\, T \,\in\, \,\mathcal{A}\,\right\}$\; is a bounded set in \;$\mathbb{K}$.\;That is, 
\[ \left|\, T\,(\, x \,,\, b_{\,2} \,,\, \cdots \,,\, b_{\,n}\,)\,\right| \,\leq\, K\; \left\|\, x \,,\, b_{\,2} \,,\, \cdots \,,\, b_{\,n}\,\right\|\; \;\forall\; \;x \,\in\, X \;\;\&\;\; \forall\; \;T \,\in\, \mathcal{A}.\]
\item[(II)]\;\; uniformly bounded if, \,$\exists\; \,K \,>\, 0$\, such that \,$\|\,T \,\| \,\leq\, K\; \;\forall\; T \,\in\, \mathcal{A}$.
\end{itemize}
\end{definition}

\begin{theorem}
If a set \;$\mathcal{A}$\; of bounded b-linear functionals defined on \,$ X \,\times\, \left<\,b_{\,2}\,\right> \,\times\, \cdots \,\times\, \left<\,b_{\,n}\,\right>$\, is uniformly bounded then it is pointwise bounded set.
\end{theorem}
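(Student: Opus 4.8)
The plan is to deduce the pointwise bound directly from the uniform bound on the norms, using the basic estimate recorded in the Remark following the definition of a bounded $b$-linear functional. Recall that for any bounded $b$-linear functional $T$ one has $\left|\,T\,(\,x \,,\, b_{\,2} \,,\, \cdots \,,\, b_{\,n}\,)\,\right| \,\leq\, \|\,T\,\|\, \left\|\,x \,,\, b_{\,2} \,,\, \cdots \,,\, b_{\,n}\,\right\|$ for every $x \,\in\, X$. This is the only structural fact I would need, and everything else is just unwinding the two definitions involved.

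First I would invoke the hypothesis: since $\mathcal{A}$ is uniformly bounded, there exists a constant $K \,>\, 0$ with $\|\,T\,\| \,\leq\, K$ for every $T \,\in\, \mathcal{A}$. Then, fixing an arbitrary $x \,\in\, X$ and an arbitrary $T \,\in\, \mathcal{A}$, I would chain the two inequalities, replacing $\|\,T\,\|$ by its upper bound $K$, to obtain $\left|\,T\,(\,x \,,\, b_{\,2} \,,\, \cdots \,,\, b_{\,n}\,)\,\right| \,\leq\, \|\,T\,\|\, \left\|\,x \,,\, b_{\,2} \,,\, \cdots \,,\, b_{\,n}\,\right\| \,\leq\, K\, \left\|\,x \,,\, b_{\,2} \,,\, \cdots \,,\, b_{\,n}\,\right\|$. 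Since $x$ and $T$ were arbitrary, this inequality holds simultaneously for all $x \,\in\, X$ and all $T \,\in\, \mathcal{A}$, which is exactly the defining condition for $\mathcal{A}$ to be pointwise bounded.

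There is essentially no obstacle here: the implication is a one-line consequence of the defining inequality for the norm of a bounded $b$-linear functional together with the uniform bound, and the same constant $K$ serves both notions. The only point worth stating carefully is that the constant witnessing pointwise boundedness may be taken to be the very constant $K$ that witnesses uniform boundedness, which is immediate from the chained estimate above. (This is the easy direction; the converse, amounting to the genuine Uniform Boundedness Principle, is where the real work — the Baire category argument using Theorem \ref{th1} — will be required.)
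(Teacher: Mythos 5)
Your proposal is correct and coincides with the paper's own argument: both deduce pointwise boundedness by chaining $\left|\,T\,(\,x \,,\, b_{\,2} \,,\, \cdots \,,\, b_{\,n}\,)\,\right| \,\leq\, \|\,T\,\|\, \left\|\,x \,,\, b_{\,2} \,,\, \cdots \,,\, b_{\,n}\,\right\| \,\leq\, K\, \left\|\,x \,,\, b_{\,2} \,,\, \cdots \,,\, b_{\,n}\,\right\|$ with the uniform constant $K$. Nothing is missing.
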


\begin{proof}
Suppose \,$\mathcal{A}$\, uniformly bounded.\;Then there is a constant \,$K \,>\, 0$\, such that \,$\|\,T\,\| \,\leq\, K\; \;\forall\; T \,\in\, \mathcal{A}$.\;Let \,$x \;\in\; X$\, be given.\;Then for all \,$T \,\in\, \mathcal{A}$, 
\[|\,T\,(\, x \,,\, b_{\,2} \,,\, \cdots \,,\, b_{\,n}\,)\,| \,\leq\, \|\,T \,\|\, \left\| \,x \,,\, b_{\,2} \,,\, \cdots \,,\, b_{\,n}\,\right\| \,\leq\, K\,\left\|\, x \,,\, b_{\,2} \,,\, \cdots \,,\, b_{\,n}\,\right\|.\]
Hence, \,$\mathcal{A}$\, is poinwise bounded set in \,$\mathbb{K}$.
\end{proof}

\begin{theorem}\label{th6}
Let \,$X$\, be a n-Banach space over the field \,$\mathbb{K}$.\;If a set \,$\mathcal{A}$\, of bounded b-linear functionals on \,$ X \,\times\, \left<\,b_{\,2}\,\right> \,\times\, \cdots \,\times\, \left<\,b_{\,n}\,\right>$\, is pointwise bounded, then it is uniformly bounded.
\end{theorem}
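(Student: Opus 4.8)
The plan is to reproduce the classical Banach--Steinhaus argument, using the Baire category statement of Theorem \ref{th1} as the engine. For each positive integer \,$m$\, I would set
\[
A_{\,m} \,=\, \left\{\, x \,\in\, X \;:\; \left|\,T\,(\,x \,,\, b_{\,2} \,,\, \cdots \,,\, b_{\,n}\,)\,\right| \,\leq\, m \;\;\forall\; T \,\in\, \mathcal{A}\,\right\}.
\]
First I would check that each \,$A_{\,m}$\, is closed: if \,$x_{\,k} \,\in\, A_{\,m}$\, and \,$x_{\,k} \,\to\, x$, then by Theorem \ref{th5} every \,$T \,\in\, \mathcal{A}$\, is \,$b$-sequentially continuous, so \,$T\,(\,x_{\,k} \,,\, b_{\,2} \,,\, \cdots \,,\, b_{\,n}\,) \,\to\, T\,(\,x \,,\, b_{\,2} \,,\, \cdots \,,\, b_{\,n}\,)$\, and the estimate \,$\leq m$\, passes to the limit; an intersection of closed sets is closed. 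Next, the hypothesis of pointwise boundedness gives, for each \,$x$, a finite bound on \,$\left\{\,|\,T\,(\,x \,,\, b_{\,2} \,,\, \cdots \,,\, b_{\,n}\,)\,| \,:\, T \,\in\, \mathcal{A}\,\right\}$, so every \,$x$\, lies in some \,$A_{\,m}$\, and hence \,$X \,=\, \bigcup_{m} A_{\,m}$.

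The second step is to bring in completeness through Theorem \ref{th1}. If every \,$A_{\,m}$\, had empty interior, then each \,$U_{\,m} \,=\, X \,\setminus\, A_{\,m}$\, would be open and dense, so \,$\bigcap_{m} U_{\,m}$\, would be dense and in particular nonempty; but \,$\bigcap_{m} U_{\,m} \,=\, X \,\setminus\, \bigcup_{m} A_{\,m} \,=\, \emptyset$, a contradiction. Therefore some \,$A_{\,m_{\,0}}$\, contains an open ball \,$B_{\{\,e_{\,2} \,,\, \cdots \,,\, e_{\,n}\,\}}\,(\,x_{\,0} \,,\, r\,)$.

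The final step is to convert ``bounded on a ball'' into ``uniformly bounded norm.'' Writing any point of the ball as \,$x_{\,0} \,+\, z$\, with \,$\|\,z \,,\, e_{\,2} \,,\, \cdots \,,\, e_{\,n}\,\| \,<\, r$, linearity together with \,$x_{\,0} \,\in\, A_{\,m_{\,0}}$\, gives
\[
\left|\,T\,(\,z \,,\, b_{\,2} \,,\, \cdots \,,\, b_{\,n}\,)\,\right| \,\leq\, \left|\,T\,(\,x_{\,0} \,+\, z \,,\, b_{\,2} \,,\, \cdots \,,\, b_{\,n}\,)\,\right| \,+\, \left|\,T\,(\,x_{\,0} \,,\, b_{\,2} \,,\, \cdots \,,\, b_{\,n}\,)\,\right| \,\leq\, 2\,m_{\,0}
\]
for every \,$T \,\in\, \mathcal{A}$\, and every such \,$z$. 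In the principal case where the ball may be taken with \,$e_{\,i} \,=\, b_{\,i}$, rescaling an arbitrary \,$y$\, to \,$z \,=\, \tfrac{r}{2}\, y \,/\, \|\,y \,,\, b_{\,2} \,,\, \cdots \,,\, b_{\,n}\,\|$\, (for \,$\|\,y \,,\, b_{\,2} \,,\, \cdots \,,\, b_{\,n}\,\| \,\neq\, 0$) yields \,$|\,T\,(\,y \,,\, b_{\,2} \,,\, \cdots \,,\, b_{\,n}\,)\,| \,\leq\, \tfrac{4\,m_{\,0}}{r}\,\|\,y \,,\, b_{\,2} \,,\, \cdots \,,\, b_{\,n}\,\|$, whence \,$\|\,T\,\| \,\leq\, 4\,m_{\,0}/r$\, uniformly over \,$\mathcal{A}$.

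I expect the main obstacle to be exactly this last reconciliation. The ball produced by Baire is an object of the \,$n$-normed topology and is described through auxiliary vectors \,$e_{\,2} \,,\, \cdots \,,\, e_{\,n}$\, that need not coincide with the fixed vectors \,$b_{\,2} \,,\, \cdots \,,\, b_{\,n}$\, entering the functionals and their norm; the rescaling above then only controls \,$|\,T\,(\,y \,,\, b_{\,2} \,,\, \cdots \,,\, b_{\,n}\,)\,|$\, by \,$\|\,y \,,\, e_{\,2} \,,\, \cdots \,,\, e_{\,n}\,\|$\, rather than by \,$\|\,y \,,\, b_{\,2} \,,\, \cdots \,,\, b_{\,n}\,\|$, and these two seminorms are in general incomparable (for instance \,$\|\,b_{\,2} \,,\, e_{\,2} \,,\, \cdots \,,\, e_{\,n}\,\|$\, may be nonzero while \,$\|\,b_{\,2} \,,\, b_{\,2} \,,\, \cdots \,,\, b_{\,n}\,\| \,=\, 0$). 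The crux of a fully rigorous argument is therefore to guarantee that the interior ball can be taken relative to \,$b_{\,2} \,,\, \cdots \,,\, b_{\,n}$, or otherwise to bridge the two families of vectors; by contrast, the closedness, covering, and Baire steps are routine once the definitions are unwound.
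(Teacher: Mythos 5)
Your proposal follows exactly the paper's route: the closed sets $F_{\,k} \,=\, \{\,x \,:\, |\,T\,(\,x \,,\, b_{\,2} \,,\, \cdots \,,\, b_{\,n}\,)\,| \,\leq\, k \ \forall\, T \in \mathcal{A}\,\}$, closedness via $b$-sequential continuity (Theorem \ref{th5}), the covering $X = \bigcup_k F_{\,k}$ from pointwise boundedness, Baire category via Theorem \ref{th1} to extract a ball $B_{\,\{\,e_{\,2} \,,\, \cdots \,,\, e_{\,n}\,\}}\,(\,x_{\,0} \,,\, \delta\,) \subset F_{\,k_{\,0}}$, and the translate-and-rescale step giving a bound $2\,k_{\,0}/\delta$ on the ball centered at the origin. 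Up to constants this is the paper's argument verbatim.

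The obstacle you single out in your last paragraph is genuine, and you should know that the paper's own proof does not resolve it: it simply writes
\[
\|\,T\,\| \,=\, \sup\,\left\{\,|\,T\,(\,x \,,\, b_{\,2} \,,\, \cdots \,,\, b_{\,n}\,)\,| \,:\, x \,\in\, B_{\,\{\,e_{\,2} \,,\, \cdots \,,\, e_{\,n}\,\}}\,(\,0 \,,\, 1\,)\,\right\},
\]
silently replacing the unit ball of the seminorm $\|\,\cdot \,,\, b_{\,2} \,,\, \cdots \,,\, b_{\,n}\,\|$ (which is what appears in the definition of $\|\,T\,\|$) by the unit ball of $\|\,\cdot \,,\, e_{\,2} \,,\, \cdots \,,\, e_{\,n}\,\|$ produced by the Baire argument. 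As you observe, these two seminorms are in general incomparable, so the final inequality $\|\,T\,\| \leq 2\,k_{\,0}/\delta$ does not follow as written unless one can arrange $e_{\,i} = b_{\,i}$ (or otherwise bridge the two families). Your proposal is therefore at least as rigorous as the published proof, and your closing diagnosis identifies precisely the step that a fully correct argument would still need to supply.
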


\begin{proof}
For each positive integer \,$k$, we consider the set 
\[ F_{\,k} \,=\, \left \{\, x \,\in\, X \;:\; \left|\, T\,(\, x \,,\, b_{\,2} \,,\, \cdots \,,\, b_{\,n}\,) \,\right| \,\leq\, k, \;\forall\; T \,\in\, \mathcal{A} \,\right\}.\]
We now show that \,$F_{\,k}$\; is a closed subset of \,$X$.\;Let \,$x \,\in\, \overline{F_{k}} $\; and \,$\{\, x_{\,m} \,\}$\; be a sequence in \,$F_{\,k}$\, such that \,$ x_{\,m} \,\to\, x $\; as \,$ m \,\to\, \infty $.\;Then, \,$\left |\, T\,(\, x_{\,m} \,,\, b_{\,2} \,,\, \cdots \,,\, b_{\,n}\,) \,\right | \,\leq\, k\;\; \;\forall\; T \,\in\, \mathcal{A}$.\;By Theorem (\ref{th5}), \,$T$\, is \,$b$-sequentially continuous i\,.\,e, 
\[ \lim\limits_{m \,\to\, \infty} \left|\, T\,(\, x_{\,m} \,,\, b_{\,2} \,,\, \cdots \,,\, b_{\,n}\,) \,\right| \,=\, T\,(\, x \,,\, b_{\,2} \,,\, \cdots \,,\, b_{\,n}\,).\]
This shows that \,$|\; T\,(\, x \,,\, b_{\,2} \,,\, \cdots \,,\, b_{\,n}\,) \;| \,\leq\, k\;\;  \;\forall\; T \,\in\, \mathcal{A} \,\Rightarrow\, x \,\in\, F_{k} $\; and hence \,$F_{k}$\; becomes a closed subset of \,$X$\, for every \,$k$.\;Since \,$\mathcal{A}$\; is pointwise bounded, the set \;$\left\{\, T\,(\, x \,,\, b_{\,2} \,,\, \cdots \,,\, b_{\,n}\,) \;:\; T \,\in\, \mathcal{A} \,\right\}$\; is a bounded for each \,$ x \,\in\, X $.\;Thus we see that for each \,$x \,\in\, X$\; is in some \,$F_{k}$\; and therefore, \,$X \,=\, \,\bigcup\limits_{k \,=\, 1}^{\infty}\, F_{\,k}$.\;Since \,$X$\, is \,$n$-Banach space, by Theorem (\ref{th1}), \,$\exists\; \,k_{\,0} \,\in\, \mathbb{N}$\; such that \;$F_{k_{\,0}}$\; is not nowhere dense in \,$X$, i\,.\,e., \,$F_{k_{\,0}}$\; has nonempty interior.\;So, \,$\exists$\; a non-empty open ball \;$B_{\,\{\,e_{\,2} \,,\, \cdots \,,\, e_{\,n}\,\}}\,(\, x_{\,0} \,,\, \delta \,)$\; such that \,$B_{\,\{\,e_{\,2} \,,\, \cdots \,,\, e_{\,n}\,\}}\,(\, x_{\,0} \,,\, \delta \,) \,\subset\, F_{k_{\,0}}$, i\,.\,e., for all \,$T \,\in\, \mathcal{A}$,  
\[\left|\,T\,(\,x \,,\, b_{\,2} \,,\, \cdots \,,\, b_{\,n}\,)\,\right| \,\leq\, k_{\,0}\;\; \;\forall\; x \,\in\, B_{\,\{\,e_{\,2} \,,\, \cdots \,,\, e_{\,n}\,\}}\,(\, x_{\,0} \,,\, \delta \,).\]
\[\;\text{Equivalently,}\hspace{.3cm}\left |\, T\,\left(\,B_{\,\{\,e_{\,2} \,,\, \cdots \,,\, e_{\,n}\,\}}\,(\, x_{\,0} \,,\, \delta \,) \,,\, b_{\,2} \,,\, \cdots \,,\, b_{\,n}\,\right) \,\right | \,\leq\, k_{\,0}\;\; \;\forall\; T \,\in\, \mathcal{A}.\;\text{Now,}\hspace{1cm}\]
\[x_{\,0} \,+\, \delta\, B_{\,\{\,e_{\,2},\, \cdots,\, e_{\,n}\,\}}\,(\, 0,\, 1 \,) \,=\, \left\{\, x \,\in\, X \,:\, x \,=\, x_{\,0} \,+\, \delta\, a, \,a \,\in\, B_{\,\{\,e_{\,2},\, \cdots,\, e_{\,n}\,\}}\,(\, 0,\, 1 \,)\,\right\}\]
\[\,=\, \left\{\, x \,\in\, X \,:\, x \,=\, x_{\,0} \,+\, \delta\,\,a \,,\; \left\|\, a \,,\, e_{\,2} \,,\, \cdots \,,\, e_{\,n}\,\right\| \,<\ 1 \,\right\}\hspace{1.5cm}\]
\[ \;=\; \left \{\, x \,\in\, X \,:\, \left \|\, \dfrac{x \,-\, x_{\,0}}{\delta} \;,\; e_{\,2} \,,\, \cdots \,,\, e_{\,n}\,\,\right \| \,<\, 1 \,\right\}\hspace{2.7cm}\]
\[\hspace{1cm} \,=\, \left \{\, x \,\in\, X \,:\, \left\|\, x \,-\, x_{\,0} \,,\, e_{\,2} \,,\, \cdots \,,\, e_{\,n}\,\,\right \| \,<\, \delta \,\right\} \,=\, B_{\,\{\,e_{\,2} \,,\, \cdots \,,\, e_{\,n}\,\}}\,(\, x_{\,0} \,,\, \delta \,)\]
\[\Rightarrow\, B_{\,\{\,e_{\,2} \,,\, \cdots \,,\, e_{\,n}\,\}}\,(\, 0 \,,\, 1 \,) \,=\, \dfrac{B_{\,\{\,e_{\,2} \,,\, \cdots \,,\, e_{\,n}\,\}}\,(\, x_{\,0} \,,\, \delta \,) \,-\, x_{\,0}}{\delta}.\hspace{1.4cm}\]
Also, we have \,$\left|\,T\,(\, x_{\,0} \,,\, b_{\,2} \,,\, \cdots \,,\, b_{\,n}\,) \,\right| \,\leq\, k_{\,0}\; \;\;\forall\; \,T \,\in\, \mathcal{A}$.\;Now, for all \,$T \,\in\, \mathcal{A}$,
\[\left|\,T\,\left(\,B_{\,\{\,e_{\,2} \,,\, \cdots \,,\, e_{\,n}\,\}}\,(\,0 \,,\, 1 \,) \,,\, b_{\,2} \,,\, \cdots \,,\, b_{\,n}\,\right)\right|\]
\[ \,=\, \left |\,T\,\left(\,\dfrac{B_{\,\{\,e_{\,2} \,,\, \cdots \,,\, e_{\,n}\,\}}\,(\,x_{\,0} \,,\, \delta \,) \,-\, x_{\,0}}{\delta} \,,\, b_{\,2} \,,\, \cdots \,,\, b_{\,n}\,\right)\,\right|\]
\[ \,=\, \left|\,\dfrac{1}{\delta}\,T\, \left(\,B_{\,\{\,e_{\,2} \,,\, \cdots \,,\, e_{\,n}\,\}}\,(\,x_{\,0} \,,\, \delta \,) \,-\, x_{\,0} \,,\, b_{\,2} \,,\, \cdots \,,\, b_{\,n}\,\right)\,\right|\]
\[\,\leq\, \dfrac{1}{\delta}\,\left(\, \left|\,T\,\left(\, B_{\,\{\,e_{\,2},\, \cdots,\, e_{\,n}\,\}}\,(\, x_{\,0},\, \delta \,),\, b_{\,2},\, \cdots,\, b_{\,n}\,\right)\,\right | \,+\, |\,T\,(\, x_{\,0},\, b_{\,2},\, \cdots,\, b_{\,n}\,)\,|\, \right) \,\leq\, \dfrac{2 \,k_{\,0}}{\delta}.\hspace{.1cm}\]
\[\Rightarrow\, \left|\,T\,(\,x \,,\, b_{\,2} \,,\, \cdots \,,\, b_{\,n}\,) \,\right| \,\leq\, \dfrac{2\,k_{\,0}}{\delta} \;\; \;\forall\; x \,\in\, B_{\,\{\,e_{\,2} \,,\, \cdots \,,\, e_{\,n}\,\}}\,(\, 0 \,,\, 1 \,),\;\; \;\&\; \;\forall\; T \,\in\, \mathcal{A}\hspace{1.2cm}\]
\[\Rightarrow\, \|\, T\, \| \,=\, \sup\,\left\{\,|\,T\,(\,x \,,\, b_{\,2} \,,\, \cdots \,,\, b_{\,n}\,) \,| \,:\, x \,\in\, B_{\,\{\,e_{\,2} \,,\, \cdots \,,\, e_{\,n}\,\}}\,(\, 0 \,,\, 1 \,),\;\; \;\&\; \;\forall\; T \,\in\, \mathcal{A}\,\right\}\]
\[ \,\leq\, \dfrac{2\,k_{\,0}}{\delta}\; \;\forall\; T \,\in\, \mathcal{A}.\hspace{7.8cm}\]Hence, \,$\mathcal{A}$\, is uniformly bounded.
\end{proof}

\subsubsection{Example}

Let \,$X$\, denote the set of all polynomials, 
\[x\,(\,t\,) \,=\, a_{\,0} \,+\, a_{\,1}\,t \,+\, a_{\,2}\,t^{\,2} \,+\, \,\cdots\, \,+\, a_{\,m}\,t^{\,m},\; \left(\,\;\text{$a_{\,0},\,\cdots,\,a_{\,m}$\, are real}\,\right) \]
where \,$m \,\geq\, n$\, is not a fixed positive integer.\;Then \,$X$\, is a real linear space with respect to the addition of  polynomials and scalar multiplication of a polynomial.\;For each \,$i \,=\, 1,\, 2,\, \cdots,\, n$, let \,$x_{\,i}\,(\,t\,) \,=\, a^{\,i}_{\,0} \,+\, a^{\,i}_{\,1}\,t \,+\, a^{\,i}_{\,2}\,t^{\,2} \,+\, \,\cdots\, \,+\, a^{\,i}_{\,m}\,t^{\,m}$.\;Now, define 
\[\left\|\,x_{\,1},\, \cdots,\, x_{\,n}\,\right\|\]
\begin{equation}\label{eq2.1}
\,=\, \begin{cases}
\max\limits_{\,j}\,\left|\,a^{\,1}_{\,j}\,\right| \,\times\,\cdots\,\times\max\limits_{\,j}\,\left|\,a^{\,n}_{\,j}\right| & \text{if}\; x_{\,1},\, \cdots \,,\, x_{\,n} \;\text{are linearly independent,} \\ 0 & \text{if}\; x_{\,1},\, \cdots \,,\, x_{\,n} \;\text{are linearly dependent}. \end{cases}
\end{equation} 
Let \,$y_{\,1}\,(\,t\,) \,=\, b^{\,1}_{\,0} \,+\, b^{\,1}_{\,1}\,t \,+\, b^{\,1}_{\,2}\,t^{\,2} \,+\, \,\cdots\, \,+\, b^{\,1}_{\,m}\,t^{\,m} \,\in\, X$\, and \,$\alpha \,\in\, \mathbb{R}$.\;Now,
\[\left\|\,\alpha\,x_{\,1},\, \cdots,\, x_{\,n}\,\right\| \,=\, \max\limits_{\,j}\,\left|\,\alpha\,a^{\,1}_{\,j}\,\right|\times\,\cdots\,\times\,\max\limits_{\,j}\,\left|\,a^{\,n}_{\,j}\right| \,=\, |\,\alpha\,|\,\left\|\,x_{\,1},\, \cdots,\, x_{\,n}\,\right\|,\]
\[\;\text{and}\hspace{.3cm} \left\|\,x_{\,1} \,+\, y_{\,1},\, \cdots,\, x_{\,n}\,\right\| \,=\, \max\limits_{\,j}\,\left|\,a^{\,1}_{\,j} \,+\, b^{\,1}_{\,j}\,\right|\times\,\cdots\times\,\max\limits_{\,j}\,\left|\,a^{\,n}_{\,j}\right|\hspace{3cm}\]
\[\hspace{1.5cm}\leq\, \max\limits_{\,j}\,\left|\,a^{\,1}_{\,j}\,\right| \,\times\,\cdots\,\times\max\limits_{\,j}\,\left|\,a^{\,n}_{\,j}\right| \,+\, \max\limits_{\,j}\,\left|\,b^{\,1}_{\,j}\,\right| \,\times\,\cdots\,\times\max\limits_{\,j}\,\left|\,a^{\,n}_{\,j}\right|\]
\[\,=\, \left\|\,x_{\,1},\, \cdots,\, x_{\,n}\,\right\| \,+\, \left\|\,y_{\,1},\, \cdots,\, x_{\,n}\,\right\|.\hspace{3cm}\]
Therefore \,$X$\, becomes a linear\;$n$-normed space with respect to the \,$n$-norm defined by (\ref{eq2.1}).\;We write a polynomials \,$x\,(\,t\,) \,\in\, X$\, of degree \,$N_{\,x} \,\geq\, n$\, in the form
\[x\,(\,t\,) \,=\, \sum\limits_{j \,=\, 0}^{\,\infty}\,a_{\,j}\,t^{\,j}\; \;\text{where}\; \;a_{\,j} \,=\, 0\; \;\text{for}\; \;j \,>\, N_{\,x}.\]
Let us now consider the linearly independent constant polynomials \,$b_{\,2},\, b_{\,3},\, \cdots,\, b_{\,n}$\, in \,$X$.\;Now, we construct a sequence of functionals \,$T_{\,k}$\, defined by 
\[T_{\,k}\,\left(\,0,\, b_{\,2},\, \cdots,\, b_{\,n}\,\right) \,=\, 0,\; \;\text{and}\] 
\begin{equation}\label{eq2.2}
T_{\,k}\,\left(\,x,\, b_{\,2},\, \cdots,\, b_{\,n}\,\right) \,=\, \left(\,a_{\,0} \,+\, a_{\,1} \,+\, \,\cdots\, \,+\, a_{\,k}\,\right)\,b_{\,2}\,b_{\,3}\,\cdots\,b_{\,n}.
\end{equation}  
Clearly, for each \,$k$, \,$T_{\,k}$\, is \,$b$-linear functional defined on \,$X \,\times\, \left<\,b_{\,2}\,\right> \,\times\, \cdots \,\times\, \left<\,b_{\,n}\,\right>$.
\[\text{Also,}\hspace{.1cm}\left|\,T_{\,k}\,\left(\,x,\, b_{\,2},\, \cdots,\, b_{\,n}\,\right)\,\right| \leq\, (\,k \,+\, 1\,)\,\max\limits_{\,j}\,\left|\,a_{\,j}\,b_{\,2}\,b_{\,3}\,\cdots\,b_{\,n}\,\right| \,=\, (\,k \,+\, 1\,)\,\left\|\,x,\, b_{\,2},\, \cdots,\, b_{\,n}\,\right\|.\]
Therefore \,$T_{\,k}$\, is a bounded \,$b$-linear functional for each \,$k$.\;If \,$x \,\in\, X$, then \,$x\,(\,t\,)$\, is a polynomial of degree \,$N_{\,x}$\, which has at most \,$N_{\,x} \,+\, 1$\, non-zero coefficients and therefore by (\ref{eq2.2}),
\[\left|\,T_{\,k}\,\left(\,x,\, b_{\,2},\, \cdots,\, b_{\,n}\,\right)\,\right| \,\leq\, \left(\,N_{\,x} \,+\, 1\,\right)\,\max\limits_{\,j}\,\left|\,a_{\,j}\,b_{\,2}\,b_{\,3}\,\cdots\,b_{\,n}\,\right|\; \;\text{for each \,$k$,}\]
where \,$\max\limits_{\,j}\,\left|\,a_{\,j}\,\right|$\, is taken over \,$a_{\,0},\, a_{\,1},\, \cdots,\, a_{\,N_{\,x}}$.\\

Therefore, \,$\left\{\,T_{\,k}\,\left(\,x,\, b_{\,2},\, \cdots,\, b_{\,n}\,\right)\,\right\}$\, is a sequence of bounded \,$b$-linear functionals for every \,$x \,\in\, X$.\;Now, if we take \,$x\,(\,t\,) \,=\, 1 \,+\, t \,+\, t^{\,2} \,+\, \,\cdots\, \,+\, t^{\,k}$, then by (\ref{eq2.1}), 
\[\left\|\,x,\, b_{\,2},\, \cdots,\, b_{\,n}\,\right\| \,=\, \,\left|\,b_{\,2}\,b_{\,3}\,\cdots\,b_{\,n}\,\right|\; \;\text{and}\] 
\[T_{\,k}\,\left(\,x,\, b_{\,2},\, \cdots,\, b_{\,n}\,\right) \,=\, \left(\,1 \,+\, 1 \,+\, \,\cdots\, \,+\, 1\,\right)\,b_{\,2}\,b_{\,3}\,\cdots\,b_{\,n} \,=\, (\,k \,+\, 1\,)\,b_{\,2}\,b_{\,3}\,\cdots\,b_{\,n}.\]
Hence, \,$\left\|\,T_{\,k}\,\right\| \,\geq\, \dfrac{\left|\,T_{\,k}\,\left(\,x,\, b_{\,2},\, \cdots,\, b_{\,n}\,\right)\,\right|}{\left\|\,x,\, b_{\,2},\, \cdots,\, b_{\,n}\,\right\|} \,=\, \dfrac{\left|\,(\,k \,+\, 1\,)\,b_{\,2}\,b_{\,3}\,\cdots\,b_{\,n}\,\right|}{\left|\,b_{\,2}\,b_{\,3}\,\cdots\,b_{\,n}\,\right|} \,=\, |\,(\,k \,+\, 1\,)\,|$. This shows that \,$\left\{\,\left\|\,T_{\,k}\,\right\|\,\right\}$\, is not bounded.\;Therefore, by Theorem (\ref{th6}), \,$X$\, is not a \,$n$-Banach space.

\subsection{Applications}

Using Theorem (\ref{th6}), we drive some results. 

\begin{theorem}
Let \,$X$\, be a \,$n$-Banach space and \,$\left\{\,T_{\,k}\,\right\} $\, be a sequence in \,$X_{F}^{\,\ast}$\, such that \,$\left\{\,T_{\,k}\,(\, x \,,\, b_{\,2} \,,\, \cdots \,,\, b_{\,n}\,)\,\right\}_{k \,=\, 1}^{\,\infty}$\, converges for every \,$x \,\in\, X$.\;Let \,$T \,:\, X \,\times\, \left<\,b_{\,2}\,\right> \,\times\, \cdots \,\times\, \left<\,b_{\,n}\,\right> \,\to\, \mathbb{K}$\, be defined by  
\[T\,(\, x \,,\, b_{\,2} \,,\, \cdots \,,\, b_{\,n}\,) \,=\, \lim\limits_{k \,\to\, \infty}\,T_{\,k}\,(\,x \,,\, b_{\,2} \,,\, \cdots \,,\, b_{\,n}\,),\, \,x \,\in\, X.\]
Then for every totally bounded subset \,$S \,\subseteq\, X$,
\[\sup\limits_{x \,\in\, S}\,\left|\,T_{\,k}\,(\,x \,,\, b_{\,2} \,,\, \cdots \,,\, b_{\,n}\,) \,-\, T\,(\, x \,,\, b_{\,2} \,,\, \cdots \,,\, b_{\,n}\,)\,\right| \,\to\, 0\; \;\text{as}\; \;k \,\to\, \infty.\]  
\end{theorem}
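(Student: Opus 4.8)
The plan is to run the classical Banach--Steinhaus argument in the $n$-normed setting, using Theorem~\ref{th6} to convert pointwise boundedness into a uniform norm bound and Theorem~\ref{th4} to supply the equicontinuity that lets us pass from pointwise to uniform convergence on a totally bounded set.

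First I would observe that for each fixed $x \in X$ the convergent sequence $\{T_k(x, b_2, \ldots, b_n)\}$ is bounded in $\mathbb{K}$, so the family $\mathcal{A} = \{T_k\}$ is pointwise bounded. Since $X$ is an $n$-Banach space, Theorem~\ref{th6} applies and produces a constant $M > 0$ with $\|T_k\| \le M$ for every $k$. Next I would verify that the limit map $T$ is itself a bounded $b$-linear functional: linearity passes to the limit because each $T_k$ is $b$-linear and limits in $\mathbb{K}$ respect sums and scalar multiples, while letting $k \to \infty$ in $|T_k(x, b_2, \ldots, b_n)| \le M \|x, b_2, \ldots, b_n\|$ gives $|T(x, b_2, \ldots, b_n)| \le M \|x, b_2, \ldots, b_n\|$, so that $\|T\| \le M$ as well.

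The heart of the argument is the standard three-$\epsilon$ estimate. Fix $\epsilon > 0$ and set $\eta = \epsilon/(3M)$. Because $S$ is totally bounded with respect to the seminorm $\|\cdot, b_2, \ldots, b_n\|$ governing the estimates, I would choose a finite $\eta$-net $\{s_1, \ldots, s_p\} \subseteq S$, so that every $x \in S$ satisfies $\|x - s_i, b_2, \ldots, b_n\| < \eta$ for some $i$. Since $T_k(s_i, b_2, \ldots, b_n) \to T(s_i, b_2, \ldots, b_n)$ for each of the finitely many points $s_i$, there is a single $K$ with $|T_k(s_i, b_2, \ldots, b_n) - T(s_i, b_2, \ldots, b_n)| < \epsilon/3$ for all $k \ge K$ and all $i = 1, \ldots, p$. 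Applying Theorem~\ref{th4} to both $T_k$ (using $\|T_k\| \le M$) and $T$ (using $\|T\| \le M$), for any $x \in S$ and its associated $s_i$ I would split
\[
|T_k(x, b_2, \ldots, b_n) - T(x, b_2, \ldots, b_n)| \le |T_k(x,\ldots) - T_k(s_i,\ldots)| + |T_k(s_i,\ldots) - T(s_i,\ldots)| + |T(s_i,\ldots) - T(x,\ldots)|,
\]
where the first and third terms are each at most $M\eta = \epsilon/3$ and the middle term is below $\epsilon/3$ once $k \ge K$. Because $K$ depends only on the finite net and not on the particular $x$, the bound is uniform over $x \in S$, which yields $\sup_{x \in S} |T_k(x, b_2, \ldots, b_n) - T(x, b_2, \ldots, b_n)| \le \epsilon$ for all $k \ge K$ and hence the claimed convergence.

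The step I expect to be the main obstacle is matching the notion of total boundedness to the single seminorm $\|\cdot, b_2, \ldots, b_n\|$ that Theorem~\ref{th4} controls: in an $n$-normed space total boundedness is most naturally phrased through the balls $B_{\{e_2, \ldots, e_n\}}$, so I would need to ensure the finite net can be taken $\eta$-fine precisely in the directions $b_2, \ldots, b_n$. Once that reading of total boundedness is pinned down, the remaining estimates are routine, and the only other point needing care is the verification that $T$ inherits boundedness with the same constant $M$, since this is exactly what keeps the first and third terms under control.
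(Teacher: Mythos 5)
Your proposal is correct and follows essentially the same route as the paper: pointwise boundedness plus Theorem \ref{th6} to get a uniform bound on $\left\|\,T_{\,k}\,\right\|$, a finite net from total boundedness (read, as the paper also implicitly does, with respect to $\left\|\,\cdot\,,\, b_{\,2}\,,\,\cdots\,,\,b_{\,n}\,\right\|$), and the three-term splitting through the net points controlled via Theorem \ref{th4}. The only difference is cosmetic: you explicitly verify $\|\,T\,\| \,\leq\, M$ before using it (a point the paper glosses over) and normalize the net radius to land exactly on $\epsilon$, whereas the paper settles for the bound $\left(\,L \,+\, 1 \,+\, \|\,T\,\|\,\right)\epsilon$.
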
 

\begin{proof}
Let \,$S$\, be a totally bounded subset of \,$X$\, and \,$\epsilon \,>\, 0$\, be given.\;Then there exist \,$x_{\,1},\, x_{\,2},\, \cdots,\, x_{\,m}$\, in \,$S$\, such that
\[S \,\subseteq\, \bigcup\limits_{j \,=\, 1}^{\,m}\left\{\,x  \,\in\, X \,:\, \left\|\,x \,-\, x_{\,j} \,,\, b_{\,2} \,,\, \cdots \,,\, b_{\,n}\,\right\| \,<\, \epsilon\,\right\}.\]
Let \,$x \,\in\, S$\, and \,$j \,\in\, \left\{\,1,\, 2,\, \cdots,\, m\,\right\}$\, be such that \,$\left\|\,x \,-\, x_{\,j} \,,\, b_{\,2} \,,\, \cdots \,,\, b_{\,n}\,\right\| \,<\, \epsilon$. Then,  
\[\left|\,T_{\,k}\,(\,x \,,\, b_{\,2} \,,\, \cdots \,,\, b_{\,n}\,) \,-\, T\,(\, x \,,\, b_{\,2} \,,\, \cdots \,,\, b_{\,n}\,)\,\right|\]
\[\leq\, \left|\,T_{\,k}\,(\,x,\, b_{\,2},\, \cdots,\, b_{\,n}\,) \,-\, T_{\,k}\,(\,x_{\,j},\, b_{\,2},\, \cdots,\, b_{\,n}\,)\,\right| \,+\, \left|\,T_{\,k}\,(\,x_{\,j},\, b_{\,2},\, \cdots,\, b_{\,n}\,) \,-\, T\,(\,x_{\,j},\, b_{\,2},\, \cdots,\, b_{\,n}\,)\,\right|\]
\[+\, \left|\,T\,(\,x_{\,j} \,,\, b_{\,2} \,,\, \cdots \,,\, b_{\,n}\,) \,-\, T\,(\,x \,,\, b_{\,2} \,,\, \cdots \,,\, b_{\,n}\,)\,\right|\]
\[\leq\, \left\|\,T_{\,k}\,\right\|\,\left\|\,x \,-\, x_{\,j} \,,\, b_{\,2} \,,\, \cdots \,,\, b_{\,n}\,\right\| \,+\, \left|\,T_{\,k}\,(\,x_{\,j},\, b_{\,2},\, \cdots,\, b_{\,n}\,) \,-\, T\,(\,x_{\,j},\, b_{\,2},\, \cdots,\, b_{\,n}\,)\,\right| \,+\,\]
\begin{equation}\label{eq3}
\|\,T\,\|\,\left\|\,x \,-\, x_{\,j} \,,\, b_{\,2} \,,\, \cdots \,,\, b_{\,n}\,\right\|.
\end{equation}
Since \,$\left\{\,T_{\,k}\,(\, x \,,\, b_{\,2} \,,\, \cdots \,,\, b_{\,n}\,)\,\right\}_{k \,=\, 1}^{\,\infty}$\, converges for every \,$x \,\in\, X$, for each \,$i \,\in\, \left\{\,1,\, 2,\, \cdots,\, m\,\right\}$, there exists \,$N_{\,i} \,\in\, \mathbb{N}$\, such that 
\[\left|\,T_{\,k}\,(\,x_{\,i} \,,\, b_{\,2} \,,\, \cdots \,,\, b_{\,n}\,) \,-\, T\,(\,x_{\,i} \,,\, b_{\,2} \,,\, \cdots \,,\, b_{\,n}\,)\,\right| \,<\, \epsilon\; \;\forall\; n \,\geq\, N_{\,i}.\]
In particular,
\[\left|\,T_{\,k}\,(\,x_{\,j} \,,\, b_{\,2} \,,\, \cdots \,,\, b_{\,n}\,) \,-\, T\,(\,x_{\,j} \,,\, b_{\,2} \,,\, \cdots \,,\, b_{\,n}\,)\,\right| \,<\, \epsilon\; \;\forall\; n \,\geq\, N, \]
\hspace{3cm} where \,$N \,=\, \max\,\left\{\,N_{\,i} \,:\, i \,=\, 1,\, 2,\, \cdots,\, m\,\right\}$.\\
Also, by Theorem (\ref{th6}), the set \,$\left\{\,\left\|\,T_{\,k}\,\right\|\,\right\}$\, is bounded, so \,$\left\|\,T_{\,k}\,\right\| \,<\, L$ \, for every \,$k$\, and for some \,$L \,>\, 0$.\;Therefore, by (\ref{eq3})
\[\left|\,T_{\,k}\,(\,x \,,\, b_{\,2} \,,\, \cdots \,,\, b_{\,n}\,) \,-\, T\,(\, x \,,\, b_{\,2} \,,\, \cdots \,,\, b_{\,n}\,)\,\right| \,\leq\, \left(\,L \,+\, 1 \,+\, \|\,T\,\|\,\right)\,\epsilon\; \;\forall\; n \,\geq\, N.\]
Since \,$N$\, is independent of the element of \,$x$, we can conclude that
\[\sup\limits_{x \,\in\, S}\,\left|\,T_{\,k}\,(\,x \,,\, b_{\,2} \,,\, \cdots \,,\, b_{\,n}\,) \,-\, T\,(\, x \,,\, b_{\,2} \,,\, \cdots \,,\, b_{\,n}\,)\,\right| \,\to\, 0\; \;\text{as}\; \;k \,\to\, \infty.\]
\end{proof}

\begin{theorem}
If \,$\left\{\,T_{\,k}\,\right\} \,\subseteq\, X^{\,\ast}_{F}$\, be a sequence such that \,$\left\{\,T_{\,k}\,(\, x \,,\, b_{\,2} \,,\, \cdots \,,\, b_{\,n}\,)\,\right\}$\, is Cauchy sequence in \,$\mathbb{K}$, for every \,$x \,\in\, X$, then \,$\left\{\,\left\|\,T_{\,k}\,\right\|\,\right\}$\, is bounded.
\end{theorem}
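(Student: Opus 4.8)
The plan is to reduce this statement to the Uniform Boundedness Principle already established as Theorem \ref{th6}. The hypothesis supplies, for each fixed \,$x \,\in\, X$, a Cauchy sequence \,$\left\{\,T_{\,k}\,(\,x \,,\, b_{\,2} \,,\, \cdots \,,\, b_{\,n}\,)\,\right\}$\, in \,$\mathbb{K}$. Since \,$\mathbb{K}$\, (being \,$\mathbb{R}$\, or \,$\mathbb{C}$) is complete, this sequence converges, and every convergent sequence of scalars is bounded. So I would first record that for each \,$x \,\in\, X$\, there is a constant \,$M_{\,x} \,>\, 0$\, with \,$\left|\,T_{\,k}\,(\,x \,,\, b_{\,2} \,,\, \cdots \,,\, b_{\,n}\,)\,\right| \,\leq\, M_{\,x}$\, for all \,$k$.

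Next I would observe that this is precisely the assertion that the family \,$\mathcal{A} \,=\, \left\{\,T_{\,k} \,:\, k \,\in\, \mathbb{N}\,\right\}$\, of bounded \,$b$-linear functionals on \,$X \,\times\, \left<\,b_{\,2}\,\right> \,\times\, \cdots \,\times\, \left<\,b_{\,n}\,\right>$\, is pointwise bounded. Taking \,$X$\, to be \,$n$-Banach, as in the surrounding applications of Theorem \ref{th6}, that theorem then applies directly and yields that \,$\mathcal{A}$\, is uniformly bounded; that is, there exists \,$K \,>\, 0$\, with \,$\|\,T_{\,k}\,\| \,\leq\, K$\, for every \,$k$. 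This is exactly the boundedness of \,$\left\{\,\|\,T_{\,k}\,\|\,\right\}$\, that is to be proved.

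The only point that needs care is the very first step: the hypothesis is phrased as a Cauchy condition rather than as a convergence or boundedness condition, so I must explicitly invoke completeness of the scalar field \,$\mathbb{K}$\, to pass from \,\emph{``Cauchy for each $x$''}\, to \,\emph{``bounded for each $x$''}. Once pointwise boundedness is secured, the conclusion is immediate from Theorem \ref{th6}, and there is no substantial obstacle beyond correctly setting up the pointwise-boundedness hypothesis in the form required by that theorem.
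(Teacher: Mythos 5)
Your proposal is correct and follows essentially the same route as the paper: establish pointwise boundedness from the Cauchy hypothesis (the paper notes directly that Cauchy sequences in \,$\mathbb{K}$\, are bounded, without the detour through completeness and convergence) and then invoke Theorem \ref{th6}. You were right to flag that Theorem \ref{th6} requires \,$X$\, to be an \,$n$-Banach space, a hypothesis the theorem statement omits but which the paper's own proof also tacitly assumes.
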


\begin{proof}
Since every Cauchy sequence in \,$\mathbb{K}$\, is bounded, the set \,$\left\{\,T_{\,k}\,(\, x \,,\, b_{\,2} \,,\, \cdots \,,\, b_{\,n}\,)\,\right\}$\, is bounded for each \,$x \,\in\, X$.\;So, by  Theorem (\ref{th6}), the set \,$\left\{\,\left\|\,T_{\,k}\,\right\|\,\right\}$\, is bounded. 
\end{proof}

\begin{theorem}
Let \,$X$\, be a n-Banach space\,.\;If \,$\left\{\,T_{\,k}\,\right\} \,\subseteq\, X^{\,\ast}_{F}$\, be a sequence such that \,$\left\{\,T_{\,k}\,(\, x \,,\, b_{\,2} \,,\, \cdots \,,\, b_{\,n}\,)\,\right\}_{k \,=\, 1}^{\,\infty}$\, converges for every \,$x \,\in\, X$, then the b-linear functional \,$T \,:\, X \,\times\, \left<\,b_{\,2}\,\right> \,\times\, \cdots \,\times\, \left<\,b_{\,n}\,\right> \,\to\, \mathbb{K}$\; defined by  
\[T\,(\, x \,,\, b_{\,2} \,,\, \cdots \,,\, b_{\,n}\,) \,=\, \lim\limits_{k \,\to\, \infty}\,T_{\,k}\,(\,x \,,\, b_{\,2} \,,\, \cdots \,,\, b_{\,n}\,)\; \;\forall\; x \,\in\, X,\, \;\text{belongs to $X^{\,\ast}_{F}$}.\]
\end{theorem}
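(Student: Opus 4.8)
The plan is to verify the two requirements for membership in \,$X^{\,\ast}_{F}$: that the limit functional \,$T$\, is \,$b$-linear and that it is bounded. I would dispatch the \,$b$-linearity first, since it is purely formal. For \,$x,\, y \,\in\, X$\, and \,$c \,\in\, \mathbb{K}$, each \,$T_{\,k}$\, satisfies conditions (I) and (II) in the definition of a \,$b$-linear functional, and since addition and scalar multiplication are continuous on \,$\mathbb{K}$\, one may pass the limit through them. Concretely,
\[T\,(\,x \,+\, y \,,\, b_{\,2} \,,\, \cdots \,,\, b_{\,n}\,) \,=\, \lim\limits_{k \,\to\, \infty}\,T_{\,k}\,(\,x \,+\, y \,,\, b_{\,2} \,,\, \cdots \,,\, b_{\,n}\,) \,=\, T\,(\,x \,,\, b_{\,2} \,,\, \cdots \,,\, b_{\,n}\,) \,+\, T\,(\,y \,,\, b_{\,2} \,,\, \cdots \,,\, b_{\,n}\,),\]
and the homogeneity \,$T\,(\,c\,x \,,\, b_{\,2} \,,\, \cdots \,,\, b_{\,n}\,) \,=\, c\,T\,(\,x \,,\, b_{\,2} \,,\, \cdots \,,\, b_{\,n}\,)$\, follows in exactly the same way, so \,$T$\, is a \,$b$-linear functional on \,$X \,\times\, \left<\,b_{\,2}\,\right> \,\times\, \cdots \,\times\, \left<\,b_{\,n}\,\right>$.

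For the boundedness, the key is the Uniform Boundedness Principle already established as Theorem (\ref{th6}). First I would observe that, for each fixed \,$x \,\in\, X$, the sequence \,$\{\,T_{\,k}\,(\,x \,,\, b_{\,2} \,,\, \cdots \,,\, b_{\,n}\,)\,\}$\, converges in \,$\mathbb{K}$\, by hypothesis and is therefore bounded; hence the family \,$\mathcal{A} \,=\, \{\,T_{\,k} \,:\, k \,\in\, \mathbb{N}\,\}$\, is pointwise bounded. Since \,$X$\, is a \,$n$-Banach space, Theorem (\ref{th6}) applies and yields a constant \,$M \,>\, 0$\, with \,$\|\,T_{\,k}\,\| \,\leq\, M$\, for every \,$k$; this uniform bound on the norms is the crucial ingredient.

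Finally I would transfer the uniform bound to \,$T$\, by passing to the limit. For any \,$x \,\in\, X$, continuity of the modulus gives
\[\left|\,T\,(\,x \,,\, b_{\,2} \,,\, \cdots \,,\, b_{\,n}\,)\,\right| \,=\, \lim\limits_{k \,\to\, \infty}\,\left|\,T_{\,k}\,(\,x \,,\, b_{\,2} \,,\, \cdots \,,\, b_{\,n}\,)\,\right| \,\leq\, M\,\left\|\,x \,,\, b_{\,2} \,,\, \cdots \,,\, b_{\,n}\,\right\|,\]
where the inequality uses \,$\left|\,T_{\,k}\,(\,x \,,\, b_{\,2} \,,\, \cdots \,,\, b_{\,n}\,)\,\right| \,\leq\, \|\,T_{\,k}\,\|\,\left\|\,x \,,\, b_{\,2} \,,\, \cdots \,,\, b_{\,n}\,\right\| \,\leq\, M\,\left\|\,x \,,\, b_{\,2} \,,\, \cdots \,,\, b_{\,n}\,\right\|$\, for every \,$k$. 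Thus \,$T$\, is bounded with \,$\|\,T\,\| \,\leq\, M$, and combined with the first paragraph this shows \,$T \,\in\, X^{\,\ast}_{F}$. I do not anticipate a genuine obstacle here: the whole argument hinges on having the Banach--Steinhaus statement of Theorem (\ref{th6}) available, and once the pointwise bound is upgraded to a uniform one the rest is a routine limit passage. The only point requiring a moment's care is the justification that the modulus and the linear operations commute with the limit, which is immediate in \,$\mathbb{K}$.
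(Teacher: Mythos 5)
Your proposal is correct and follows essentially the same route as the paper: pointwise convergence gives pointwise boundedness, Theorem (\ref{th6}) upgrades this to a uniform bound \,$M$\, on \,$\left\{\,\left\|\,T_{\,k}\,\right\|\,\right\}$, and passing to the limit in \,$\left|\,T_{\,k}\,(\,x \,,\, b_{\,2} \,,\, \cdots \,,\, b_{\,n}\,)\,\right| \,\leq\, M\,\left\|\,x \,,\, b_{\,2} \,,\, \cdots \,,\, b_{\,n}\,\right\|$\, gives boundedness of \,$T$. The only difference is that you also verify the \,$b$-linearity of the limit functional explicitly, a routine step the paper leaves implicit.
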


\begin{proof}
Since \,$\left\{\,T_{\,k}\,(\, x \,,\, b_{\,2} \,,\, \cdots \,,\, b_{\,n}\,)\,\right\}_{k \,=\, 1}^{\,\infty}$\, converges for every \,$x \,\in\, X$, it is bounded in \,$\mathbb{K}$.\;By Theorem (\ref{th6}), the set \,$\left\{\,\left\|\,T_{\,k}\,\right\|\,\right\}$\, is bounded.\;So, \,$\exists\; \;\text{some}\; M \,>\, 0$\; such that \,$\left\|\,T_{\,k}\,\right\| \,\leq\, M \;\; \;\forall\; k \,\in\, \mathbb{N}$.\;Now, for each \,$x \,\in\, X$\, and for each \,$k \,\in\, \mathbb{N}$, we have 
\[\left|\,T_{\,k}\,(\, x \,,\, b_{\,2} \,,\, \cdots \,,\, b_{\,n}\,)\,\right| \,\leq\, \left\|\,T_{\,k}\,\right\|\, \left\|\,x \,,\, b_{\,2} \,,\, \cdots \,,\, b_{\,n}\,\right\| \,\leq\, M\, \left\|\,x \,,\, b_{\,2} \,,\, \cdots \,,\, b_{\,n}\,\right\|.\]
\[ \Rightarrow\; \lim\limits_{k \,\to\, \infty}\,\left|\,T_{\,k}\,(\, x \,,\, b_{\,2} \,,\, \cdots \,,\, b_{\,n}\,)\,\right| \,\leq\,  M\,\left\|\,x \,,\, b_{\,2} \,,\, \cdots \,,\, b_{\,n}\,\right\|\;\;\forall\; x \,\in\, X \]
\[ \Rightarrow\; \left|\,T\,(\, x \,,\, b_{\,2} \,,\, \cdots \,,\, b_{\,n}\,)\,\right| \,\leq\,  M\, \left\|\,x \,,\, b_{\,2} \,,\, \cdots \,,\, b_{\,n}\,\right\|\; \;\forall\; x \,\in\, X. \hspace{1cm}\]
This shows that \,$T$\; is bounded and hence \;$T \,\in\, X^{\,\ast}_{F}$. 
\end{proof}

The Principle of Uniform Boundedness for bounded\;$b$-linear functional has major applications in the theory of weak\,$^{\,*}$\,convergence in linear \,$n$-normed space.\;Next, we discuss the idea of weak\,$^{\,*}$\,convergence of sequence of bounded\;$b$-linear functionals in linear \,$n$-normed spaces.

\begin{definition}
A sequence \,$\left\{\,T_{\,k} \,\right\} \,\subseteq\, X^{\,\ast}_{F}$\; is said to be b-weak\,*\,Convergent if there exists \,$ T \,\in\, X^{\,\ast}_{F}$\, such that 
\[\lim\limits_{k \,\to\, \infty}\,T_{\,k}\,(\, x \,,\, b_{\,2} \,,\, \cdots \,,\, b_{\,n}\,) \,=\, T\,(\, x \,,\, b_{\,2} \,,\, \cdots \,,\, b_{\,n}\,)\;\; \;\forall\; x \,\in\, X.\] 
The limits \,$T$\, is called the b-weak\,*\,limit of the sequence \;$\{\,T_{\,k}\,\}$. 
\end{definition}

\begin{theorem}
Let \,$X$\, be a n-Banach space and \,$\left\{\,T_{\,k}\,\right\} \,\subseteq\, X^{\,\ast}_{F}$\; be a sequence. Then \,$\left\{\,T_{\,k}\,\right\}$\, is b-weak\,*\,Convergent if and only if the following conditions hold:
\begin{itemize}
\item[(I)]\;\; The sequence \,$\left\{\,\left\|\,T_{\,k} \,\right\| \,\right\}$\; is bounded. 
\item[(II)]\;\; The sequence \,$\left\{\,T_{\,k}\,(\, x \,,\, b_{\,2} \,,\, \cdots \,,\, b_{\,n}\,)\,\right\}_{k \,=\, 1}^{\,\infty}$\, is Cauchy Sequence for each \;$ x \,\in\, M $, where \,$M$\, is fundamental or total subset of \,$X\,$.
\end{itemize}
\end{theorem}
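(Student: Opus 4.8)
The plan is to prove the two implications separately. The necessity direction will be a quick consequence of the Uniform Boundedness Principle already established, while the sufficiency direction will require a density--approximation argument, which I expect to be the main obstacle.

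For necessity, I would assume $\{T_{\,k}\}$ is b-weak\,*\,convergent with limit $T \,\in\, X^{\,\ast}_{F}$. Then for every $x \,\in\, X$ the scalar sequence $\{\,T_{\,k}(\,x \,,\, b_{\,2} \,,\, \cdots \,,\, b_{\,n}\,)\,\}$ converges in $\mathbb{K}$, hence is bounded; thus $\{T_{\,k}\}$ is pointwise bounded, and Theorem (\ref{th6}) immediately yields that $\{\,\|\,T_{\,k}\,\|\,\}$ is bounded, which is (I). For (II) I only need that a convergent scalar sequence is Cauchy, so $\{\,T_{\,k}(\,x \,,\, b_{\,2} \,,\, \cdots \,,\, b_{\,n}\,)\,\}$ is Cauchy for every $x \,\in\, X$, in particular for every $x$ in any subset $M$.

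For sufficiency, assume (I) and (II). By (I) fix $M_{\,0} \,>\, 0$ with $\|\,T_{\,k}\,\| \,\leq\, M_{\,0}$ for all $k$. First I would upgrade (II) from $M$ to $\operatorname{span}(M)$: by the b-linearity of each $T_{\,k}$, if the sequence is Cauchy at each point of $M$ then it is Cauchy at every finite linear combination, hence at every $x \,\in\, \operatorname{span}(M)$. The crucial step is to pass to an arbitrary $x \,\in\, X$ using the totality of $M$, i.e.\ the density of $\operatorname{span}(M)$ in $X$. Given $\epsilon \,>\, 0$ and $x \,\in\, X$, choose $y \,\in\, \operatorname{span}(M)$ with $\|\,x \,-\, y \,,\, b_{\,2} \,,\, \cdots \,,\, b_{\,n}\,\| \,<\, \epsilon$, and estimate
\[
|\,T_{\,k}(\,x \,,\, b_{\,2} \,,\, \cdots \,,\, b_{\,n}\,) \,-\, T_{\,l}(\,x \,,\, b_{\,2} \,,\, \cdots \,,\, b_{\,n}\,)\,| \,\leq\, M_{\,0}\,\epsilon \,+\, |\,T_{\,k}(\,y \,,\, b_{\,2} \,,\, \cdots \,,\, b_{\,n}\,) \,-\, T_{\,l}(\,y \,,\, b_{\,2} \,,\, \cdots \,,\, b_{\,n}\,)\,| \,+\, M_{\,0}\,\epsilon,
\]
where the outer two terms come from Theorem (\ref{th4}) together with the uniform bound, and the middle term tends to $0$ as $k,\,l \,\to\, \infty$ since $y \,\in\, \operatorname{span}(M)$. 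This shows $\{\,T_{\,k}(\,x \,,\, b_{\,2} \,,\, \cdots \,,\, b_{\,n}\,)\,\}$ is Cauchy, hence convergent in the complete field $\mathbb{K}$, for every $x \,\in\, X$.

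Finally I would define $T(\,x \,,\, b_{\,2} \,,\, \cdots \,,\, b_{\,n}\,) \,=\, \lim_{k \to \infty} T_{\,k}(\,x \,,\, b_{\,2} \,,\, \cdots \,,\, b_{\,n}\,)$. Linearity of the limit makes $T$ a b-linear functional, and letting $k \,\to\, \infty$ in $|\,T_{\,k}(\,x \,,\, b_{\,2} \,,\, \cdots \,,\, b_{\,n}\,)\,| \,\leq\, M_{\,0}\,\|\,x \,,\, b_{\,2} \,,\, \cdots \,,\, b_{\,n}\,\|$ gives the same bound for $T$, so $T \,\in\, X^{\,\ast}_{F}$ and $\{T_{\,k}\}$ is b-weak\,*\,convergent with limit $T$. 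The main obstacle is precisely the density step: it requires the totality of $M$ to supply, for each $x \,\in\, X$, an approximant $y \,\in\, \operatorname{span}(M)$ close in the specific n-norm $\|\,\cdot \,,\, b_{\,2} \,,\, \cdots \,,\, b_{\,n}\,\|$, and condition (I) is exactly what keeps the error terms $\|\,T_{\,k}\,\|\,\|\,x \,-\, y \,,\, b_{\,2} \,,\, \cdots \,,\, b_{\,n}\,\|$ uniformly small in $k$.
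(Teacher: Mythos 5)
Your proposal is correct and follows essentially the same route as the paper: necessity via the Uniform Boundedness Principle (Theorem \ref{th6}) plus the fact that convergent scalar sequences are Cauchy, and sufficiency via the standard three-term density estimate on $\overline{\operatorname{span}M}=X$ using the uniform bound from (I). If anything, you are slightly more careful than the paper in two places — explicitly upgrading the Cauchy property from $M$ to $\operatorname{span}(M)$ by linearity, and verifying that the pointwise limit $T$ is a bounded $b$-linear functional — both of which the paper leaves implicit.
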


\begin{proof}
Let \,$\{\,T_{\,k}\,\}$\, be \,$b$-weak\,*\,Convergent in \,$X^{\,\ast}_{F}$.\;Then \,$\left\{\,T_{\,k}\,(\, x \,,\, b_{\,2} \,,\, \cdots \,,\, b_{\,n}\,) \,\right\}$\; is bounded for each \,$ x \,\in\, X $.\;But \,$X$\, is \,$n$-Banach space then by Theorem (\ref{th6}), \,$\left\{\,\left\|\,T_{\,k}\,\right\|\,\right\}$\, is bounded.\;Thus $(\,I\,)$ holds.\;Also, by definition of \,$b$-weak\,*\,Convergence, \,$\left\{\,T_{\,k}\,(\, x \,,\, b_{\,2} \,,\, \cdots \,,\, b_{\,n}\,)\,\right\}_{k \,=\, 1}^{\,\infty}$\, is a convergent sequence of numbers for \,$x \,\in\, X$, in particular, for \,$ x \,\in\, M$.\;This proves $(\,II\,)$.\\

Conversely, suppose that the given two conditions hold.\;By given condition $(I)$, \,$\exists$\, a constant \,$L \,>\, 0$\, such that \,$\left\|\,T_{\,k}\,\right\| \,\leq\, L \;\; \;\forall\; k \,\in\, \mathbb{N}$.\;Since \,$\overline{\textit{\,Span\,M}} \,=\, X$, it follows that for a given \,$\epsilon \,>\, 0$\, and for each \,$ x \,\in\, X, \; \;\exists\; \;y \,\in\, \;\textit{Span\,M}$\; such that 
\[\left\|\,x \,-\, y \,,\, b_{\,2} \,,\, \cdots \,,\, b_{\,n}\,\right\| \,<\, \dfrac{\epsilon}{3\,L}\;.\;\text{Now, for \,$y \,\in\, \textit{Span\,M}$, $(II)$ implies that }\]
\,$\left\{\,T_{\,k}\,(\,y \,,\, b_{\,2} \,,\, \cdots \,,\, b_{\,n}\,)\,\right\}$\, is Cauchy sequence.\;Hence, \,$\exists$\, an integer \,$N \,>\, 0$\, such that
\[\left|\, T_{\,l}\,(\,y \,,\, b_{\,2} \,,\, \cdots \,,\, b_{\,n}\,) \,-\, T_{\,k}\,(\,y \,,\, b_{\,2} \,,\, \cdots \,,\, b_{\,n}\,)\,\right| \,<\, \dfrac{\epsilon}{3}\;\; \;\forall\; k \,,\, l \,\geq\, N.\]
Now, for an arbitrary \,$x \,\in\, X$, we have 
\[\left|\,T_{\,l}\,(\,x \,,\, b_{\,2} \,,\, \cdots \,,\, b_{\,n}\,) \,-\, T_{\,k}\,(\,x \,,\, b_{\,2} \,,\, \cdots \,,\, b_{\,n}\,)\,\right|\]
\[\leq\, \left|\,T_{l}\,(\,x,\, b_{\,2},\, \cdots,\, b_{\,n}\,) \,-\, T_{l}\,(\,y,\, b_{\,2},\, \cdots,\, b_{\,n}\,)\,\right| \,+\, \left|\,T_{l}\,(\,y,\, b_{\,2},\, \cdots,\, b_{\,n}\,) \,-\, T_{k}\,(\,y,\, b_{\,2},\, \cdots,\, b_{\,n}\,)\,\right|\]
\[ \,+\, \left|\,T_{\,k}\,(\,y \,,\, b_{\,2} \,,\, \cdots \,,\, b_{\,n}\,) \,-\, T_{\,k}\,(\,x \,,\, b_{\,2} \,,\, \cdots \,,\, b_{\,n}\,)\,\right|\]
\[\leq\, \left\|\,T_{\,l}\,\right\|\, \left\|\,x \,-\, y \,,\, b_{\,2} \,,\, \cdots \,,\, b_{\,n}\,\right\| \,+\, \left|\,T_{\,l}\,(\,y \,,\, b_{\,2} \,,\, \cdots \,,\, b_{\,n}\,) \,-\, T_{\,k}\,(\,y \,,\, b_{\,2} \,,\, \cdots \,,\, b_{\,n}\,)\,\right|\hspace{2cm}\]
\[ \,+\, \left\|\,T_{\,k}\,\right\|\,\left\|\,x\,-\, y \,,\, b_{\,2} \,,\, \cdots \,,\, b_{\,n}\,\right\|\] 
\[\,<\, L \,\cdot\, \dfrac{\epsilon}{3\,L} \,+\, \dfrac{\epsilon}{3} \,+\, L \,\cdot\, \dfrac{\epsilon}{3\,L} \,=\, \epsilon \;\; \;\forall\; k \,,\, l \,\geq\, N.\hspace{7cm}\]
This shows that \,$\left\{\,T_{\,k}\,(\,x \,,\, b_{\,2} \,,\, \cdots \,,\, b_{\,n}\,)\,\right\}$\, is Cauchy sequence in \,$\mathbb{K}$.\;But \,$\mathbb{K}$\, being complete, \,$\left\{\,T_{\,k}\,(\,x \,,\, b_{\,2} \,,\, \cdots \,,\, b_{\,n}\,)\,\right\}$\; is converges to \,$T\,(\,x \,,\, b_{\,2} \,,\, \cdots \,,\, b_{\,n}\,)$, (\,say\,), in \,$\mathbb{K}$.\;Further, \,$x$\, is an arbitrary element of \,$X$, it follows that for all \,$x \,\in\, X$ 
\[\lim\limits_{k \to \infty}\,T_{\,k}\,(\,x \,,\, b_{\,2} \,,\, \cdots \,,\, b_{\,n}\,) \,=\, T\,(\,x \,,\, b_{\,2} \,,\, \cdots \,,\, b_{\,n}\,).\] 
Thus, \,$\left\{\,T_{\,k}\,\right\}$\; is \,$b$-weak\,*\,Converges to \,$T$.
\end{proof}

\begin{theorem}\label{th7}(\,Hahn-Banach Theorem\,)
Let \,$X$\, be a linear n-normed space over the field \,$\mathbb{R}$\, and \,$W$\, be a subspace of $\,X$.\;Then each bounded b-linear functional \,$T_{\,W}$\, defined on \,$W \,\times\, \left<\,b_{\,2}\,\right> \,\times\, \cdots \,\times\, \left<\,b_{\,n}\,\right>$\, can be extended onto \,$X \,\times\, \left<\,b_{\,2}\,\right> \,\times\, \cdots \,\times\, \left<\,b_{\,n}\,\right>$\, with preservation of the norm.\;In other words, there exists a bounded b-linear functional \,$T$\, defined on \,$X \,\times\, \left<\,b_{\,2}\,\right> \,\times\, \cdots \,\times\, \left<\,b_{\,n}\,\right>$\, such that
\[T\,(\,x \,,\, b_{\,2} \,,\, \cdots \,,\, b_{\,n}\,) \,=\, T_{\,W}\,(\,x \,,\, b_{\,2} \,,\, \cdots \,,\, b_{\,n}\,)\; \;\forall\; x \,\in\, W\,  \;\;\&\;\; \left\|\,T_{\,W}\,\right\| \,=\, \|\,T\,\|.\] 
\end{theorem}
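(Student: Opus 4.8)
The plan is to reduce this to the classical analytic Hahn--Banach theorem over \,$\mathbb{R}$\, by exploiting the fact that, once the entries \,$b_{\,2},\, \cdots,\, b_{\,n}$\, are held fixed, a \,$b$-linear functional is nothing but an ordinary linear functional in its first slot, and the map \,$x \,\mapsto\, \|\,x \,,\, b_{\,2} \,,\, \cdots \,,\, b_{\,n}\,\|$\, plays the role of a seminorm on \,$X$.

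First I would introduce the linear functional \,$g \,:\, W \,\to\, \mathbb{R}$\, defined by \,$g\,(\,x\,) \,=\, T_{\,W}\,(\,x \,,\, b_{\,2} \,,\, \cdots \,,\, b_{\,n}\,)$; its linearity is immediate from conditions (I) and (II) in the definition of a \,$b$-linear functional. Next I would set \,$p\,(\,x\,) \,=\, \|\,T_{\,W}\,\|\; \|\,x \,,\, b_{\,2} \,,\, \cdots \,,\, b_{\,n}\,\|$\, for \,$x \,\in\, X$\, and verify, using axioms (III) and (IV) of the \,$n$-norm, that \,$p$\, is positively homogeneous and subadditive, i.e.\ a sublinear functional on \,$X$\, (in fact a seminorm, since \,$p\,(\,-x\,) \,=\, p\,(\,x\,)$). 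The boundedness of \,$T_{\,W}$\, then yields exactly \,$g\,(\,x\,) \,\leq\, |\,g\,(\,x\,)\,| \,\leq\, p\,(\,x\,)$\, for every \,$x \,\in\, W$.

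With these pieces in place, the classical real Hahn--Banach theorem---proved by the usual one-step extension argument (extending across a single vector \,$z \,\notin\, V$\, by choosing the value \,$g\,(\,z\,)$\, inside the interval forced by subadditivity) together with a Zorn's lemma maximality argument---produces a linear functional \,$G \,:\, X \,\to\, \mathbb{R}$\, with \,$G\big|_{\,W} \,=\, g$\, and \,$G\,(\,x\,) \,\leq\, p\,(\,x\,)$\, for all \,$x \,\in\, X$. I would then define \,$T\,(\,x \,,\, b_{\,2} \,,\, \cdots \,,\, b_{\,n}\,) \,=\, G\,(\,x\,)$, which is by construction a \,$b$-linear functional extending \,$T_{\,W}$.

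Finally, for the norm-preservation claim I would argue as follows. From \,$G\,(\,x\,) \,\leq\, p\,(\,x\,)$\, and, applying the same bound to \,$-x$\, together with \,$p\,(\,-x\,) \,=\, p\,(\,x\,)$, I obtain \,$|\,G\,(\,x\,)\,| \,\leq\, \|\,T_{\,W}\,\|\; \|\,x \,,\, b_{\,2} \,,\, \cdots \,,\, b_{\,n}\,\|$\, for every \,$x$, so \,$T$\, is bounded with \,$\|\,T\,\| \,\leq\, \|\,T_{\,W}\,\|$. The reverse inequality \,$\|\,T_{\,W}\,\| \,\leq\, \|\,T\,\|$\, is automatic, since the supremum defining \,$\|\,T\,\|$\, is taken over the larger set \,$X$\, while \,$T$\, agrees with \,$T_{\,W}$\, on \,$W$; hence \,$\|\,T\,\| \,=\, \|\,T_{\,W}\,\|$. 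The main obstacle is the one-step extension step of the classical theorem---checking that subadditivity of \,$p$\, genuinely forces a nonempty interval of admissible values for \,$G\,(\,z\,)$, via \,$g\,(\,u\,) \,+\, g\,(\,v\,) \,=\, g\,(\,u \,+\, v\,) \,\leq\, p\,(\,u \,-\, z\,) \,+\, p\,(\,v \,+\, z\,)$---everything else being a routine transcription of the classical argument into the \,$b$-linear language, the only genuinely \,$n$-normed input being that \,$x \,\mapsto\, \|\,x \,,\, b_{\,2} \,,\, \cdots \,,\, b_{\,n}\,\|$\, is a seminorm.
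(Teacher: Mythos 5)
Your proposal is correct, and it takes a genuinely different route from the paper. You reduce the statement to the classical real Hahn--Banach theorem by observing that \,$p\,(\,x\,) \,=\, \left\|\,T_{\,W}\,\right\|\,\left\|\,x \,,\, b_{\,2} \,,\, \cdots \,,\, b_{\,n}\,\right\|$\, is a seminorm (axioms (III) and (IV) of the \,$n$-norm give positive homogeneity and subadditivity) dominating the ordinary linear functional \,$g\,(\,x\,) \,=\, T_{\,W}\,(\,x \,,\, b_{\,2} \,,\, \cdots \,,\, b_{\,n}\,)$\, on \,$W$, and then invoke the one-step extension plus Zorn's lemma. The paper instead re-derives the one-step extension directly in the \,$b$-linear language (choosing \,$\alpha$\, between the same sup and inf that your interval argument produces), but then, rather than appealing to Zorn, it explicitly \emph{assumes \,$X$\, is separable}, iterates the one-step extension over a countable dense set of representatives, and finally extends by continuity to all of \,$X$; the non-separable case is merely asserted, not proved. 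Your reduction buys two things: it covers arbitrary \,$X$\, without the separability hypothesis, and it isolates the only genuinely \,$n$-normed input (that \,$x \,\mapsto\, \left\|\,x \,,\, b_{\,2} \,,\, \cdots \,,\, b_{\,n}\,\right\|$\, is a seminorm once the last \,$n \,-\, 1$\, slots are frozen), delegating all the extension machinery to the classical theorem instead of transcribing it. The norm-preservation argument (\,$|\,G\,| \,\leq\, p$\, from \,$G \,\leq\, p$\, and \,$p\,(\,-\,x\,) \,=\, p\,(\,x\,)$, plus the trivial reverse inequality from domain inclusion) matches the paper's closing step.
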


\begin{proof}
We prove this theorem by assuming \,$X$\, is separable.\;This theorem also holds for the spaces which are not separable.\;Let \,$x_{\,0} \,\in\, X \,-\, W$\; and consider the set, \,$ W \,+\, x_{\,0} \,=\, \{\,x \,+\, t\,x_{\,0} \,:\, x \,\in\, W\; \;\text{and}\; t \;\text{is an arbitrary real number}\,\}$.\;Clearly, \,$W \,+\, x_{\,0}$\, is a subspace of \,$X$\, containing \,$W$.\;Also it can be easily verified that the representation of elements of \,$ W \,+\, x_{\,0}$\, is unique and that can be represented in the form \,$x \,+\, t\,x_{\,0}$\,.\;Let \,$x_{\,1} \,,\, x_{\,2} \,\in\, W$.\;Then, 
\[ T_{\,W}\,(\,x_{\,1} \,,\, b_{\,2} \,,\, \cdots \,,\, b_{\,n}\,) \,-\, T_{\,W}\,(\,x_{\,2} \,,\, b_{\,2} \,,\, \cdots \,,\, b_{\,n}\,) \,\leq\, \left\|\,T_{\,W}\,\right\|\, \left\|\,x_{\,1} \,-\, x_{\,2} \,,\, b_{\,2} \,,\, \cdots \,,\, b_{\,n}\,\right\|  \]
\[\leq\, \left\|\,T_{\,W}\,\right\|\,\left(\,\left\|\,x_{\,1} \,+\, x_{\,0} \,,\, b_{\,2} \,,\, \cdots \,,\, b_{\,n}\,\right\| \,+\, \left\|\,x_{\,2} \,+\, x_{\,0} \,,\, b_{\,2} \,,\, \cdots \,,\, b_{\,n}\,\right\|\,\right)\] 
\[\Rightarrow\, T_{\,W}\,(\,x_{\,1} \,,\, b_{\,2} \,,\, \cdots \,,\, b_{\,n}\,) \,-\, \left\|\,T_{\,W}\,\right\|\, \left\|\,x_{\,1} \,+\, x_{\,0} \,,\, b_{\,2} \,,\, \cdots \,,\, b_{\,n}\,\right\|\hspace{3.5cm}\]
\[\hspace{3cm} \,\leq\, T_{\,W}\,(\,x_{\,2} \,,\, b_{\,2} \,,\, \cdots \,,\, b_{\,n}\,) \,+\, \left\|\,T_{\,W}\,\right\|\, \left\|\,x_{\,2} \,+\, x_{\,0} \,,\, b_{\,2} \,,\, \cdots \,,\, b_{\,n}\,\right\|.\]
Since \,$x_{\,1} \,,\, x_{\,2}$\, are arbitrary elements in \,$W$,  we obtain 
\[ \sup\limits_{x \,\in\, W}\, \left\{\,T_{\,W}\,(\,x \,,\, b_{\,2} \,,\, \cdots \,,\, b_{\,n}\,) \,-\, \left\|\,T_{\,W}\,\right\|\, \left\|\,x \,+\, x_{\,0} \,,\, b_{\,2} \,,\, \cdots \,,\, b_{\,n}\,\right\| \,\right\}\]
\[\hspace{1.5cm} \,\leq\, \inf\limits_{x \;\in\; W}\, \left\lbrace\,T_{\,W}\,(\,x \,,\, b_{\,2} \,,\, \cdots \,,\, b_{\,n}\,) \,+\, \left\|\,T_{\,W}\,\right\|\, \left\|\,x \,+\, x_{\,0} \,,\, b_{\,2} \,,\, \cdots \,,\, b_{\,n}\,\right\|\,\right\rbrace.\]
Hence, we can find a real number \,$\alpha$\, such that
\[\sup\limits_{x \,\in\, W}\, \left\{\,T_{\,W}\,(\,x \,,\, b_{\,2} \,,\, \cdots \,,\, b_{\,n}\,) \,-\, \left\|\,T_{\,W}\,\right\|\, \left\|\,x \,+\, x_{\,0} \,,\, b_{\,2} \,,\, \cdots \,,\, b_{\,n}\,\right\| \,\right\} \,\leq\, \alpha \]
\begin{equation} \label{eq6}
\leq\, \inf\limits_{x \;\in\; W}\, \left\lbrace\,T_{\,W}\,(\,x \,,\, b_{\,2} \,,\, \cdots \,,\, b_{\,n}\,) \,+\, \left\|\,T_{\,W}\,\right\|\, \left\|\,x \,+\, x_{\,0} \,,\, b_{\,2} \,,\, \cdots \,,\, b_{\,n}\,\right\|\,\right\rbrace.
\end{equation}
We define a \,$b$-linear functional \,$T_{\,0}$\, on \,$\left(\,W \,+\, x_{\,0}\,\right) \,\times\, \left<\,b_{\,2}\,\right> \,\times \cdots \,\times\, \left<\,b_{\,n}\,\right>$\, by 
\[ T_{\,0}\,(\,y \,,\, b_{\,2} \,,\, \cdots \,,\, b_{\,n}\,) \,=\, T_{\,W}\,(\,x \,,\, b_{\,2} \,,\, \cdots \,,\, b_{\,n}\,) \,-\, t\,\alpha\,, \;\text{where \,$y \,=\, x \,+\, t\,x_{\,0}$,}\]
$t$\, is unique real number and \,$\alpha$\, is the real number satisfying (\ref{eq6}) and \,$x \,\in\, W$.\;Clearly, \,$T_{\,W}\,(\,y \,,\, b_{\,2} \,,\, \cdots \,,\, b_{\,n}\,) \,=\, T_{\,0}\,(\,y \,,\, b_{\,2} \,,\, \cdots \,,\, b_{\,n}\,)\; \;\forall\; y \,\in\, W $.\;We now show that \,$T_{\,0}$\, is bounded  and \,$\left\|\,T_{\,W}\,\right\| \,=\, \left\|\,T_{\,0}\,\right\|$.\;Consider the following two cases:
\begin{itemize}
\item[(i)]\;\; First we consider \,$t \,>\, 0$.\;Since \,$W$\, be a subspace, we get \,$\dfrac{x}{t} \,\in\, W$, whenever \,$x \,\in\, W$\, and (\,\ref{eq6}\,) implies that,
\[ T_{\,0}\,(\,y \,,\, b_{\,2} \,,\, \cdots \,,\, b_{\,n}\,) \,=\, t \cdot\, \left\{\,\dfrac{1}{t}\;T_{\,W}\,(\,x \,,\, b_{\,2} \,,\, \cdots \,,\, b_{\,n}\,) \,-\, \alpha \,\right\}\]
\[\hspace{3.5cm}\leq\, t \cdot\, \left\|\,T_{\,W}\,\right\|\, \left\|\,\dfrac{x}{t} \,+\, x_{\,0} \,,\, b_{\,2} \,,\, \cdots \,,\, b_{\,n}\,\right\|\]
\[\hspace{3cm} \,=\, \left\|\,T_{\,W}\,\right\|\, \left\|\,x \,+\, t\,x_{\,0} \,,\, b_{\,2} \,,\, \cdots \,,\, b_{\,n}\,\right\|\]
\[\hspace{2cm} \,=\, \left\|\,T_{\,W}\,\right\|\, \left\|\,y \,,\, b_{\,2} \,,\, \cdots \,,\, b_{\,n}\,\right\|.\]
\item[(ii)]\;\; Next we consider \,$t \,<\, 0$, then (\,\ref{eq6}\,) implies  
\[ T_{\,W}\,\left(\,\dfrac{x}{t} \,,\, b_{\,2} \,,\, \cdots \,,\, b_{\,n}\,\right) \,-\, \alpha \,\geq\, \,-\, \left\|\,T_{\,W}\,\right\|\, \left\|\, \dfrac{x}{t} \,+\, x_{\,0} \,,\, b_{\,2} \,,\, \cdots \,,\, b_{\,n}\,\right\|\]
\[\hspace{4cm} \,=\, \,-\, \dfrac{1}{|\,t\,|}\,\left\|\,T_{\,W}\,\right\|\, \left\|\,y \,,\, b_{\,2} \,,\, \cdots \,,\, b_{\,n}\,\right\| \]
\[\hspace{3.7cm} \,=\, \dfrac{1}{t}\, \left\|\,T_{\,W}\,\right\|\, \left\|\,y \,,\, b_{\,2} \,,\, \cdots \,,\, b_{\,n}\,\right\|\;.\]
\[\text{So,}\hspace{.2cm}\; T_{\,0}\,(\,y \,,\, b_{\,2} \,,\, \cdots \,,\, b_{\,n}\,) \,=\, t \cdot\, \left\{\,T_{\,W}\,\left(\, \dfrac{x}{t} \,,\, b_{\,2} \,,\, \cdots \,,\, b_{\,n}\,\right) \,-\, \alpha \,\right\}\hspace{3cm}\]
\[\hspace{1.2cm} \,\leq\, t \cdot\, \dfrac{1}{t}\, \left\|\,T_{\,W}\,\right\|\, \left\|\,y \,,\, b_{\,2} \,,\, \cdots \,,\, b_{\,n}\,\right\| \]
\[\hspace{.6cm}\,=\, \left\|\,T_{\,W}\,\right\|\, \left\|\,y \,,\, b_{\,2} \,,\, \cdots \,,\, b_{\,n}\,\right\|.\]
\end{itemize}
Therefore for each \,$(\,y \,,\, b_{\,2} \,,\, \cdots \,,\, b_{\,n}\,) \,\in\, \left(\,W \,+\, x_{\,0}\,\right) \,\times\, \left<\,b_{\,2}\,\right> \,\times\, \cdots \,\times\, \left<\,b_{\,n}\,\right>$,
\begin{equation} \label{eq7}
T_{\,0}\,(\,y \,,\, b_{\,2} \,,\, \cdots \,,\, b_{\,n}\,)\, \,\leq\, \left\|\,T_{\,W}\,\right\|\, \left\|\,y \,,\, b_{\,2} \,,\, \cdots \,,\, b_{\,n}\,\right\|\;.
\end{equation}
Now, in the inequality (\,\ref{eq7}\,), we replace \,$ \,-\, y$\, for \,$y$, we get 
\[ T_{\,0}\,(\,-\, y \,,\, b_{\,2} \,,\, \cdots \,,\, b_{\,n}\,) \,\leq\, \left\|\,T_{\,W}\,\right\| \;\|\;-\;y \;,\; b_{\,2} \,,\, \cdots \,,\, b_{\,n}\;\| \]
\[\Rightarrow\, \,-\, T_{\,0}\,(\,y \,,\, b_{\,2} \,,\, \cdots \,,\, b_{\,n}\,) \,\leq\, \left\|\,T_{\,W}\,\right\|\, \left\|\,y \,,\, b_{\,2} \,,\, \cdots \,,\, b_{\,n}\,\right\|.\]
Combining the above inequality with (\,\ref{eq7}\,), we obtain
\[\left|\,T_{\,0}\,(\,y \,,\, b_{\,2} \,,\, \cdots \,,\, b_{\,n}\,)\,\right| \,\leq\, \left\|\,T_{\,W}\,\right\|\,\left\|\,y \,,\, b_{\,2} \,,\, \cdots \,,\, b_{\,n}\,\right\|\]
and therefore \,$\left\|\,T_{\,0}\,\right\| \,\leq\, \left\|\,T_{\,W}\,\right\|$.\;Since the domain of \,$T_{\,W}$\, is a subset of the domain of \,$T_{\,0}$, we have \,$\left\|\,T_{\,0}\,\right\| \,\geq\, \left\|\,T_{\,W}\,\right\|$\, and therefore \,$\left\|\,T_{\,0}\,\right\| \,=\, \left\|\,T_{\,W}\,\right\|$.\;Thus, \,$T_{\,0}\,(\,x \,,\, b_{\,2} \,,\, \cdots \,,\, b_{\,n}\,)$\, is the extension of \,$T_{\,W}\,(\,x \,,\, b_{\,2} \,,\, \cdots \,,\, b_{\,n}\,)$\, onto \,$\left(\, W \,+\, x_{\,0} \,\right) \,\times\, \left<\,b_{\,2}\,\right> \,\times\, \cdots \,\times\, \left<\,b_{\,n}\,\right>$\, with \,$\left\|\,T_{\,0}\,\right\| \,=\, \left\|\,T_{\,W}\,\right\|$.\;Since \,$X$\, is separable, \,$\exists$\, a countable dense subset \,$D$\, of \,$X$.\;We select elements from \,$D$\, those belong to \,$X \,-\, W$\, and arrange them as a sequence \;$\left\{\,x_{\,0} \,,\, x_{\,1} \,,\, x_{\,2} \,\cdots\, \right\}$.\;According to the preceding procedure, we extend \,$T_{\,W}\,(\,x \,,\, b_{\,2} \,,\, \cdots \,,\, b_{\,n}\,)$\, onto \,$\left(\, W \,+\, x_{\,0} \,\right) \,\times\, \left<\,b_{\,2}\,\right> \,\times\, \cdots \,\times\, \left<\,b_{\,n}\,\right>  \,=\, W_{\,1} \,\times\, \left<\,b_{\,2}\,\right> \,\times\, \cdots \,\times\, \left<\,b_{\,n}\,\right> \,,\, \left(\, W_{\,1} \,+\, x_{\,1} \,\right)\,\times\, \left<\,b_{\,2}\,\right> \,\times\, \cdots \,\times\, \left<\,b_{\,n}\,\right> \,=\, W_{\,2} \,\times\, \left<\,b_{\,2}\,\right> \,\times\, \cdots \,\times\, \left<\,b_{\,n}\,\right> \,,\, \left(\, W_{\,2} \,+\, x_{\,2} \,\right)\,\times\, \left<\,b_{\,2}\,\right> \,\times\, \cdots \,\times\, \left<\,b_{\,n}\,\right> \,=\, W_{\,3} \,\times\, \left<\,b_{\,2}\,\right> \,\times\, \cdots \,\times\, \left<\,b_{\,n}\,\right>$\, and so on.\;Then we arrive at a bounded \,$b$-linear functional \,$T_{\,g} \,:\, W_{\,g} \,\times\, \left<\,b_{\,2}\,\right> \,\times\, \cdots \,\times\, \left<\,b_{\,n}\,\right> \,\to\, \mathbb{K}$, where \,$W_{g}$\; is everywhere dense in \,$X$\, and contains \,$W_{k}$, for \;$k \,=\,1 \,,\, 2 \,,\, 3 \,\cdots$.\;Also, \,$\left\|\,T_{g}\,\right\| \,=\, \left\|\,T_{\,W}\,\right\|$\,.\;If \,$y \,\in\, X \,-\, W_{g}$, then a sequence \;$\{\,y_{\,k}\,\} \,,\, y_{\,k} \,\in\, W_{g}$\; exists such that \;$y \,=\, \lim\limits_{k \to \infty} \,y_{\,k}$.\;We now define 
\[T\,(\,y \,,\, b_{\,2} \,,\, \cdots \,,\, b_{\,n}\,) \,=\, \lim\limits_{k \to \infty}\,T_{\,g}\,(\,y_{\,k} \,,\, b_{\,2} \,,\, \cdots \,,\, b_{\,n}\,).\]
If \,$y \,\in\, W_{g}$, we can put in particular \,$y_{\,1} \,=\, y_{\,2} \,=\, \,\cdots\, \,=\, y$\; and so the \,$b$-linear functional \,$T\,(\,y \,,\, b_{\,2} \,,\, \cdots \,,\, b_{\,n}\,)$\; is an extension of \,$T_{\,g}\,(\,y \,,\, b_{\,2} \,,\, \cdots \,,\, b_{\,n}\,)$\; onto \;$X \,\times\, \left<\,b_{\,2}\,\right> \,\times\, \cdots \,\times\, \left<\,b_{\,n}\,\right>$.\;Also,
\[\left|\,T\,(\,y \,,\, b_{\,2} \,,\, \cdots \,,\, b_{\,n}\,)\,\right| \,=\, \lim\limits_{n \to \infty}\, \left|\,T_{\,g}\,(\,y_{\,k} \,,\, b_{\,2} \,,\, \cdots \,,\, b_{\,n}\,)\,\right|\hspace{2cm}\]
\[\hspace{3cm}\,\leq\, \left\|\,T_{\,g}\,\right\|\, \lim\limits_{k \to \infty}\, \left\|\,y_{\,k} \,,\, b_{\,2} \,,\, \cdots \,,\, b_{\,n}\,\right\| \,=\, \left\|\,T_{\,W}\,\right\|\, \left\|\,y \,,\, b_{\,2} \,,\, \cdots \,,\, b_{\,n}\,\right\|.\]
This shows that \,$T\,(\,y \,,\, b_{\,2} \,,\, \cdots \,,\, b_{\,n}\,)$\; is bounded \,$b$-linear functional and in particular \,$\|\,T\,\| \,\leq\, \left\|\,T_{\,W}\,\right\|$.\;Because the domain of \,$T$\, includes the domain of \,$T_{\,W}$, we have \,$\|\,T\,\| \,\geq\, \|\,T_{\,W}\,\|$\; and therefore \;$\|\,T\,\| \,=\, \|\,T_{\,W}\,\|$.\;Clearly, 
\[T\,(\,x \,,\, b_{\,2} \,,\, \cdots \,,\, b_{\,n}\,) \,=\, T_{\,W}\,(\,x \,,\, b_{\,2} \,,\, \cdots \,,\, b_{\,n}\,)\; \;\text{for}\; \;x \,\in\, W.\] This proves the theorem.
\end{proof}

\subsubsection{Example}
In this example, we illustrate the Theorem (\ref{th7}).\\  
Let \,$X \,=\, \mathbb{R}^{\,n}$\, be a linear\;$n$-normed space with \,$n$-norm defined by 
\[
\left\|\,x_{\,1},\, x_{\,2},\, \cdots,\, x_{\,n}\,\right\| \,=\,
\text{abs}\left(\, 
\begin{vmatrix}
\;x_{\,1\,1} & x_{\,1\,2} & \cdots x_{\,1\,n}\; \\
\;x_{\,2\,1} & x_{\,2\,2} & \cdots x_{\,2\,n}\; \\
\vdots  & \ddots & \vdots\\
\;x_{\,n\,1} & x_{\,n\,2} & \cdots x_{\,n\,n}\;\\
\end{vmatrix}
\right)
\]
where \,$x_{\,i} \,=\, \left(\,x_{\,i\,1},\, x_{\,i\,2},\, \cdots x_{\,i\,n}\,\right) \,\in\, \mathbb{R}^{\,n}$\, for each \,$i \,=\, 1,\, 2,\, \cdots,\, n$.\\ 
Let \,$W \,=\, \left\{\,\left(\,x_{\,1},\, x_{\,2},\, \cdots,\, x_{\,n \,-\, 1},\, 0\,\right) \,:\, x_{\,1},\, x_{\,2},\, \cdots,\, x_{n \,-\, 1} \,\in\, \mathbb{R}\,\right\}$.\;Then \,$W$\, is a subspace of \,$X$.\;Consider the subspaces \,$\left<\,b_{\,2} \,=\, (\,0,\, 1,\, \cdots,\, 1\,)\,\right>,\, \left<\,b_{\,3} \,=\, (\,1,\, 0,\, \cdots,\, 1\,)\,\right>$, \,$\cdots,\, \left<\,b_{\,n} \,=\, (\,1,\, 1,\, \cdots,\, 0,\, 1\,)\,\right>$\, of \,$X$\, generated by the fixed elements \,$b_{\,2},\, b_{\,3},\, \cdots,\, b_{\,n}$\, in\,$X$.\;Define \,$T_{\,1} \,:\, W \,\times\, \left<\,b_{\,2}\,\right> \,\times\, \cdots \,\times\, \left<\,b_{\,n}\,\right> \,\to\, \mathbb{R}$\, by   
\[
T_{\,1}\,\left\{\,\left(\,x_{\,1},\, x_{\,2},\, \cdots,\, x_{\,n \,-\, 1},\, 0\,\right),\, b_{\,2},\, \cdots,\, b_{\,n}\,\right\} \,= 
\begin{vmatrix}
\;x_{\,1} & x_{\,2} & \cdots x_{n \,-\, 1} &0\; \\
\;0 & 1 & \cdots 1 & 1\; \\
\;1 & 0 & \cdots 1 & 1\; \\
\vdots  & \ddots & \vdots\\
\;1 & 1 & \cdots 0 & 1\;\;\\
\end{vmatrix}
\]
for all \,$\left(\,x_{\,1},\, x_{\,2},\, \cdots,\, x_{\,n \,-\, 1},\, 0\,\right) \,\in\, W$.\\
For every \,$\left(\,x_{\,1},\, x_{\,2},\, \cdots,\, x_{\,n \,-\, 1},\, 0\,\right),\, \left(\,y_{\,1},\, y_{\,2},\, \cdots,\, y_{\,n \,-\, 1},\, 0\,\right) \,\in\, W$\, and \,$k \,\in\, \mathbb{R}$,
\[T_{\,1}\,\left\{\,\left(\,x_{\,1},\, x_{\,2},\, \cdots,\, x_{\,n \,-\, 1},\, 0\,\right) \,+\, \left(\,y_{\,1},\, y_{\,2},\, \cdots,\, y_{\,n \,-\, 1},\, 0\,\right),\, b_{\,2},\, \cdots,\, b_{\,n}\,\right\}\]
\[=\, T_{\,1}\,\left\{\,\left(\,x_{\,1} \,+\, y_{\,1},\, x_{\,2} \,+\, y_{\,2},\, \cdots,\, x_{\,n \,-\, 1} \,+\, y_{\,n \,-\, 1},\, 0\,\right),\, b_{\,2},\, \cdots,\, b_{\,n}\,\right\}\]
\[=\,
\begin{vmatrix}
\;x_{\,1} \,+\, y_{\,1} & x_{\,2} \,+\, y_{\,2} & \cdots x_{n \,-\, 1} \,+\, y_{n \,-\, 1} &0\; \\
\;0 & 1 & \cdots 1 & 1\; \\
\;1 & 0 & \cdots 1 & 1\; \\
\vdots  & \ddots & \vdots\\
\;1 & 1 & \cdots 0 & 1\;\;\\
\end{vmatrix}
\hspace{3cm}
\]
\[
\,= 
\begin{vmatrix}
\;x_{\,1} & x_{\,2} & \cdots x_{n \,-\, 1} &0\; \\
\;0 & 1 & \cdots 1 & 1\; \\
\;1 & 0 & \cdots 1 & 1\; \\
\vdots  & \ddots & \vdots\\
\;1 & 1 & \cdots 0 & 1\;\;\\
\end{vmatrix}
+ 
\begin{vmatrix}
\;y_{\,1} & y_{\,2} & \cdots y_{n \,-\, 1} &0\; \\
\;0 & 1 & \cdots 1 & 1\; \\
\;1 & 0 & \cdots 1 & 1\; \\
\vdots  & \ddots & \vdots\\
\;1 & 1 & \cdots 0 & 1\;\;\\
\end{vmatrix}
\hspace{2cm}
\]
\[=\, T_{\,1}\,\left\{\,\left(\,x_{\,1},\, x_{\,2},\, \cdots,\, x_{\,n \,-\, 1},\, 0\,\right),\, b_{\,2},\, \cdots,\, b_{\,n}\,\right\} \,+\, T_{\,1}\,\left\{\,\left(\,y_{\,1},\, y_{\,2},\, \cdots,\, y_{\,n \,-\, 1},\, 0\,\right),\, b_{\,2},\, \cdots,\, b_{\,n}\,\right\}\] 
\[\text{and}\hspace{1cm} T_{\,1}\,\left\{\,k\,\left(\,x_{\,1},\, x_{\,2},\, \cdots,\, x_{\,n \,-\, 1},\, 0\,\right),\, b_{\,2},\, \cdots,\, b_{\,n}\,\right\}\hspace{5cm}\]
\[=\, T_{\,1}\,\left\{\,\left(\,k\,x_{\,1},\, k\,x_{\,2},\, \cdots,\, k\,x_{\,n \,-\, 1},\, 0\,\right),\, b_{\,2},\, \cdots,\, b_{\,n}\,\right\}\hspace{1.9cm}\]
\[=\,
\begin{vmatrix}
\;k\,x_{\,1} & k\,x_{\,2} & \cdots k\,x_{n \,-\, 1} &0\; \\
\;0 & 1 & \cdots 1 & 1\; \\
\;1 & 0 & \cdots 1 & 1\; \\
\vdots  & \ddots & \vdots\\
\;1 & 1 & \cdots 0 & 1\;\;\\
\end{vmatrix}
=\,k\,
\begin{vmatrix}
\;x_{\,1} & x_{\,2} & \cdots x_{n \,-\, 1} &0\; \\
\;0 & 1 & \cdots 1 & 1\; \\
\;1 & 0 & \cdots 1 & 1\; \\
\vdots  & \ddots & \vdots\\
\;1 & 1 & \cdots 0 & 1\;\;\\
\end{vmatrix}
\]
\[=\, k\;T_{\,1}\,\left\{\,\left(\,x_{\,1},\, x_{\,2},\, \cdots,\, x_{\,n \,-\, 1},\, 0\,\right),\, b_{\,2},\, \cdots,\, b_{\,n}\,\right\}.\hspace{2.2cm}\]
Also,  
\[\left\|\,T_{\,1}\,\right\| \,=\, \sup\limits_{\left\|\,\left(\,x_{\,1},\, x_{\,2},\, \cdots,\, x_{\,n \,-\, 1},\, 0\,\right),\, b_{\,2},\, \cdots,\, b_{\,n}\,\right\| \,=\, 1}\,\left|\,T_{\,1}\,\left\{\,\left(\,x_{\,1},\, x_{\,2},\, \cdots,\, x_{\,n \,-\, 1},\, 0\,\right),\, b_{\,2},\, \cdots,\, b_{\,n}\,\right\}\,\right| \,=\, 1.\]
This shows that \,$T_{\,1}$\, is a bounded \,$b$-linear functional defined on \,$W \,\times\, \left<\,b_{\,2}\,\right> \,\times\, \cdots \,\times\, \left<\,b_{\,n}\,\right>$.\;Now, we define \,$T \,:\, X \,\times\, \left<\,b_{\,2}\,\right> \,\times\, \cdots \,\times\, \left<\,b_{\,n}\,\right> \,\to\, \mathbb{R}$\, by
\[
T\,\left\{\,\left(\,x_{\,1},\, x_{\,2},\, \cdots,\, x_{\,n}\,\right),\, b_{\,2},\, \cdots,\, b_{\,n}\,\right\} \,= 
\begin{vmatrix}
\;x_{\,1} & x_{\,2} & \cdots x_{n \,-\, 1} & x_{\,n}\; \\
\;0 & 1 & \cdots 1 & 1\; \\
\;1 & 0 & \cdots 1 & 1\; \\
\vdots  & \ddots & \vdots\\
\;1 & 1 & \cdots 0 & 1\;\;\\
\end{vmatrix}
\]
for all \,$\left(\,x_{\,1},\, x_{\,2},\, \cdots,\, x_{\,n}\,\right) \,\in\, X$.\;According to the previous procedure, \,$T$\, is a bounded \,$b$-linear functional defined on \,$X \,\times\, \left<\,b_{\,2}\,\right> \,\times\, \cdots \,\times\, \left<\,b_{\,n}\,\right>$\, with \,$\|\,T\,\| \,=\, 1$.\;Therefore \,$T$\, is an extension of \,$T_{\,1}$\, onto \,$X \,\times\, \left<\,b_{\,2}\,\right> \,\times\, \cdots \,\times\, \left<\,b_{\,n}\,\right>$\, with \,$\left\|\,T_{\,1}\,\right\| \,=\, \|\,T\,\| \,=\, 1$.\\

\subsection{Applications}
We drive some applications of the Theorem (\ref{th7}).
\begin{theorem}\label{th8}
Let \,$X$\, be a real linear n-normed space and let \,$x_{\,0}$\, be an arbitrary non-zero element in \,$X$.\;Then there exists a bounded b-linear functional \,$T$\, defined on \,$X \,\times\, \left<\,b_{\,2}\,\right> \,\times\, \cdots \,\times\, \left<\,b_{\,n}\,\right>$\, such that 
\[ \|\,T\,\| \,=\, 1\; \;\;\&\;\;  \;T\,(\,x_{\,0} \,,\, b_{\,2} \,,\, \cdots \,,\, b_{\,n}\,) \,=\, \left\|\,x_{\,0} \,,\, b_{\,2} \,,\, \cdots \,,\, b_{\,n}\,\right\|.\]
\end{theorem}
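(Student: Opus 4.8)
The plan is to reduce the statement to the Hahn-Banach extension theorem (Theorem \ref{th7}) by building a norm-one \,$b$-linear functional on the one-dimensional subspace spanned by \,$x_{\,0}$. First I would set \,$W \,=\, \left<\,x_{\,0}\,\right> \,=\, \{\,t\,x_{\,0} \,:\, t \,\in\, \mathbb{R}\,\}$\, and define \,$T_{\,W} \,:\, W \,\times\, \left<\,b_{\,2}\,\right> \,\times\, \cdots \,\times\, \left<\,b_{\,n}\,\right> \,\to\, \mathbb{R}$\, by
\[T_{\,W}\,(\,t\,x_{\,0} \,,\, b_{\,2} \,,\, \cdots \,,\, b_{\,n}\,) \,=\, t\,\left\|\,x_{\,0} \,,\, b_{\,2} \,,\, \cdots \,,\, b_{\,n}\,\right\|.\]
Since every element of \,$W$\, has the unique form \,$t\,x_{\,0}$, this map is well defined, and additivity and homogeneity in the first slot follow at once from the arithmetic of the scalar \,$t$, so \,$T_{\,W}$\, is a genuine \,$b$-linear functional extending trivially the desired value at \,$x_{\,0}$.

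The key step is the norm computation. Using the homogeneity axiom (III) of the \,$n$-norm, for every \,$t \,\in\, \mathbb{R}$\, we have
\[\left|\,T_{\,W}\,(\,t\,x_{\,0} \,,\, b_{\,2} \,,\, \cdots \,,\, b_{\,n}\,)\,\right| \,=\, |\,t\,|\,\left\|\,x_{\,0} \,,\, b_{\,2} \,,\, \cdots \,,\, b_{\,n}\,\right\| \,=\, \left\|\,t\,x_{\,0} \,,\, b_{\,2} \,,\, \cdots \,,\, b_{\,n}\,\right\|.\]
Hence \,$\left|\,T_{\,W}\,(\,x \,,\, b_{\,2} \,,\, \cdots \,,\, b_{\,n}\,)\,\right| \,=\, \left\|\,x \,,\, b_{\,2} \,,\, \cdots \,,\, b_{\,n}\,\right\|$\, for all \,$x \,\in\, W$, which shows that \,$T_{\,W}$\, is bounded and that \,$\left\|\,T_{\,W}\,\right\| \,=\, 1$, since the defining ratio equals \,$1$\, for every \,$x \,\in\, W$\, with \,$\left\|\,x \,,\, b_{\,2} \,,\, \cdots \,,\, b_{\,n}\,\right\| \,\neq\, 0$.

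Once \,$T_{\,W}$\, is in hand, I would invoke Theorem \ref{th7} to extend \,$T_{\,W}$\, to a bounded \,$b$-linear functional \,$T$\, on \,$X \,\times\, \left<\,b_{\,2}\,\right> \,\times\, \cdots \,\times\, \left<\,b_{\,n}\,\right>$\, with preservation of norm, so that \,$\|\,T\,\| \,=\, \left\|\,T_{\,W}\,\right\| \,=\, 1$. Because \,$T$\, agrees with \,$T_{\,W}$\, on \,$W$\, and \,$x_{\,0} \,\in\, W$, we immediately obtain \,$T\,(\,x_{\,0} \,,\, b_{\,2} \,,\, \cdots \,,\, b_{\,n}\,) \,=\, T_{\,W}\,(\,x_{\,0} \,,\, b_{\,2} \,,\, \cdots \,,\, b_{\,n}\,) \,=\, \left\|\,x_{\,0} \,,\, b_{\,2} \,,\, \cdots \,,\, b_{\,n}\,\right\|$, which is exactly the required conclusion.

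The one point that requires care is a tacit nondegeneracy assumption: the computation \,$\left\|\,T_{\,W}\,\right\| \,=\, 1$\, presumes \,$\left\|\,x_{\,0} \,,\, b_{\,2} \,,\, \cdots \,,\, b_{\,n}\,\right\| \,\neq\, 0$, that is, that \,$x_{\,0},\, b_{\,2},\, \cdots,\, b_{\,n}$\, are linearly independent. If this fails, both sides of the target identity vanish while the norm-one requirement cannot be met by a functional supported on \,$W$, so the intended reading of the statement is that \,$b_{\,2},\, \cdots,\, b_{\,n}$\, are fixed so that \,$x_{\,0}$\, is independent from them. Granting this, the entire analytic difficulty is absorbed by Theorem \ref{th7}, and the remaining work is only the elementary verification above; there is no genuine obstacle once the correct one-dimensional functional is identified.
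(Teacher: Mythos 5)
Your proposal is correct and follows essentially the same route as the paper: take \,$W \,=\, \{\,t\,x_{\,0}\,\}$, define \,$T_{\,W}\,(\,t\,x_{\,0}\,,\,b_{\,2}\,,\,\cdots\,,\,b_{\,n}\,) \,=\, t\,\|\,x_{\,0}\,,\,b_{\,2}\,,\,\cdots\,,\,b_{\,n}\,\|$, compute \,$\|\,T_{\,W}\,\| \,=\, 1$\, via homogeneity, and extend by Theorem \ref{th7}. Your closing remark about the tacit assumption that \,$x_{\,0},\,b_{\,2},\,\cdots,\,b_{\,n}$\, be linearly independent is a legitimate point that the paper's own proof also passes over in silence.
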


\begin{proof}
Consider the set \,$W \,=\, \left\{\,t\,x_{\,0} \;|\; \text{where $t$ is a arbitrary real number}\,\right\}$.\;Then it is easy to verify that \,$W$\, is a subspace of \,$X$.\;Define  
\[T_{\,W} \,:\, W \,\times\, \left<\,b_{\,2}\,\right> \,\times\, \cdots \,\times\, \left<\,b_{\,n}\,\right> \,\to\, \mathbb{R}\; \;\text{by}\]
\[ T_{\,W}\,(\,x \,,\, b_{\,2} \,,\, \cdots \,,\, b_{\,n}\,) \,=\, T_{\,W}\,(\,t\,x_{\,0} \,,\, b_{\,2} \,,\, \cdots \,,\, b_{\,n}\,) \,=\, t\, \left\|\,x_{\,0} \,,\, b_{\,2} \,,\, \cdots \,,\, b_{\,n}\,\right\|,\, t \,\in\, \mathbb{R}.\] 
It is easy to verify that \,$T_{\,W}$\, is a \,$b$-linear functional with the property 
\[T_{\,W}\,(\,x_{\,0} \,,\, b_{\,2} \,,\, \cdots \,,\, b_{\,n}\,) \,=\, \left\|\,x_{\,0} \,,\, b_{\,2} \,,\, \cdots \,,\, b_{\,n}\,\right\|.\] 
Further, for any \,$x \,\in\, W$, we have 
\[\left|\,T_{\,W}\,(\,x \,,\, b_{\,2} \,,\, \cdots \,,\, b_{\,n}\,)\,\right| \,=\, |\,t\,|\, \left\|\,x_{\,0} \,,\, b_{\,2} \,,\, \cdots \,,\, b_{\,n}\,\right\| \,=\, \left\|\,t\,x_{\,0} \,,\, b_{\,2} \,,\, \cdots \,,\, b_{\,n}\,\right\|\]
\[\hspace{8.3cm} \,=\, \left\|\,x \,,\, b_{\,2} \,,\, \cdots \,,\, b_{\,n}\,\right\|.\] 
Thus, \,$ T_{\,W}$\; is bounded \,$b$-linear functional and \,$\left\|\,T_{\,W}\,\right\| \,=\, 1$.\;By Theorem (\ref{th7}), \,$\exists\, \,T \,\in\, X^{\,\ast}_{F}$\, such that \,$T\,(\,x \,,\, b_{\,2} \,,\, \cdots \,,\, b_{\,n}\,) \,=\, T_{\,W}\,(\,x \,,\, b_{\,2} \,,\, \cdots \,,\, b_{\,n}\,)\; \;\forall\; x \,\in\, W$\, and \,$\|\,T\,\| \,=\, \left\|\,T_{\,W}\,\right\|$.\;Hence, 
\[T\,(\,x_{\,0} \,,\, b_{\,2} \,,\, \cdots \,,\, b_{\,n}\,) \,=\, T_{\,W}\,(\,x_{\,0} \,,\, b_{\,2} \,,\, \cdots \,,\, b_{\,n}\,) \,=\, \left\|\,x_{\,0} \,,\, b_{\,2} \,,\, \cdots \,,\, b_{\,n}\,\right\|,\, \;\&\; \,\|\,T\,\| \,=\, 1.\]
\end{proof}

\begin{theorem}
Let \,$X$\, be a real linear n-normed space and \,$\,x \,\in\, X$.\;Then 
\[\left\|\,x \,,\, b_{\,2} \,,\, \cdots \,,\, b_{\,n}\,\right\| \,=\, \sup\,\left\{\, \dfrac{\left|\,T\,(\,x \,,\, b_{\,2} \,,\, \cdots \,,\, b_{\,n}\,)\,\right|}{\|\,T\,\|} \,:\, T \,\in\, X^{\,\ast}_{F} \;,\; T \,\neq\, 0 \,\right\}.\]
\end{theorem}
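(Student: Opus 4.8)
The plan is to prove the equality by establishing the two inequalities separately, the nontrivial one being supplied by Theorem~\ref{th8}, which is itself the Hahn--Banach consequence already proved in the paper.

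First I would dispose of the easy inequality. For every nonzero \,$T \,\in\, X^{\,\ast}_{F}$\, one has \,$\|\,T\,\| \,>\, 0$\, (a $b$-linear functional whose norm vanishes sends every vector to \,$0$, hence is the zero functional), so the bound \,$\left|\,T\,(\,x \,,\, b_{\,2} \,,\, \cdots \,,\, b_{\,n}\,)\,\right| \,\leq\, \|\,T\,\|\, \left\|\,x \,,\, b_{\,2} \,,\, \cdots \,,\, b_{\,n}\,\right\|$\, from the definition of the norm of \,$T$\, may be divided by \,$\|\,T\,\|$\, to give
\[\frac{\left|\,T\,(\,x \,,\, b_{\,2} \,,\, \cdots \,,\, b_{\,n}\,)\,\right|}{\|\,T\,\|} \,\leq\, \left\|\,x \,,\, b_{\,2} \,,\, \cdots \,,\, b_{\,n}\,\right\|.\]
Taking the supremum over all such \,$T$\, yields \,$\sup \,\leq\, \left\|\,x \,,\, b_{\,2} \,,\, \cdots \,,\, b_{\,n}\,\right\|$. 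This step is routine.

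For the reverse inequality I would split into cases. If \,$\left\|\,x \,,\, b_{\,2} \,,\, \cdots \,,\, b_{\,n}\,\right\| \,=\, 0$\, (in particular when \,$x \,=\, 0$, or when \,$x,\, b_{\,2},\, \cdots,\, b_{\,n}$\, are linearly dependent), the inequality just obtained forces every quotient in the defining set to be zero, so the supremum is \,$0$\, and the identity holds trivially. If instead \,$\left\|\,x \,,\, b_{\,2} \,,\, \cdots \,,\, b_{\,n}\,\right\| \,\neq\, 0$, then \,$x$\, is a nonzero element of \,$X$, and Theorem~\ref{th8} furnishes a bounded $b$-linear functional \,$T_{\,0} \,\in\, X^{\,\ast}_{F}$\, with \,$\|\,T_{\,0}\,\| \,=\, 1$\, and \,$T_{\,0}\,(\,x \,,\, b_{\,2} \,,\, \cdots \,,\, b_{\,n}\,) \,=\, \left\|\,x \,,\, b_{\,2} \,,\, \cdots \,,\, b_{\,n}\,\right\|$. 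For this single functional the quotient equals \,$\left\|\,x \,,\, b_{\,2} \,,\, \cdots \,,\, b_{\,n}\,\right\|$, so the supremum is at least \,$\left\|\,x \,,\, b_{\,2} \,,\, \cdots \,,\, b_{\,n}\,\right\|$, and combining the two inequalities gives the asserted equality.

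The only substantive ingredient is the existence of a norming functional, which is exactly Theorem~\ref{th8} (a direct application of the Hahn--Banach extension Theorem~\ref{th7}); once that is invoked the supremum is actually attained rather than merely approached. I expect the main point requiring care to be the degenerate case \,$\left\|\,x \,,\, b_{\,2} \,,\, \cdots \,,\, b_{\,n}\,\right\| \,=\, 0$, where Theorem~\ref{th8} does not apply and one must instead argue directly that every term of the supremum vanishes; handling this case keeps the statement correct for arbitrary \,$x$\, rather than only for \,$x$\, with \,$x,\, b_{\,2},\, \cdots,\, b_{\,n}$\, linearly independent.
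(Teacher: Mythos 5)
Your proof is correct and follows essentially the same route as the paper: the upper bound comes from the defining inequality $\left|\,T\,(\,x \,,\, b_{\,2} \,,\, \cdots \,,\, b_{\,n}\,)\,\right| \,\leq\, \|\,T\,\|\,\left\|\,x \,,\, b_{\,2} \,,\, \cdots \,,\, b_{\,n}\,\right\|$, and the lower bound from the norming functional supplied by Theorem~\ref{th8}. Your treatment of the degenerate case is in fact slightly more careful than the paper's, which only dismisses $x \,=\, \theta$ and overlooks the possibility that $\left\|\,x \,,\, b_{\,2} \,,\, \cdots \,,\, b_{\,n}\,\right\| \,=\, 0$ with $x \,\neq\, \theta$, where Theorem~\ref{th8} cannot be invoked as stated.
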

\begin{proof}
If \,$x\,=\,\theta$, there is nothing to prove.\;Let \;$x \,\neq\, \theta$\; be any element in \,$X$.\;By Theorem (\ref{th8}), \,$\exists\; \;T_{\,1} \,\in\, X^{\,\ast}_{F}$\; such that \,$T_{\,1}\,(\,x \,,\, b_{\,2} \,,\, \cdots \,,\, b_{\,n}\,) \,=\, \left\|\,x \,,\, b_{\,2} \,,\, \cdots \,,\, b_{\,n}\,\right\|$\, and \,$\|\,T_{\,1}\,\| \,=\, 1$.\;Therefore,
\[\sup\,\left\{\,\dfrac{|\,T\,(\,x \,,\, b_{\,2} \,,\, \cdots \,,\, b_{\,n}\,)\,|}{\|\,T\,\|} \;:\; T \,\in\, X^{\,\ast}_{F} \,,\, T \,\neq\, 0 \,\right\} \,\geq\, \dfrac{|\,T_{\,1}\,(\,x \,,\, b_{\,2} \,,\, \cdots \,,\, b_{\,n}\,)\,|}{\|\,T_{\,1}\,\|}\]
\begin{equation}\label{eq8}
\hspace{7.8cm} \,=\, \left\|\,x \,,\, b_{\,2} \,,\, \cdots \,,\, b_{\,n}\,\right\|
\end{equation}
On the other hand, 
\[\left|\,T\,(\,x \,,\, b_{\,2} \,,\, \cdots \,,\, b_{\,n}\,)\,\right| \,\leq\, \|\,T\,\|\, \left\|\,x \,,\, b_{\,2} \,,\, \cdots \,,\, b_{\,n}\,\right\|\; \;\forall\; T \,\in\, X^{\,\ast}_{F}\]
\begin{equation}\label{eq9}
\Rightarrow\; \sup\,\left\{\; \dfrac{|\,T\,(\,x \,,\, b_{\,2} \,,\, \cdots \,,\, b_{\,n}\,)\,|}{\|\,T\,\|} \,:\, T \,\in\, X^{\,\ast}_{F} \,,\, T \,\neq\, 0 \,\right\} \,\leq\, \left\|\,x \,,\, b_{\,2} \,,\, \cdots \,,\, b_{\,n}\,\right\|.
\end{equation}
From (\ref{eq8}) and (\ref{eq9}), we can write 
\[\left\|\,x \,,\, b_{\,2} \,,\, \cdots \,,\, b_{\,n}\,\right\| \,=\, \sup\,\left\{\, \dfrac{|\,T\,(\,x \,,\, b_{\,2} \,,\, \cdots \,,\, b_{\,n}\,)\,|}{\|\,T\,\|} \,:\, T \,\in\, X^{\,\ast}_{F} \;,\; T \,\neq\, 0 \,\right\}.\]
\end{proof} 

\begin{theorem}\label{th2}
Let \,$W$\, be a subspace of a real linear \,$n$-normed space \,$X$\, and \,$x_{\,1} \,\in\, X \,-\, W$\, such that \,$x_{\,1},\, b_{\,2},\, \cdots,\, b_{\,n}$\, are linearly independent and suppose that \,$h \,=\, \inf\limits_{x \,\in\, W}\,\left\|\,x_{\,1} \,-\, x \,,\, b_{\,2} \,,\, \cdots \,,\, b_{\,n}\,\right\| \,>\, 0$.\;Then \,$\exists$\, \,$T \,\in\, X^{\,\ast}_{F}$\, such that
\begin{itemize}
\item[(I)]\hspace{.1cm} $T\,\left(\,x_{\,1} \,,\, b_{\,2} \,,\, \cdots \,,\, b_{\,n}\,\right) \,=\, h$,  
\item[(II)]\hspace{.1cm} $T\,\left(\,x \,,\, b_{\,2} \,,\, \cdots \,,\, b_{\,n}\,\right) \,=\, 0\; \;\;\forall\; x \,\in\, W\;\; \;\text{and}\;\;  \;\|\,T\,\| \,=\, 1$.
\end{itemize} 
\end{theorem}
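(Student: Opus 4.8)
The plan is to reduce the statement to an application of the Hahn-Banach extension theorem (Theorem \ref{th7}) by first constructing an appropriate bounded $b$-linear functional on the enlarged subspace $W_{\,1} \,=\, W \,+\, \langle x_{\,1} \rangle \,=\, \{\,x \,+\, t\,x_{\,1} \,:\, x \,\in\, W,\; t \,\in\, \mathbb{R}\,\}$. Since $x_{\,1} \,\notin\, W$, every $y \,\in\, W_{\,1}$ admits a unique representation $y \,=\, x \,+\, t\,x_{\,1}$ with $x \,\in\, W$ and $t \,\in\, \mathbb{R}$: indeed, if $x \,+\, t\,x_{\,1} \,=\, x' \,+\, t'\,x_{\,1}$, then $(\,t \,-\, t'\,)\,x_{\,1} \,=\, x' \,-\, x \,\in\, W$ forces $t \,=\, t'$ (otherwise $x_{\,1} \,\in\, W$) and hence $x \,=\, x'$. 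This uniqueness lets me define, without ambiguity, a functional
\[ T_{\,W}\,(\,y \,,\, b_{\,2} \,,\, \cdots \,,\, b_{\,n}\,) \,=\, t\,h, \qquad y \,=\, x \,+\, t\,x_{\,1} \,\in\, W_{\,1}. \]

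First I would verify that $T_{\,W}$ is a $b$-linear functional on $W_{\,1} \,\times\, \langle b_{\,2} \rangle \,\times\, \cdots \,\times\, \langle b_{\,n} \rangle$: additivity and homogeneity in the first slot follow at once from the uniqueness of the decomposition, since the coefficient $t$ depends linearly on $y$. Properties (I) and (II) are then immediate at this preliminary level, namely taking $x \,=\, 0,\, t \,=\, 1$ gives $T_{\,W}\,(\,x_{\,1} \,,\, b_{\,2} \,,\, \cdots \,,\, b_{\,n}\,) \,=\, h$, while taking $t \,=\, 0$ gives $T_{\,W}\,(\,x \,,\, b_{\,2} \,,\, \cdots \,,\, b_{\,n}\,) \,=\, 0$ for every $x \,\in\, W$.

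The crux of the argument — and the step I expect to be the main obstacle — is showing that $T_{\,W}$ is bounded with $\|\,T_{\,W}\,\| \,=\, 1$. For the upper bound, when $t \,\neq\, 0$ I would use homogeneity of the $n$-norm together with $-\,x/t \,\in\, W$ to write
\[ \|\,y \,,\, b_{\,2} \,,\, \cdots \,,\, b_{\,n}\,\| \,=\, |\,t\,|\;\|\,x_{\,1} \,-\, (\,-\,x/t\,) \,,\, b_{\,2} \,,\, \cdots \,,\, b_{\,n}\,\| \,\geq\, |\,t\,|\;h, \]
so that $|\,T_{\,W}\,(\,y \,,\, b_{\,2} \,,\, \cdots \,,\, b_{\,n}\,)\,| \,=\, |\,t\,|\,h \,\leq\, \|\,y \,,\, b_{\,2} \,,\, \cdots \,,\, b_{\,n}\,\|$; the case $t \,=\, 0$ is trivial, giving $\|\,T_{\,W}\,\| \,\leq\, 1$. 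For the reverse inequality I would choose a minimizing sequence $\{\,w_{\,k}\,\} \,\subseteq\, W$ with $\|\,x_{\,1} \,-\, w_{\,k} \,,\, b_{\,2} \,,\, \cdots \,,\, b_{\,n}\,\| \,\to\, h$; then $y_{\,k} \,=\, x_{\,1} \,-\, w_{\,k} \,\in\, W_{\,1}$ satisfies $T_{\,W}\,(\,y_{\,k} \,,\, b_{\,2} \,,\, \cdots \,,\, b_{\,n}\,) \,=\, h$ while $\|\,y_{\,k} \,,\, b_{\,2} \,,\, \cdots \,,\, b_{\,n}\,\| \,\to\, h$, whence $\|\,T_{\,W}\,\| \,\geq\, h \,/\, \|\,y_{\,k} \,,\, b_{\,2} \,,\, \cdots \,,\, b_{\,n}\,\| \,\to\, 1$. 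Combining the two bounds yields $\|\,T_{\,W}\,\| \,=\, 1$; note that the hypothesis $h \,>\, 0$ is precisely what guarantees this functional is nonzero and that the quotient behaves well in the limit.

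Finally, I would invoke the Hahn-Banach theorem (Theorem \ref{th7}) to extend $T_{\,W}$ from $W_{\,1} \,\times\, \langle b_{\,2} \rangle \,\times\, \cdots \,\times\, \langle b_{\,n} \rangle$ to a bounded $b$-linear functional $T$ on all of $X \,\times\, \langle b_{\,2} \rangle \,\times\, \cdots \,\times\, \langle b_{\,n} \rangle$ with preservation of norm, so that $\|\,T\,\| \,=\, \|\,T_{\,W}\,\| \,=\, 1$ and $T$ agrees with $T_{\,W}$ on $W_{\,1}$. In particular $T\,(\,x_{\,1} \,,\, b_{\,2} \,,\, \cdots \,,\, b_{\,n}\,) \,=\, h$ and $T\,(\,x \,,\, b_{\,2} \,,\, \cdots \,,\, b_{\,n}\,) \,=\, 0$ for all $x \,\in\, W$, which is exactly (I) and (II). The linear independence of $x_{\,1},\, b_{\,2},\, \cdots,\, b_{\,n}$ ensures $\|\,x_{\,1} \,,\, b_{\,2} \,,\, \cdots \,,\, b_{\,n}\,\| \,\neq\, 0$ and is consistent with $h \,>\, 0$; the real substance of the proof lives entirely in the norm computation for $T_{\,W}$, after which the extension is automatic.
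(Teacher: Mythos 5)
Your proposal is correct and follows essentially the same route as the paper: the same subspace $W_{\,1} \,=\, W \,+\, \left<\,x_{\,1}\,\right>$, the same functional $y \,=\, x \,+\, t\,x_{\,1} \,\mapsto\, t\,h$, the same lower bound $\|\,y \,,\, b_{\,2} \,,\, \cdots \,,\, b_{\,n}\,\| \,\geq\, |\,t\,|\,h$ via $-\,x/t \,\in\, W$, and the same extension by Theorem \ref{th7}. The only cosmetic difference is that you establish $\|\,T_{\,W}\,\| \,\geq\, 1$ with a minimizing sequence while the paper uses an $\epsilon$-argument with a normalized element; these are interchangeable.
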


\begin{proof}
Let \,$W_{\,1} \,=\, W \,+\, \left<\,x_{\,1}\,\right>$\; be the space spannded by \,$W$\, and \,$x_{\,1}$.\;Since \,$h \,>\, 0 $, we have \,$x_{\,1} \,\not\in W$.\;Therefore each \,$y \,\in\, W_{1}$\, can be expressed uniquely in the form \,$y \,=\, \alpha\,x_{\,1} \,+\, x,\; x \,\in\, W$\, and \,$\alpha \,\in\, \mathbb{R}$.\;We define a functional as follows: 
\[T_{\,1} \,:\, W_{1} \,\times\, \left<\,b_{\,2}\,\right> \,\times \cdots \,\times\, \left<\,b_{\,n}\,\right> \,\to\, \mathbb{R},\; T_{\,1}\,\left(\,\alpha\,x_{\,1} \,+\, x \,,\, b_{\,2} \,,\, \cdots \,,\, b_{\,n}\,\right) \,=\, \alpha\,h.\]Then clearly \,$T_{\,1}$\, is a \,$b$-linear functional on \,$W_{1} \,\times\, \left<\,b_{\,2}\,\right> \,\times \cdots \,\times\, \left<\,b_{\,n}\,\right>$\, satisfying 
\[T_{\,1}\,\left(\,x \,,\, b_{\,2} \,,\, \cdots \,,\, b_{\,n}\,\right) \,=\, 0\; \;\;\forall\; x \,\in\, W\; \,\;\text{and}\,\; \,T_{\,1}\,\left(\,x_{\,1} \,,\, b_{\,2} \,,\, \cdots \,,\, b_{\,n}\,\right) \,=\, h.\]
If \,$\alpha \,\neq\, 0$, then
\[\left\|\,y \,,\, b_{\,2} \,,\, \cdots \,,\, b_{\,n}\,\right\| \,=\, \left\|\,\alpha\,x_{\,1} \,+\, x \,,\, b_{\,2} \,,\, \cdots \,,\, b_{\,n}\,\right\| \,=\, \left\| \,-\, \alpha\,\left(\,-\, \dfrac{x}{\alpha} \,-\, x_{\,1}\,\right) \,,\, b_{\,2} \,,\, \cdots \,,\, b_{\,n}\,\right\|\]
\[\geq\, |\,\alpha\,|\,h\; \left[\;\text{since $\,-\, \dfrac{x}{\alpha} \,\in\, W$}\,\right].\hspace{2.8cm}\]
Thus \,$\left|\,T_{\,1}\,\left(\,y \,,\, b_{\,2} \,,\, \cdots \,,\, b_{\,n}\,\right)\,\right| \,=\, |\,\alpha\,|\,h \,\leq\, \left\|\,y \,,\, b_{\,2} \,,\, \cdots \,,\, b_{\,n}\,\right\|$\, if \,$\alpha \,\neq\, 0$.\;The above inequality is obviously true if \,$\alpha \,=\, 0$.\;Therefore \,$\|\,T_{\,1}\,\| \,\leq\, 1$\, and it shows that \,$T_{\,1}$\, is a bounded \,$b$-linear functional defined on \,$W_{1} \,\times\, \left<\,b_{\,2}\,\right> \,\times \cdots \,\times\, \left<\,b_{\,n}\,\right>$.\\
On the other hand, if \,$\epsilon \,>\, 0$\, there exist \,$x \,\in\, W$\, such that \,$\left\|\,x_{\,1} \,-\, x \,,\, b_{\,2} \,,\, \cdots \,,\, b_{\,n}\,\right\| \,<\, h \,+\, \epsilon$.\;Let  \,$y \,=\, \dfrac{x \,-\, x_{\,1}}{\left\|\,x \,-\, x_{\,1} \,,\, b_{\,2} \,,\, \cdots \,,\, b_{\,n}\,\right\|}$\, for \,$x \,\in\, W$.\;Then \,$\left\|\,y \,,\, b_{\,2} \,,\, \cdots \,,\, b_{\,n}\,\right\| \,=\, 1$\, and \,$y \,\in\, W_{\,1}$.\;Furthermore,
\[\left|\,T_{\,1}\,\left(\,y,\, b_{\,2},\, \cdots,\, b_{\,n}\,\right)\,\right| \,=\, \dfrac{h}{\left\|\,x \,-\, x_{\,1},\, b_{\,2} ,\, \cdots,\, b_{\,n}\,\right\|} \,>\, \dfrac{h}{h \,+\, \epsilon} \,=\, \dfrac{h}{h \,+\, \epsilon}\,\left\|\,y,\, b_{\,2},\, \cdots,\, b_{\,n}\,\right\|.\]
Therefore \,$\|\,T_{\,1}\,\| \,>\, \dfrac{h}{h \,+\, \epsilon}$.\;Since \,$\epsilon \,>\, 0$\, is arbitrary and \,$h \,\neq\, 0$, we obtain \,$\|\,T_{\,1}\,\| \,\geq\, 1$\, and hence \,$\|\,T_{\,1}\,\| \,=\, 1$.\;Now, by Theorem (\ref{th7}), we extend \,$T_{\,1}$\, to \,$T \,\in\, X^{\,\ast}_{F}$\, such that
\[T\,\left(\,x \,,\, b_{\,2} \,,\, \cdots \,,\, b_{\,n}\,\right) \,=\, 0\; \;\;\forall\; x \,\in\, W,\, \,T\,\left(\,x_{\,1} \,,\, b_{\,2} \,,\, \cdots \,,\, b_{\,n}\,\right) \,=\, h\; \;\text{and}\; \,\|\,T\,\| \,=\, 1.\]  
\end{proof}

\begin{definition}
Let \,$S$\, be a non empty subset of \,$X$.\;Then the \,$b$-annihilator of \,$S$\, is denoted by \,$S^{\,a}_{F}$\, and it is defined as:
\[S^{\,a}_{F} \,=\, \left\{\,T \,\in\, X^{\,\ast}_{F} \;|\; T\,\left(\,x \,,\, b_{\,2} \,,\, \cdots \,,\, b_{\,n}\,\right) \,=\, 0\; \;\forall\; x \,\in\, S\,\right\}.\]
A subset \,$S^{\,\theta}_{F}$\, of \,$S^{\,a}_{F}$\, is defined as follows:
\[S^{\,\theta}_{F} \,=\, \left\{\,T \,\in\, S^{\,a}_{F} \,:\, \|\,T\,\| \,\leq\, 1\,\right\}.\]  
\end{definition}

\begin{theorem}
Let \,$X$\, be a real linear \,$n$-normed space and \,$S$\, be a subspace of \,$X$\, and let \,$x \,\in\, X$\, such that \,$x,\, b_{\,2},\, \cdots,\, b_{\,n}$\, are linearly independent.\;Then
\[\inf\left\{\,\left\|\,x \,-\, s \,,\, b_{\,2},\, \cdots,\, b_{\,n}\,\right\| \,:\, s \,\in\, S\,\right\} \,=\, \sup\left\{\,T\,\left(\,x \,,\, b_{\,2} \,,\, \cdots \,,\, b_{\,n}\,\right) \,:\, T \,\in\, S^{\,\theta}_{F}\,\right\}.\]
\end{theorem}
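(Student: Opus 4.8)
The plan is to prove the claimed identity by establishing the two inequalities separately. Write $d \,=\, \inf\left\{\,\left\|\,x \,-\, s \,,\, b_{\,2} \,,\, \cdots \,,\, b_{\,n}\,\right\| \,:\, s \,\in\, S\,\right\}$ for the left-hand side and $M \,=\, \sup\left\{\,T\,(\,x \,,\, b_{\,2} \,,\, \cdots \,,\, b_{\,n}\,) \,:\, T \,\in\, S^{\,\theta}_{F}\,\right\}$ for the right-hand side. The inequality $M \,\leq\, d$ is the routine half and falls straight out of the definitions, whereas the reverse inequality $d \,\leq\, M$ carries the real content and will be supplied by Theorem \ref{th2}.

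For the first inequality I would take an arbitrary $T \,\in\, S^{\,\theta}_{F}$. By definition $\|\,T\,\| \,\leq\, 1$ and $T\,(\,s \,,\, b_{\,2} \,,\, \cdots \,,\, b_{\,n}\,) \,=\, 0$ for every $s \,\in\, S$. Hence, for each $s \,\in\, S$, linearity gives $T\,(\,x \,,\, b_{\,2} \,,\, \cdots \,,\, b_{\,n}\,) \,=\, T\,(\,x \,-\, s \,,\, b_{\,2} \,,\, \cdots \,,\, b_{\,n}\,)$, and the standard boundedness estimate $\left|\,T\,(\,\cdot\,)\,\right| \,\leq\, \|\,T\,\|\,\|\,\cdot\,\|$ yields $T\,(\,x \,,\, b_{\,2} \,,\, \cdots \,,\, b_{\,n}\,) \,\leq\, \|\,T\,\|\,\left\|\,x \,-\, s \,,\, b_{\,2} \,,\, \cdots \,,\, b_{\,n}\,\right\| \,\leq\, \left\|\,x \,-\, s \,,\, b_{\,2} \,,\, \cdots \,,\, b_{\,n}\,\right\|$. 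Taking the infimum over $s \,\in\, S$ gives $T\,(\,x \,,\, b_{\,2} \,,\, \cdots \,,\, b_{\,n}\,) \,\leq\, d$, and then taking the supremum over all $T \,\in\, S^{\,\theta}_{F}$ produces $M \,\leq\, d$.

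For the reverse inequality I would first dispose of the degenerate case $d \,=\, 0$: the zero functional lies in $S^{\,\theta}_{F}$, so $M \,\geq\, 0 \,=\, d$ and equality already holds. When $d \,>\, 0$, observe that $x \,\notin\, S$ (otherwise the choice $s \,=\, x$ would force $d \,=\, 0$), and the hypothesis guarantees that $x,\, b_{\,2},\, \cdots,\, b_{\,n}$ are linearly independent. Thus the data $(S,\, x,\, d)$ meet precisely the hypotheses of Theorem \ref{th2} with $W \,=\, S$, $x_{\,1} \,=\, x$ and $h \,=\, d$. That theorem then furnishes a functional $T \,\in\, X^{\,\ast}_{F}$ with $T\,(\,x \,,\, b_{\,2} \,,\, \cdots \,,\, b_{\,n}\,) \,=\, d$, with $T\,(\,s \,,\, b_{\,2} \,,\, \cdots \,,\, b_{\,n}\,) \,=\, 0$ for all $s \,\in\, S$, and with $\|\,T\,\| \,=\, 1$. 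This $T$ annihilates $S$ and has norm $1 \,\leq\, 1$, so $T \,\in\, S^{\,\theta}_{F}$, whence $M \,\geq\, T\,(\,x \,,\, b_{\,2} \,,\, \cdots \,,\, b_{\,n}\,) \,=\, d$. Combining this with $M \,\leq\, d$ gives the asserted equality.

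The main obstacle is essentially just recognizing that Theorem \ref{th2} is tailor-made to produce the extremal norming functional; once the distance $d$ is identified with the quantity $h$ appearing there, the genuinely hard construction (the Hahn-Banach extension of Theorem \ref{th7}) has already been carried out inside that theorem. The only point demanding care is the separate treatment of $d \,=\, 0$, since Theorem \ref{th2} requires $h \,>\, 0$.
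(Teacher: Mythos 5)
Your proof is correct, and its skeleton (split into $M\le d$ and $d\le M$, with Hahn--Banach supplying the hard half) matches the paper's. The one genuine difference is in how the extremal functional is produced: the paper does not cite Theorem \ref{th2} at all, but instead repeats its construction inline --- it forms the subspace $W$ spanned by $x$ and $S$, defines $T(\lambda x + s, b_{2},\cdots,b_{n}) = \lambda d$, computes $\|T\|=1$ by the same infimum manipulation that appears in the proof of Theorem \ref{th2}, and only then invokes the extension Theorem \ref{th7}. Your observation that Theorem \ref{th2} with $W=S$, $x_{1}=x$, $h=d$ already delivers exactly the required $T\in S^{\,\theta}_{F}$ is a clean shortcut that eliminates this duplication. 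You are also more careful at the degenerate end: the paper only dismisses the case $x\in S$ as immediate and then tacitly assumes $d>0$ (its norm computation divides by $d$), whereas your case split on $d=0$ versus $d>0$, using the zero functional when $d=0$, covers the possibility that $x\notin S$ but $x$ lies in the closure of $S$, which the paper's argument does not explicitly address. Both routes rest on the same Hahn--Banach machinery, so nothing essential is gained or lost, but your version is the tidier and slightly more complete one.
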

\begin{proof}
If \,$x \,\in\, S$\, then the proof follows immediately.\;Therefore we suppose \,$x \,\in\, X \,-\, S$\, such that \,$x,\, b_{\,2},\, \cdots,\, b_{\,n}$\, are linearly independent.\\
Let \,$d \,=\, \inf\left\{\,\left\|\,x \,-\, s \,,\, b_{\,2},\, \cdots,\, b_{\,n}\,\right\| \,:\, s \,\in\, S\,\right\}$\, and let \,$\left\{\,x_{\,k}\,\right\}$\, be a sequence in \,$S$\, such that \,$\left\|\,x \,-\, x_{\,k} \,,\, b_{\,2},\, \cdots,\, b_{\,n}\,\right\| \,\to\, d$\, as \,$k \,\to\, \infty$.\;If \,$T \,\in\, S^{\,\theta}_{F}$, for each \,$k$, 
\[T\,\left(\,x \,,\, b_{\,2} \,,\, \cdots \,,\, b_{\,n}\,\right) \,=\, T\,\left(\,x \,,\, b_{\,2} \,,\, \cdots \,,\, b_{\,n}\,\right) \,-\, T\,\left(\,x_{\,k} \,,\, b_{\,2} \,,\, \cdots \,,\, b_{\,n}\,\right)\]
\[\hspace{2.8cm}=\, T\,\left(\,x \,-\, x_{\,k} \,,\, b_{\,2} \,,\, \cdots \,,\, b_{\,n}\,\right) \,\leq\, \|\,T\,\|\,\left\|\,x \,-\, x_{\,k} \,,\, b_{\,2},\, \cdots,\, b_{\,n}\,\right\|\]
\[\leq\, \left\|\,x \,-\, x_{\,k} \,,\, b_{\,2},\, \cdots,\, b_{\,n}\,\right\| \,\to\, d\; \;\text{as}\; \;k \,\to\, \infty\]
and so \,$T\,\left(\,x \,,\, b_{\,2} \,,\, \cdots \,,\, b_{\,n}\,\right) \,\leq\, d$.\;Therefore, it follows that
\[\sup\left\{\,T\,\left(\,x \,,\, b_{\,2} \,,\, \cdots \,,\, b_{\,n}\,\right) \,:\, T \,\in\, S^{\,\theta}_{F}\,\right\} \,\leq\, d.\]
The proof will be finished if we find an \,$T_{\,1} \,\in\, S^{\,\theta}_{F}$\, with \,$T_{\,1}\,\left(\,x \,,\, b_{\,2} \,,\, \cdots \,,\, b_{\,n}\,\right) \,=\, d$.\\
Let \,$W$\, be a subspace of \,$X$\, generated by \,$x$\, and \,$S$.\;Then it can be easily verified that every elements of \,$W$\, is uniquely representable in the form \,$\lambda\,x \,+\, s$, where \,$\lambda$\, is real and \,$s \,\in\, S$.\;Define 
\[T \,:\, W \,\times\, \left<\,b_{\,2}\,\right> \,\times \cdots \,\times\, \left<\,b_{\,n}\,\right> \,\to\, \mathbb{R},\; T\,\left(\, \lambda\,x \,+\, s \,,\, b_{\,2} \,,\, \cdots \,,\, b_{\,n}\,\right) \,=\, \lambda\,d.\]
Clearly, \,$T$\, is a \,$b$-linear and
\[\|\,T\,\| \,=\, \sup\left\{\,\dfrac{|\,\lambda\,|\,d}{\left\|\, \lambda\,x \,+\, s \,,\, b_{\,2} \,,\, \cdots \,,\, b_{\,n}\,\right\|} \,,\, \lambda\,x \,+\, s \,\neq\, \theta\,\right\}\]
\[\hspace{1cm}\,=\, \sup\left\{\,\dfrac{|\,\lambda\,|\,d}{|\,\lambda\,|\,\left\|\,x \,+\, \dfrac{s}{\lambda} \,,\, b_{\,2} \,,\, \cdots \,,\, b_{\,n}\,\right\|} \,,\, \lambda\,x \,+\, s \,\neq\, \theta\,\right\}\]
\[=\, \dfrac{d}{\inf\left\{\,\left\|\,x \,-\, \left(\,-\, \dfrac{s}{\lambda}\,\right) \,,\, b_{\,2},\, \cdots,\, b_{\,n}\,\right\| \,:\, s \,\in\, S\,\right\}}\]
\[\hspace{.1cm}=\, \dfrac{d}{d} \,=\, 1\; \;[\;\text{since $S$ is a subspace, $\left(\,-\, \dfrac{s}{\lambda}\,\right) \,\in\, S$}\;].\]
So, \,$T$\, is a bounded \,$b$-linear functional defined on \,$W \,\times\, \left<\,b_{\,2}\,\right> \,\times \cdots \,\times\, \left<\,b_{\,n}\,\right>$\, with \,$\|\,T\,\| \,=\, 1$.\;By Theorem (\ref{th7}), we extend \,$T$\, to an \,$T_{\,1} \,\in\, X^{\,\ast}_{F}$.\;This gives \,$\|\,T_{\,1}\,\| \,=\, 1,\, \; T_{\,1}\,\left(\,x \,,\, b_{\,2} \,,\, \cdots \,,\, b_{\,n}\,\right) \,=\, d$\, and for any \,$s \,\in\, S,\, \;T_{\,1}\,\left(\,s \,,\, b_{\,2} \,,\, \cdots \,,\, b_{\,n}\,\right) \,=\, 0$.\;Therefore \,$T_{\,1} \,\in\, S^{\,\theta}_{F}$\, and \,$T_{\,1}\,\left(\,x \,,\, b_{\,2} \,,\, \cdots \,,\, b_{\,n}\,\right) \,=\, d$\, and the proof of this theorem is complete.
\end{proof}

\subsection{Compliance with Ethical Standards:}

\smallskip\hspace{.6 cm}{\bf Fund:} There are no funding sources.\\

{\bf Conflict of Interest:} First Author declares that he has no conflict of interest.\,Second Author declares that he has no conflict of interest.\\

{\bf Ethical approval:} This article does not contain any studies with human participants performed by any of the authors.

\end{document}